\newcommand{\bn}{\mathbb N}
\newcommand{\br}{\mathbb R}
\newcommand{\nc}{\newcommand}
\newtheorem{lemma}{Lemma}[section]
\newtheorem{theorem}{Theorem}[section]
\newtheorem{definition}{Definition}[section]
\newtheorem{corollary}{Corollary}[section]
\newtheorem{remark}{Remark}[section]
\nc{\cD}{{\cal D}}
\nc{\cP}{{\cal P}}
\nc{\cR}{{\cal R}}
\nc{\e}{\varepsilon}
\nc{\Om}{\Omega}
\nc{\om}{\omega}
\nc{\aal}{\alpha}
\nc{\tow}{\rightharpoonup}
\nc{\nin}{\in \hs{-.35}/\,}
\nc{\np}{\newpage}
\nc{\g}{\gamma}
\nc{\IN}{I \hs{-.15} N}
\nc{\IR}{I \hs{-.14} R}
\nc{\IK}{I \hs{-.14} K}
\nc{\hs}[1]{\hspace{#1cm}}
\def\C#1{{\mathcal {#1}}}
\title{\bf Global attractors for multivalued semiflows with weak continuity properties}
\author{Piotr Kalita
    \thanks{E-mail : piotr.kalita@ii.uj.edu.pl}
    \thanks{The research was supported by the Marie Curie International Research Staff
Exchange Scheme Fellowship within the 7th European Community Framework
Programme under Grant Agreement No. 295118, by the National Science
Center of Poland under grant no. N N201 604640, and by the International
Project co-financed by the Ministry of Science and Higher Education of
Republic of Poland under grant no. W111/7.PR/2012.},
Grzegorz {\L}ukaszewicz
    \thanks{E-mail : glukasz@mimuw.edu.pl, Tel.: +48 22 55 44 562}
\thanks{This research was supported by Polish Government Grant N N201 547638} }
\date{ }
\begin{document}
\maketitle
\begin{center}

 {\small
$^{\star\dag}$Faculty of Mathematics and Computer Science,
Institute of Computer Science,
Jagiellonian University,
ul. prof. S. {\L}ojasiewicza 6, 30-348 Krak\'ow, Poland,

$^{\ddag\S}$University of Warsaw, Mathematics Department,
     ul.Banacha 2, 02-957 Warsaw, Poland}

\end{center}
\normalsize

\begin{abstract}
\noindent A method is proposed to prove the global attractor existence for multivalued semiflows with weak continuity properties.
An application to the reaction-diffusion problems with nonmonotone multivalued semilinear boundary condition and nonmonotone multivalued semilinear source term is presented.
\end{abstract}

\vspace{0.2cm}

\noindent{\bf Keywords:} {multivalued semiflow, attractor, global weak solution, weak topology, hemivariational inequality, reaction-diffusion problem}

\vspace{0.2cm} \noindent{\it 1991 Mathematics Subject
Classification:} 76D05, 76F10, 76F20, 47J20, 49J40 \vspace{0.2cm}
 \renewcommand{\theequation}{\arabic{section}.\arabic{equation}}
 \setcounter{equation}{0}

  \section{Introduction}
There are at least three approaches to prove the existence of global attractor for the problems without uniqueness of solutions:
 method of multivalued semigroups or multivalued semiflows developed in the ground-breaking paper od Babin and Vishik \cite{babin1985} (see also \cite{Melnik-1998}),
 method of generalized semiflows (see \cite{Ball-1997}), and method of trajectory attractors (see \cite{Chepyzhov1995}, \cite{Chepyzhov1997}, \cite{malek-necas-1996}, \cite{Sell}).
 Method of trajectory attractors that was related to the other two ones in \cite{Kapustyan2010} relies on the study of shift operators on the sets of time dependent trajectories while the other two approaches, which are discussed in relation to each other in \cite{Caraballo-2003}, consist in the direct study of the sets of states obtainable from the given initial conditions after some period of time. This mapping, known as multivalued semigroup or multivalued semiflow ($m$-semiflow) is denoted as $\br^+\times H \ni (t,x)\to G(t,x)\subset 2^H$, where $H$ is the suitable
 Banach (or metric) space of the problem states. In order to show the existence of a compact global attractor, i.e. the compact set in $H$ that is invariant (or sometimes only negatively semiinvariant) and attracts all bounded sets in $H$ three properties are required: existence of a set that is bounded in $H$ and absorbs all trajectories of $G$ after some finite time, some compactness type property of $G$ and some continuity or closedness type property of $m$-semiflow $x\to G(t,x)$.
 Clasically, this last property is the upper semicontinuity (in the sense of multifunctions) whith respect to the strong topology in the argument space and strong topology in the value space (see \cite{Ball-1997}, \cite{Melnik-1998}). 
 In the work of Zhong, Yang and Sun \cite{Zhong-Yang-Sun} the approach to show existence of a global attractor for problems
 governed by semiflows (i.e. problems with the uniqueness of solutions) which are only strong-weak continuous is presented. This approach was futher
 developed in \cite{Wang-Zhou-2007} where some results for nonautonomous strict $m$-semiflows are shown.

The present paper is on one hand the extension of the results of \cite{Ball-1997} and \cite{Melnik-1998} since the condition of
 semiflow upper semicontinuity is relaxed to the condition called ($NW$) in the sequel and on the other hand the extension of works
 \cite{Zhong-Yang-Sun} and \cite{Wang-Zhou-2007} to a more general, multivalued case. The motivation for the introduction of this condition is twofold: firstly, as it is shown in Lemmata \ref{lemma:nw_1} and \ref{lemma:ex3_NW_H} below, it is natural to verify for the problems with multifunctions having the form of Clarke subdifferential since it follows from basic a priori estimates and passing to the limit argument; secondly it can replace the strong - strong upper semicontinuity and graph closedness in the abstract theorems on the attractor existence even if  an $m$-semiflow is only point dissipative and nonstrict (see Corollary \ref{corollary:main} and Theorem \ref{thm:pointd} below).

 Note that in \cite{Wang-Zhou-2007} the extension of the approach of \cite{Zhong-Yang-Sun} to the case of $m$-semiflows
 that are strict (i.e. such that $G(t+s,x) = G(t,G(s,x))$ for all $x\in H$ and $s,t\in \mathbb{R}^+$) is proposed,
 while in the present study we consider the case where only the inclusion $G(t+s,x)\subset G(t,G(s,x))$ is assumed to hold.
 Moreover we propose another generalization of strong-weak continuity than \cite{Wang-Zhou-2007}, namely for $x_n\to x$ strongly in $H$
 and $\xi_n\in G(t,x_n)$ we assume that $\xi_n$ must have a weakly convergent subsequence while in \cite{Wang-Zhou-2007} it is assumed that the whole sequence $\xi_n$ must converge.

Attractors for partial differential equations and inclusions without uniqueness were studied in the recent articles of Kasyanov \cite{Kasyanov2011}, \cite{Kasyanov2012} where the approaches by $m$-semiflows and trajectory attractors were used for first order autonomous evolution equations and inclusions with general nonlinear pseudomonotone operators. The results were adapted to second order evolution inclusions and hemivariational inequalities in \cite{Kasyanov2012b}, \cite{Zgurovsky2012a}. Note, that in \cite{Kasyanov2011}, \cite{Kasyanov2012}, \cite{Kasyanov2012b}, \cite{Zgurovsky2012a} the strong-strong uppersemicontinuity of $m$-semiflow is always used and the compactness is proved by the analysis of the energy function monotonicity.

Another interesting recent article on existence of global attractors for $m$-semiflows is the paper of Coti Zelati \cite{coti-zelati},
 where only the strict case is considered and the semiflow closedness
 is assumed to hold only at some time instant $t^*>0$ and not for all $t\geq 0$.

Examples presented in the present study show that the condition ($NW$), that states that the multivalued semiflow has weakly compact values and is strong-weak upper semicontinuous, is natural to check for the problems governed by differential inclusions where the multivalued term has the form of Clarke subdifferential.

 For an exhaustive review of recent results on the theory of asymptotic behavior for
 problems without uniqueness of solutions see \cite{Balibrea-2012}. The difficulty
 in the analysis of these problems lies in the fact that it remains unknown if
 every solution can be obtained as the limit of the solutions of approximative problems (for example Galerkin problems)
 and, in consequence, the estimates that hold for the approximate solutions do not have to hold for all
 solutions of the original problem (see Section 4.3.1 in \cite{Balibrea-2012}). It must be remarked here that
 while most authors consider only the existence of attractors for the multivalued semiflows, there are almost no results
 on the attractor properties, like their dimension, the attraction speed or the attractor structure. The notable exceptions are the article of Arrieta et. al. \cite{Arrieta-2006} where for one dimensional nonlinear reaction-diffusion problem it is shown that attractor consists of heteroclinic connections between a countable number of fixed points, the article of Kapustyan et. al. \cite{Kapustyan2012} where the characterization of an attractor for the problem governed by the nonlinear reaction-diffusion equation by means of stable and unstable manifolds of the rest points is given and the article of Kasyanov et al. \cite{Kasyanov2013} where the regularity of all weak solutions and their attractors for reaction-diffusion type evolution inclusions were studied.
 
The plan of this article is the following: in Section \ref{sec:theory} we present abstract results on the attractor existence while in Section \ref{sec:applications} we present examples of the problems for which we show the attractor existence by means of proposed abstract framework.

 \section{Abstract theory of global attractors for multivalued semiflows with a weak continuity property}\label{sec:theory}

 Let $H$ be a Banach space, and $P(H)$ be the family of all nonempty subsets of $H$. Some definitions and results of this section remain valid for more general setup of metric spaces and in such cases it will be explicitly noted that $H$ is only a metric space. By $B(x,r)$ we will denote the closed ball centered in $x\in H$ with the radius $r\in\mathbb{R}^+$. Note that here, and in the sequel of this paper we denote $\br^+=[0,\infty)$.

If $H$ is a metric space equipped with the metric $\rho(\cdot,\cdot)$, then for $x\in H$ and $B\subset H$, we set $\mbox{dist}_H(x,B)=\inf_{y\in B}\rho(x,y)$. Moreover if $A,B\subset H$ then we define the Haussdorff semidistance from $A$ to $B$ by $\mbox{dist}_H(A,B)=\sup_{x\in A}\mbox{dist}_H(x,B)$. Same definitions are valid for normed spaces with $\rho(x,y)$ replaced by $\|x-y\|$.

\begin{definition}
The map $G: \br^+\times H\to P(H)$ is called a multivalued semiflow ($m$-semiflow) if:

(1) $G(0,z)=z$ for all $z\in H$.

(2) $G(t+s,z) \subset G(t,G(s,z))$ for all $z\in H$ and all $t,s\geq 0$.

\end{definition}

\begin{definition}
An $m$-semiflow is strict if $G(t+s,z) = G(t,G(s,z))$ for all $z\in H$ and all $t,s\geq 0$.
\end{definition}
\subsection{Measure of noncompactness and its properties}
We recall the definition and some properties of the Kuratowski measure of noncompactness, cf. \cite{deimling-1985}.

\begin{definition}
Let $H$ be a complete metric space and $A$ be a bounded subset of $H$.
The Kuratowski measure of noncompactness $\kappa(A)$ of $A$ is defined by
$$\kappa(A) = \inf\{\delta > 0: A\,\, \mathrm{has\,\, a\,\, finite\,\, open \,\,cover\,\, of \,\,sets\,\, of \,\,diameter\,\,} < \delta\}.$$
If $A$ is a nonempty, unbounded set in $H$, then we define $\kappa(A)=\infty$.
\end{definition}

\begin{lemma} \label{Kuratowski-prop}
The Kuratowski measure of noncompactness $\kappa(A)$ on a complete metric
space $H$ satisfies the following properties:
\begin{itemize}
\item[(1)] $\kappa(A) = 0$ if and only if $\bar{A}$ is compact, where $\bar{A}$ is the closure of $A$;

\item[(2)] If $A_1 \subset A_2$, then $\kappa(A_1) \leq \kappa(A_2)$;

\item[(3)] $\kappa(A_1 \cup A_2) \leq \max\{\kappa(A_1), \kappa(A_2)\}$;

\item[(4)] $\kappa(\bar{A}) = \kappa(A)$;

\item[(5)] If $A_t$ is a family of nonempty, closed, bounded sets defined for $t>r$, where $r\in \mathbb{R}^+$, that satisfy
    $A_t \subset A_s$, whenever $s\leq t$, and $\kappa(A_t) \to 0$, as $t\to\infty$, then
    $\bigcap_{t>r}A_t$ is a nonempty, compact set in $H$.
\end{itemize}
\noindent If, in addition, $H$ is a Banach space, then the following are valid:
\begin{itemize}
\item[(6)] $\kappa(A_1 + A_2)\leq\kappa(A_1) + \kappa(A_2)$;

\item[(7)] $\kappa(co(A)) = \kappa(A)$, where $co(A)$ is the closed convex hull of $A$;

\item[(8)] Let $H$ have the decomposition
    $H = H_1 + H_2$, with $\mathrm{dim}\ H_1 < \infty$,
    $P : H \to H_1$, $Q : H \to H_2$ be the canonical projectors, and $A\subset H$ be bounded.
     If the diameter of $Q(A)$ is less then $\epsilon$, then $\kappa(A) < \epsilon$.
     \end{itemize}
\end{lemma}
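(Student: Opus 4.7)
The plan is to verify the eight properties in essentially the listed order, with the classical properties (1)--(4) being quick consequences of the definition, property (5) being the substantive ``Cantor-type'' statement that uses completeness, and properties (6)--(8) relying additionally on the linear structure.

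For (1), if $\overline{A}$ is compact then for every $\delta>0$ it admits a finite cover by balls of radius $\delta/3$, each of diameter $<\delta$; conversely, $\kappa(A)=0$ implies $A$ is totally bounded, hence so is $\overline{A}$, and completeness of $H$ promotes total boundedness to compactness. Monotonicity (2) follows because any cover of $A_2$ restricts to a cover of $A_1$, and (3) follows by taking the union of finite covers of $A_1$ and $A_2$ of diameter less than $\max\{\kappa(A_i)\}+\varepsilon$. For (4), the nontrivial direction uses that replacing each set in a finite cover by its closure preserves the diameter and extends the cover to $\overline{A}$; combined with monotonicity this gives equality.

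Property (5) is the crux and deserves the most care. I would prove that $\bigcap_{t>r}A_t$ is totally bounded by noting that for any $\varepsilon>0$ one may choose $t$ so large that $\kappa(A_t)<\varepsilon$, and then the finite cover of $A_t$ by sets of diameter $<\varepsilon$ also covers the intersection. Together with closedness this yields compactness once nonemptiness is shown. For nonemptiness I would pick $t_n\to\infty$ and $x_n\in A_{t_n}$, and using the fact that $\kappa(\{x_n\}_{n\geq N})\leq \kappa(A_{t_N})\to 0$ (together with (1)) extract a convergent subsequence $x_{n_k}\to x$; by monotonicity of the family, $x\in \overline{A_t}=A_t$ for every $t>r$. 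The delicate step, and the main obstacle, is justifying that $\kappa(\{x_n\}_{n\geq N})\to 0$ so that property (1) can be invoked; this is where completeness enters decisively, since otherwise the Cauchy subsequence need not converge.

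The Banach-space properties are then short. For (6), given $\varepsilon>0$, cover $A_i$ by finitely many sets $U^i_j$ of diameter less than $\kappa(A_i)+\varepsilon/2$; the Minkowski sums $U^1_j+U^2_k$ cover $A_1+A_2$ and have diameter at most the sum, and letting $\varepsilon\to 0$ gives the estimate. For (7) the bound $\kappa(A)\leq \kappa(\mathrm{co}(A))$ is monotonicity, and the reverse requires the standard argument (see Deimling \cite{deimling-1985}) reducing the convex hull of finitely many small-diameter sets to a finite-dimensional convex combination, combined with (6) applied iteratively. Finally (8) follows by writing $A\subset P(A)+Q(A)$: since $P(A)$ is bounded in the finite-dimensional space $H_1$ it is relatively compact, so $\kappa(P(A))=0$ by (1), while $Q(A)$ is a single set of diameter less than $\varepsilon$, hence $\kappa(Q(A))<\varepsilon$; applying (6) closes the argument.
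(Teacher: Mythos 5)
Your sketch is correct, but note that the paper itself offers no proof of this lemma: it is recalled verbatim from Deimling's book, so there is no argument of the paper's to compare yours against. What you write is the standard textbook treatment, and every step checks out. Two small remarks. First, in (4) the paper's definition of $\kappa$ asks for a finite \emph{open} cover, so after replacing the covering sets by their closures you should either observe that the infimum is unchanged if the openness requirement is dropped, or pass to open $\eta$-neighbourhoods of the closed sets for arbitrarily small $\eta>0$; this is routine but worth a sentence. Second, in (5) the step you single out as delicate, namely $\kappa(\{x_n\}_{n\geq N})\to 0$, is in fact immediate from monotonicity (2) since $\{x_n\}_{n\geq N}\subset A_{t_N}$; the only point needing care is to pass from this to $\kappa(\{x_n\}_{n\geq 1})=0$ via (3) and the vanishing of $\kappa$ on finite sets, exactly as the paper does later in its proof of Lemma \ref{lemma:limit-asymptotic}. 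Your deferral of the hard inclusion $\kappa(co(A))\leq\kappa(A)$ in (7) to the literature is acceptable here, both because the paper does the same for the entire lemma and because (7) is never invoked in the rest of the paper; (8) is fine provided one reads ``canonical projectors'' as bounded projectors, which is also what the paper's flattening condition presupposes.
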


\begin{definition} \label{def-ws-cl}
 Let $A$ be a subset of Banach space $H$. The weak sequential closure
  $\bar{A}^{ws}$ of $A$ is defined by
$$\bar{A}^{ws} = \{x \in H: \,\,\mbox{there exists the sequence}\,\, \{x_n\} \subset A,\,\, \mbox{such that}\,\, x_n
 \to x \,\,\mbox{weakly in}\,\, H\}.$$
\end{definition}
Note that in general topological space, $\bar{A}^{ws}$ is different from $\bar{A}$ or the weak closure
$\bar{A}^w$ of $A$.
But if $A$ is a convex subset of a Banach space, then we know that $\bar{A} = \bar{A}^w = \bar{A}^{ws}$.
\begin{lemma} \label{lemme1.2} (see Lemma 2.4 in \cite{Zhong-Yang-Sun})  Let $H$ be a Banach space and $\kappa$ be the Kuratowski measure of noncompactness.
Then for any subset $A$ of $H$, we have
  $\kappa(A) = \kappa(\bar{A}^{ws})$.
\end{lemma}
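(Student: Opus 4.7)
The plan is to prove the equality by showing both inequalities. The direction $\kappa(A)\le\kappa(\bar A^{ws})$ is immediate, since $A\subset\bar A^{ws}$ and monotonicity (property (2) of Lemma \ref{Kuratowski-prop}) gives $\kappa(A)\le\kappa(\bar A^{ws})$.

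For the reverse inequality I would fix $\delta>\kappa(A)$ and exhibit a finite cover of $\bar A^{ws}$ by sets of diameter less than $\delta$. First I would observe that in the definition of $\kappa$ one may replace the covering sets by their closed convex hulls without increasing the diameter: if $B\subset H$ has diameter $d$, then for any two convex combinations $u=\sum\alpha_ix_i$ and $v=\sum\beta_jx_j$ of points of $B$ one has $\|u-v\|\le\sum_{i,j}\alpha_i\beta_j\|x_i-x_j\|\le d$, and the same bound survives taking norm-closures. Hence $\kappa(A)<\delta$ is equivalent to the existence of a finite cover $A\subset\bigcup_{i=1}^n C_i$ where each $C_i$ is closed, convex, with $\mathrm{diam}(C_i)<\delta$.

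The second and key step is to invoke Mazur's theorem: in a Banach space a closed convex set is weakly closed, hence weakly sequentially closed. Now take any $x\in\bar A^{ws}$, so that $x_n\tow x$ for some sequence $\{x_n\}\subset A$. By the pigeonhole principle, some subsequence $\{x_{n_k}\}$ lies entirely in a single $C_{i_0}$; since $C_{i_0}$ is weakly sequentially closed, the weak limit satisfies $x\in C_{i_0}$. Therefore $\bar A^{ws}\subset\bigcup_{i=1}^n C_i$, which gives $\kappa(\bar A^{ws})<\delta$. Letting $\delta\searrow\kappa(A)$ yields $\kappa(\bar A^{ws})\le\kappa(A)$, which combined with the first direction completes the proof.

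The main technical point, and the only nontrivial obstacle, is the reduction to weakly sequentially closed covering sets. Once one realises that (i) the infimum defining $\kappa$ is unchanged when one restricts to covers by closed convex sets (diameter is preserved under closed convex hull), and (ii) Mazur's theorem turns closed convexity into weak sequential closedness, the remainder is a routine pigeonhole argument on the weakly convergent sequences defining elements of $\bar A^{ws}$.
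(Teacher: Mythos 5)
The paper does not prove this lemma itself; it only cites Lemma 2.4 of \cite{Zhong-Yang-Sun}, and your argument is precisely the standard proof of that result: monotonicity for one inequality, and for the other the passage to closed convex covering sets (which preserves diameter) combined with Mazur's theorem so that each covering set is weakly sequentially closed and therefore also contains every weak sequential limit point of the points of $A$ it contains. The only cosmetic point is that the paper's definition of $\kappa$ uses finite \emph{open} covers while your $C_i$ are closed, but since an $\epsilon$-neighbourhood increases diameter by at most $2\epsilon$ the infimum is unaffected, so your proof is correct and complete.
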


\subsection{Compactness of multivalued semiflows}
We define three compactness type properties of multivalued semiflows and
investigate in which spaces these properties coincide.

\begin{definition}
Let $H$ be a complete metric space. The multivalued semiflow $G:\mathbb{R}^+\times H\to P(H)$ is $\omega$-limit compact if for every bounded set $B\subset H$ we have
$$
     \kappa\left(\bigcup_{t\geq\tau} G(t,B)\right) \to 0, \quad \tau\to\infty.
$$
\end{definition}

\begin{definition}
Let $H$ be a complete metric space. The multivalued semiflow $G:\mathbb{R}^+\times H\to P(H)$ is asymptotically compact if for every bounded set $B\subset H$, and for all sequences $t_n\to \infty$ and
$\xi_n\in G(t_n,B)$, there exists a subsequence $\{\xi_{n_k}\}$ such that $\xi_{n_k}\to\xi$ strongly in $H$ for some $\xi\in H$.
\end{definition}

\begin{definition}
Let $H$ be a Banach space. The multivalued semiflow $G:\mathbb{R}^+\times H\to P(H)$ satisfies the flattening condition if for every bounded set $B\subset H$ and $\epsilon>0$ there exists
$t_0(B,\epsilon)$ and a finite dimensional subspace $E$ of $H$ such that for a bounded projector $P:H\to E$,
the set $P\left(\bigcup_{t\geq t_{0}} G(t,B)\right)$ is bounded in $H$ and
$$(I-P)\left(\bigcup_{t\geq t_{0}} G(t,B)\right) \subset B(0,\epsilon).$$
\end{definition}

The notion of $\omega$-limit compactness is used to study attractors for single valued semiflows for
example in \cite{Zhong-Yang-Sun}, and was generalized to strict multivaled semiflows in \cite{Wang-Zhou-2007}. The notion of asymptotic compactness for $m$-semiflows is used in \cite{Melnik-1998}. Note, however, that in \cite{Melnik-1998} the term ''asymptotic compactness'' is not used directly, it is assumed that $m$-semiflow should be asymptotically upper semicompact and eventually bounded but the conjunction of these two notions is equivalent to asymptotic compactness (see \cite{Caraballo-2003} for the discussion of the relation between these notions). Next two lemmata show that in complete metric spaces, for $m$-semiflows which are not necessarily strict, $\omega$-limit compactness is equivalent to asymptotic compactness.

\begin{lemma}\label{lemma:limit-asymptotic}
If the $m$-semiflow $G$ on the complete metric space $H$ is $\omega$-limit compact then it is asymptotically compact.
\end{lemma}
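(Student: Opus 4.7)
The plan is a direct Cantor diagonal extraction, using the definition of $\kappa$ in terms of finite covers by sets of small diameter. Fix a bounded set $B\subset H$, a sequence $t_n\to\infty$, and $\xi_n\in G(t_n,B)$. For each $\tau\geq 0$ introduce the tail
\[
A_\tau := \bigcup_{t\geq\tau} G(t,B).
\]
The family $\{A_\tau\}$ is monotone decreasing in $\tau$, and by the $\omega$-limit compactness of $G$ we have $\kappa(A_\tau)\to 0$ as $\tau\to\infty$. Since $t_n\to\infty$, for every $\tau$ there exists $N(\tau)$ with $\xi_n\in A_\tau$ whenever $n\geq N(\tau)$.

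Next, pick an increasing sequence $\tau_k\to\infty$ with $\kappa(A_{\tau_k})<1/k$. By the very definition of the Kuratowski measure of noncompactness, $A_{\tau_k}$ is covered by finitely many sets of diameter strictly less than $2/k$. A pigeonhole argument applied to the infinite tail $\{\xi_n\}_{n\geq N(\tau_k)}$ (all of which lie in $A_{\tau_k}$) produces a subsequence contained in a single set of diameter $<2/k$. Applying this iteratively, one constructs nested subsequences $\{\xi^{(k)}_n\}_{n\in\mathbb{N}}$ of the original $\{\xi_n\}$ such that the diameter of $\{\xi^{(k)}_n : n\geq 1\}$ is less than $2/k$.

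Then the diagonal sequence $\eta_k := \xi^{(k)}_k$ is Cauchy in $H$, hence convergent to some $\xi\in H$ by completeness of $H$. Since $\eta_k$ is a subsequence of the original $\xi_n$, this establishes asymptotic compactness.

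The argument contains no genuine obstacle beyond careful bookkeeping of the diagonal extraction: the $\omega$-limit compactness hypothesis and property~(1) of Lemma~\ref{Kuratowski-prop} are tailored precisely to feed such a Cantor-type selection, and no structural property of $G$ (strictness, closedness, single-valuedness) is needed beyond the semiflow definition.
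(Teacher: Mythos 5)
Your argument is correct: the tails $\{\xi_n\}_{n\ge N(\tau_k)}$ lie in sets of Kuratowski measure tending to zero, and your Cantor diagonal extraction from finite covers of small diameter is a valid way to produce a Cauchy, hence convergent, subsequence. The overall skeleton is the same as the paper's, but the execution of the last step differs. The paper never unfolds the definition of $\kappa$: it writes $\kappa(\{\xi_i\}_{i=1}^\infty)=\kappa(\{\xi_i\}_{i=1}^k\cup\{\xi_i\}_{i=k+1}^\infty)\le\kappa(\{\xi_i\}_{i=k+1}^\infty)\le\kappa\bigl(\bigcup_{t\ge\tau_n}G(t,B)\bigr)\le 1/n$, concludes $\kappa(\{\xi_i\}_{i=1}^\infty)=0$, and then invokes property (1) of Lemma \ref{Kuratowski-prop} ($\kappa(A)=0$ iff $\bar A$ is compact) to get precompactness of the whole sequence. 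You instead re-prove, in the special case at hand, exactly the direction of property (1) that is needed, via pigeonhole and diagonalization. What the paper's route buys is brevity, since the compactness criterion is already on record; what yours buys is self-containedness and an explicit construction of the convergent subsequence without appealing to the equivalence in Lemma \ref{Kuratowski-prop}(1). Both correctly use only the semiflow definition and completeness of $H$, and neither needs strictness or any continuity of $G$. The only bookkeeping point worth making explicit in your write-up is that at stage $k+1$ the pigeonhole must be applied to the indices of the $k$-th subsequence that exceed $N(\tau_{k+1})$, which is harmless since only finitely many indices are discarded.
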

\begin{proof}
Let $B$ be a bounded set in $H$ and let $\tau_n$ be such that
\begin{eqnarray}
    \kappa\left(\bigcup_{t\geq\tau_n}G(t,B)\right) \leq \frac{1}{n}, \quad \tau_n\to\infty.
\end{eqnarray}
Let $t_i\to\infty$, $\xi_i\in G(t_i,B)$.
We shall prove that $\kappa(\{\xi_i\}_{i=1}^\infty)=0$. For every $k\in\bn$ we have
$$\kappa(\{\xi_i\}_{i=1}^\infty)=\kappa(\{\xi_i\}_{i=1}^k \cup \{\xi_i\}_{i=k+1}^\infty)
\leq\kappa(\{\xi_i\}_{i=k+1}^\infty).$$
For every $n\in \bn$ and $k$ such that $t_{k+1}\geq\tau_n$ we have $\kappa(\{\xi_i\}_{i=k+1}^\infty)\leq \frac{1}{n}$, whence $\kappa(\{\xi_i\}_{i=1}^\infty)=0$.  This proves, in turn, the precompactness of $\{\xi_i\}_{i=1}^\infty$, and, in consequence, the asymptotic compactness of $G$.
\end{proof}

The proof of next lemma uses the idea from the proof of Theorem 1 in \cite{Melnik-1998}.

\begin{lemma}\label{lemma:asymptoimpliesomega}
If the $m$-semiflow $G$ on the complete metric space $H$ is asymptotically compact then it is $\omega$-limit compact.
\end{lemma}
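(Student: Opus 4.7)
The plan is to argue by contradiction: assume $G$ is asymptotically compact but \emph{not} $\omega$-limit compact, then extract a sequence $\xi_n \in G(t_n,B)$ with $t_n\to\infty$ that contains no Cauchy (hence no convergent) subsequence, contradicting asymptotic compactness.

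First, I would fix a bounded $B\subset H$ witnessing the failure of $\omega$-limit compactness and set $A_\tau = \bigcup_{t\geq\tau} G(t,B)$. Since $\tau \mapsto A_\tau$ is nonincreasing, by property (2) of Lemma \ref{Kuratowski-prop} the map $\tau \mapsto \kappa(A_\tau)$ is nonincreasing, so the assumption $\kappa(A_\tau)\not\to 0$ upgrades to the existence of some $\varepsilon>0$ with $\kappa(A_\tau)\geq \varepsilon$ for all $\tau\geq 0$. In particular $\kappa(A_n)\geq \varepsilon$ for every $n\in\mathbb{N}$.

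Next, I would construct by induction a sequence $\{\xi_n\}$ with $\xi_n\in A_n$ and pairwise distances $\rho(\xi_i,\xi_j)\geq \varepsilon/4$ for all $i\ne j$. Pick $\xi_1\in A_1$ arbitrarily. Having chosen $\xi_1,\ldots,\xi_{n-1}$, the finite family of balls $\{B(\xi_i,\varepsilon/8)\}_{i=1}^{n-1}$ consists of sets of diameter at most $\varepsilon/4<\varepsilon$, so by the definition of the Kuratowski measure these balls cannot cover $A_n$ (otherwise we would have $\kappa(A_n)\leq \varepsilon/4<\varepsilon$). Therefore there exists $\xi_n\in A_n$ with $\rho(\xi_n,\xi_i)\geq \varepsilon/8$ for all $i<n$; after relabeling constants this yields the desired separation.

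Finally, by definition of $A_n$, each $\xi_n$ lies in $G(t_n,B)$ for some $t_n\geq n$, so $t_n\to\infty$. Asymptotic compactness of $G$ then forces a subsequence $\xi_{n_k}$ to converge strongly in $H$, which is impossible since $\{\xi_n\}$ is uniformly separated. This contradiction completes the proof.

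The only delicate point is the induction step: one must be careful to use balls of radius strictly smaller than $\varepsilon/2$ so that the resulting finite cover really falls under the $<\delta$ clause in the definition of $\kappa$, and one must remember that $A_n\subset A_i$ for $i\leq n$ is not needed—what matters is only that $\kappa(A_n)\geq \varepsilon$, which gives room inside $A_n$ to place $\xi_n$ far from the previously chosen points (which need not themselves belong to $A_n$).
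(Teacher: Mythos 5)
Your proof is correct, but it takes a genuinely different route from the paper's. You argue by contradiction: the monotonicity of $\tau\mapsto\kappa\bigl(\bigcup_{t\geq\tau}G(t,B)\bigr)$ upgrades the failure of $\omega$-limit compactness to a uniform lower bound $\kappa(A_\tau)\geq\varepsilon$, and you then extract an $\varepsilon/8$-separated sequence $\xi_n\in G(t_n,B)$ with $t_n\to\infty$ --- the standard fact that a set of noncompactness measure at least $\varepsilon$ cannot be covered by finitely many balls of radius $\varepsilon/8$, so each new point can be placed away from the finitely many already chosen. Such a sequence has no Cauchy subsequence, contradicting asymptotic compactness. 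All the steps check out, including the case where the sets $A_n$ are unbounded (then $\kappa(A_n)=\infty$ and the covering argument goes through a fortiori), and your care about the radius versus the diameter clause in the definition of $\kappa$ is exactly right. The paper instead proceeds constructively: it introduces the $\omega$-limit set $\omega(B)=\bigcap_{t\geq 0}\overline{\bigcup_{s\geq t}G(s,B)}$, proves it is nonempty, compact, and attracts $G(t,B)$, and then obtains a finite $\epsilon$-cover of $\bigcup_{t\geq t_0}G(t,B)$ by fattening a finite $\epsilon/2$-cover of the compact set $\omega(B)$. Your argument is shorter and more elementary, but the paper's longer route is deliberate: it establishes as a by-product the properties of $\omega(B)$ recorded in Corollary \ref{corollary-1}, which are used again later (for instance in the proof of Theorem \ref{thm:pointd}, where the set $B_2=\C{O}_\varepsilon(\omega(B_1))$ is built from an $\omega$-limit set). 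So if you adopted your proof, Corollary \ref{corollary-1} would need a separate argument.
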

\begin{proof}
Let $G$ be asymptotically compact on $H$. We shall prove first that for every bounded set $B$ in $H$ the set
\begin{equation}\label{eq:omega}\omega(B) = \bigcap_{t\geq 0}\overline{\bigcup_{s\geq t}G(s, B)}\end{equation}
is nonempty.

Indeed, let $t_n\to\infty$, $\xi_n\in G(t_n,B)$ and let, for a subsequence, still denoted by $n$, $\xi_n \to \xi$.
For every $\tau\geq 0$ and every index $n$ such that $t_n\geq \tau$, we have
$\xi_n\in \bigcup_{t\geq \tau}G(t, B)$. Since $\xi_n\to\xi$, then for every $\tau \geq 0$,
$\xi\in \overline{\bigcup_{t\geq \tau}G(t, B)}$. Thus $\xi\in \omega(B)$.

Now we prove that for every bounded set $B\subset H$ we have $\mathrm{dist}_H(G(t,B), \omega(B)) \to 0$ as $t\to\infty$. Assume, to the contrary, that there exists the bounded set $B_0\in H$ and the sequences $t_n\to\infty$ and $\xi_n\in G(t_n,B_0)$ such that $\mathrm{dist}_H(\xi_n, \omega(B_0)) \geq\epsilon>0$.
By the asymptotic compactness property, for a subsequence, still denoted by $n$, we have
$\xi_n\to \xi$ in $H$. But $\xi\in \omega(B)$, which gives a contradiction.

 Let $B\subset H$ be bounded and let $x_n$ be a sequence in $\omega(B)$. We prove that this sequence has a subsequence that converges to some element in $\omega(B)$ and thus the set $\omega(B)$ is compact. As
$$x_n\in \overline{\bigcup_{t\geq s}G(t, B)} \quad \mathrm{\,\,for\,\,every\,\,} s\geq 0,$$
then, for any sequence $t_n\to \infty$ there exists $\xi_{k_n}\in G(t_{k_n},B)$ such that
$\rho(x_n,\xi_{k_n})\leq\frac{1}{n}$. But, by asymptotic compactness, there exists a subsequence $\xi_\nu$ of $\xi_{k_n}$ converging to some $\xi\in \omega(B)$. Thus, also $x_\nu\to \xi$.

Now let us fix $\epsilon >0$. We need to show that there exists $t_0>0$ such that the set $\bigcup_{t\geq t_0}G(t,B)$ can be covered by
 finite number of sets with diameter $\epsilon$. From compactness of $\omega(B)$ it follows that there exists finite number of points $\{x_i\}_{i=1}^n$ such that
$
\omega(B) \subset \bigcup_{i=1}^n B(x_i,\frac{\epsilon}{2}).
$
Now choose $t_0>0$ such that $\mathrm{dist}_H(G(t,B), \omega(B))<\frac{\varepsilon}{2}$ whenever $t\geq t_0$. It follows that for all $t\geq t_0$
we have $G(t,B) \subset \bigcup_{i=1}^n B(x_i,\epsilon)$ and the proof is complete.
\end{proof}

Note that in the course of the proof of Lemma \ref{lemma:asymptoimpliesomega} we have shown the following corollary

\begin{corollary} \label{corollary-1}
Let $H$ be a complete metric space space and let the multivalued semiflow $G$ on $H$ be
$\omega$-limit compact. Then for every bounded set $B$ in $H$ its $\omega$-limit set $\omega(B)$ defined by
(\ref{eq:omega}) is a nonempty and compact set such that $\mathrm{dist}_H(G(t,B),\omega(B))\to 0$ as $t\to\infty$.
\end{corollary}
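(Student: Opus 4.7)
The plan is to derive this corollary as a direct harvesting of the intermediate steps already carried out in the proof of Lemma \ref{lemma:asymptoimpliesomega}, combined with the implication established in Lemma \ref{lemma:limit-asymptotic}. Since $G$ is assumed $\omega$-limit compact, Lemma \ref{lemma:limit-asymptotic} gives us asymptotic compactness for free, and asymptotic compactness is precisely what powers each of the three assertions.

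First I would show $\omega(B) \neq \emptyset$. Pick any $t_n \to \infty$ and any $\xi_n \in G(t_n, B)$. Asymptotic compactness yields a subsequence $\xi_{n_k} \to \xi$ in $H$. For every fixed $\tau \geq 0$, all but finitely many $\xi_{n_k}$ lie in $\bigcup_{t \geq \tau} G(t,B)$, so $\xi \in \overline{\bigcup_{t \geq \tau} G(t,B)}$, and hence $\xi \in \omega(B)$.

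Second, I would prove $\mathrm{dist}_H(G(t,B), \omega(B)) \to 0$ by contradiction: if not, there exist $\epsilon > 0$, $t_n \to \infty$ and $\xi_n \in G(t_n,B)$ with $\mathrm{dist}_H(\xi_n,\omega(B)) \geq \epsilon$; by asymptotic compactness a subsequence of $\xi_n$ converges to some $\xi$, which by the argument above lies in $\omega(B)$, contradicting $\mathrm{dist}_H(\xi_n, \omega(B)) \geq \epsilon$.

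Third, I would verify compactness of $\omega(B)$. Given any sequence $x_n \in \omega(B)$, since $x_n \in \overline{\bigcup_{t \geq n} G(t,B)}$ we can find $\xi_{k_n} \in G(t_{k_n},B)$ with $t_{k_n} \to \infty$ and $\rho(x_n,\xi_{k_n}) \leq \tfrac{1}{n}$. Asymptotic compactness extracts a convergent subsequence $\xi_\nu \to \xi$; the same subsequence of $x_n$ then converges to $\xi$, and the first step shows $\xi \in \omega(B)$. Since $\omega(B)$ is defined as an intersection of closed sets it is closed, so this sequential compactness together with closedness gives compactness in the metric space $H$. The only subtlety is to make sure that the selection $\xi_{k_n}$ with $t_{k_n} \to \infty$ is actually legitimate in a general metric (rather than normed) setting, but this follows just by picking, for each $n$, an element of the set $\bigcup_{t \geq n} G(t,B)$ within distance $1/n$ of $x_n$, which is possible precisely because $x_n$ lies in the closure of that union. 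There is no real obstacle here; the work was already done in the proof of Lemma \ref{lemma:asymptoimpliesomega}, and the corollary merely extracts and names the three properties of $\omega(B)$ that appeared along the way.
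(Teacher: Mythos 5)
Your proposal is correct and follows essentially the same route as the paper: the paper explicitly presents this corollary as a byproduct of the proof of Lemma \ref{lemma:asymptoimpliesomega}, whose three steps (nonemptiness, attraction, compactness of $\omega(B)$) are exactly the ones you reproduce, powered by the asymptotic compactness obtained from Lemma \ref{lemma:limit-asymptotic}. Your handling of the selection $\xi_{k_n}$ with $t_{k_n}\geq n$ is if anything slightly cleaner than the paper's phrasing.
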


Next two lemmata relate the flattening condition with $\omega$-limit compactness.
Note that they generalize Theorem 3.10 in \cite{Ma-Wang-Zhong} to the case of $m$-semiflows.
The analogous results for the multivalued, strict and nonautonomous case are established in Theorem 4.10 in \cite{Wang-Zhou-2007}.

\begin{lemma} \label{thm-flat-limit}
If the $m$-semiflow $G$ on the Banach space $H$ satisfies the flattening condition then $G$ is $\omega$-limit compact.
\end{lemma}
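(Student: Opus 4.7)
The plan is to use the flattening condition to control $\bigcup_{t\geq\tau} G(t,B)$ by splitting it into a finite-dimensional bounded part and a small-norm part, then invoke property (8) of the Kuratowski measure $\kappa$ from Lemma \ref{Kuratowski-prop}.

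Fix a bounded set $B\subset H$ and $\varepsilon>0$. By the flattening condition, pick $t_0=t_0(B,\varepsilon)$, a finite-dimensional subspace $E\subset H$, and a bounded projector $P:H\to E$ such that $P\bigl(\bigcup_{t\geq t_0} G(t,B)\bigr)$ is bounded in $H$ and $(I-P)\bigl(\bigcup_{t\geq t_0} G(t,B)\bigr)\subset B(0,\varepsilon)$. Denoting $A_\tau:=\bigcup_{t\geq\tau}G(t,B)$, the triangle inequality $\|x\|\leq \|Px\|+\|(I-P)x\|$ then shows $A_\tau$ is bounded for all $\tau\geq t_0$, so $\kappa(A_\tau)$ is a finite quantity.

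Next I would take $H_1:=E$ and $H_2:=(I-P)(H)$, which gives a topological decomposition $H=H_1+H_2$ with canonical projectors $P$ and $Q:=I-P$. For $\tau\geq t_0$, the set $Q(A_\tau)=(I-P)(A_\tau)$ lies inside $B(0,\varepsilon)$, hence its diameter is at most $2\varepsilon$. Applying property (8) of Lemma \ref{Kuratowski-prop} yields $\kappa(A_\tau)\leq 2\varepsilon$ for every $\tau\geq t_0$.

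Finally, since $A_\tau$ is monotonically nonincreasing in $\tau$, property (2) of $\kappa$ gives that $\tau\mapsto \kappa(A_\tau)$ is nonincreasing. Thus, given any $\varepsilon>0$, we have $\kappa(A_\tau)\leq 2\varepsilon$ for all $\tau$ sufficiently large, which means $\kappa\bigl(\bigcup_{t\geq\tau} G(t,B)\bigr)\to 0$ as $\tau\to\infty$, proving that $G$ is $\omega$-limit compact. The only step that requires a little care is checking that the flattening condition indeed produces a bounded $A_\tau$ so that $\kappa$ is finite, and that the projectors from the flattening condition can legitimately play the role of the canonical projectors demanded by property (8); once both are noted, the rest is a direct application of the measure-of-noncompactness properties already collected.
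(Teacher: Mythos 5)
Your proof is correct and follows essentially the same route as the paper: both split $\bigcup_{t\geq t_0}G(t,B)$ via the projector $P$ from the flattening condition and use the measure-of-noncompactness properties (the paper invokes (6) and (8) of Lemma \ref{Kuratowski-prop} to get $\kappa \leq \kappa(P(\cdot))+\kappa((I-P)(\cdot))\leq 2\epsilon$, while you apply (8) directly to the decomposition $H=E+(I-P)(H)$). Your explicit checks that $A_\tau$ is bounded and that $P$, $I-P$ serve as the canonical projectors are fine and only make explicit what the paper leaves implicit.
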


\begin{proof} The proof follows the lines of the proof of assertion (1) in Theorem 3.10 in
\cite{Ma-Wang-Zhong}. Let us choose $B\subset H$ bounded and $\epsilon>0$. Using the flattening condition and assertions (6) and (8) of Lemma \ref{Kuratowski-prop} we can find $t_0(B,\epsilon)>0$ such that
\begin{eqnarray*}
     &\kappa\left(\bigcup_{t\geq t_0}G(t,B)\right) \leq \kappa\left(P\left(\bigcup_{t\geq t_0}G(t,B)\right)\right) +
     \kappa\left((I-P)\left(\bigcup_{t\geq t_0}G(t,B)\right)\right) \leq\\
     &\leq  \kappa(B(0,\epsilon)) = 2\epsilon.
\end{eqnarray*}
Whence, $G$ is $\omega$-limit compact.
\end{proof}

\begin{lemma}
Let $G$ be a multivalued semiflow on a uniformly convex Banach space $H$. If $G$ is $\omega$-limit compact then it satisfies the flattening condition.
\end{lemma}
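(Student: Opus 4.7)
The plan is to use the $\omega$-limit compactness of $G$ to reduce the flattening condition to a finite-dimensional approximation problem, and then to invoke the uniform convexity of $H$ to build a bounded linear projector onto a suitable finite-dimensional subspace.

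Fix a bounded $B \subset H$ and $\epsilon > 0$. By $\omega$-limit compactness, I would choose $t_0$ such that $\kappa(A) < \eta$, where $A := \bigcup_{t \geq t_0} G(t, B)$ and $\eta > 0$ is to be selected later. Since the paper's convention assigns $\kappa = \infty$ to unbounded sets, $A$ is automatically bounded, so $P(A)$ is bounded in $H$ for any bounded linear projector $P$ and the ``boundedness in $H$'' half of the flattening condition is for free. By the definition of $\kappa$, there exist finitely many sets $D_1,\ldots,D_n$ of diameter $<\eta$ covering $A$; picking $y_i \in D_i$ gives $A \subset \bigcup_{i=1}^n B(y_i,\eta)$, and I would set $E := \mathrm{span}\{y_1,\ldots,y_n\}$.

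Granted a bounded linear projector $P: H \to E$, for any $a \in A$ I write $a = y_{i(a)} + r_a$ with $\|r_a\| \leq \eta$; since $Py_{i(a)} = y_{i(a)}$, one has $(I-P)a = (I-P)r_a$, so $\|(I-P)a\| \leq \|I-P\| \cdot \eta$. Hence if $\eta$ can be chosen so that $\|I-P\| \cdot \eta < \epsilon$, the flattening condition holds.

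The main obstacle is constructing $P$ with suitably controlled norm. In a Hilbert space, the orthogonal projection onto $E$ satisfies $\|I-P\| \leq 1$ regardless of $\dim E$, and the argument is immediate. In a general Banach space, generic constructions such as Auerbach's lemma or the Kadec--Snobar theorem yield projectors whose norm grows with $\dim E$; since $n = \dim E$ grows as $\eta$ shrinks, one faces a circular dependency between the cover's fineness and the projection norm. Uniform convexity of $H$ is precisely the extra geometric structure needed to resolve this: exploiting the single-valued, norm-to-norm continuous duality map $J: H \to H^{*}$ of a uniformly convex space, one can perform a ``generalized orthogonal'' construction producing a bounded projector onto $E$ whose complementary norm $\|I-P\|$ is controlled uniformly in $\dim E$. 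Once this projector is in hand, picking $\eta < \epsilon/\|I-P\|$ closes the argument.
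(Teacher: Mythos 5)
Your setup (cover $A=\bigcup_{t\ge t_0}G(t,B)$ by finitely many balls $B(y_i,\eta)$, take $E=\mathrm{span}\{y_1,\dots,y_n\}$) matches the paper's, but the final step contains a genuine gap. You reduce everything to finding a bounded \emph{linear} projector $P:H\to E$ with $\|I-P\|$ controlled uniformly in $\dim E$, and you assert that uniform convexity supplies such a construction via the duality map. This is false in general: by the Lindenstrauss--Tzafriri solution of the complemented subspace problem, a Banach space all of whose finite-dimensional subspaces admit linear projections of uniformly bounded norm is isomorphic to a Hilbert space. The spaces $L^p(\Omega)$ with $p\neq 2$ are uniformly convex but not isomorphic to Hilbert spaces, so they contain finite-dimensional subspaces whose minimal linear projection constants are arbitrarily large. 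Since your $E$ depends on $\eta$ and its dimension $n$ blows up as $\eta\to 0$, the circular dependency you correctly identify between the fineness of the cover and $\|I-P\|$ is \emph{not} resolved by uniform convexity in the way you claim, and the choice $\eta<\epsilon/\|I-P\|$ cannot be made.

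The paper exploits uniform convexity differently, and this is the missing idea: in a uniformly convex space every closed convex set (in particular the finite-dimensional subspace $E$) has a unique nearest point to each $x\in H$, so one may take $P$ to be the \emph{metric} (nearest-point) projection onto $E$, which satisfies the exact pointwise identity $\|x-Px\|=\mathrm{dist}(x,E)$. Then for $a\in A$ one has $\|(I-P)a\|=\mathrm{dist}(a,E)\le\|a-y_{i(a)}\|<\eta$, so taking $\eta=\epsilon$ finishes the proof with no operator norm of $I-P$ ever entering, and no linearity of $P$ being used. (This $P$ is in general nonlinear, which is consistent with how the flattening condition is actually applied in Lemma 2.7 of the paper, where only the set inclusions $P(A)\subset B(0,R)$ and $(I-P)(A)\subset B(0,\epsilon)$ matter.) Your argument, by contrast, genuinely needs $Py_{i(a)}=y_{i(a)}$ plus additivity to write $(I-P)a=(I-P)r_a$, and then a uniform operator-norm bound --- neither of which is available. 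As written, your proof would only go through in a Hilbert space (or a space isomorphic to one), not in an arbitrary uniformly convex Banach space.
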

\begin{proof}
The proof does not touch any continuity properties of $G$ and we follow strictly the lines of the proof of assertion (2) in Theorem 3.10 in
\cite{Ma-Wang-Zhong}. Let $B$ be a bounded set in $H$. By the $\omega$-limit compactness property, for every $\epsilon>0$ there exists $t(B,\epsilon)>0$ such that
$$
    \bigcup_{t\geq t(B,\epsilon)}G(t,B) \subset \bigcup_{i=1}^n A_i
$$
for some sets $A_i$ of diameter less then $\epsilon$. Let $x_i\in A_i$. Then
$$
    \bigcup_{t\geq t(B,\epsilon)}G(t,B) \subset \bigcup_{i=1}^n B(x_i, \epsilon).
$$
Denote $H_1=\mathrm{span}\{x_1,...,x_n\}$. Since $H$ is uniformly convex, there exists a projection
$P:H\to H_1$ such that for every $x\in H$, $\|x-Px\|=\mathrm{dist}(x,H_1)$. Hence,
    $$(I-P)\bigcup_{t\geq t(B,\epsilon)}G(t,B) \subset B(0,\epsilon),$$ which proves the flattening condition.
\end{proof}
Summarizing the results of this subsection, we have
\begin{align*}
\mbox{in complete metric spaces}&\ \ \ (\mbox{asymptotic compactness})\Longleftrightarrow(\omega\mbox{-limit compactness})\\
\mbox{in Banach spaces}&\ \ \ (\mbox{flattening condition})\Longrightarrow(\omega\mbox{-limit compactness})\\
\mbox{in uniformly convex Banach spaces}&\ \ \ (\mbox{flattening condition})\Longleftrightarrow(\omega\mbox{-limit compactness})
\end{align*}

\subsection{Continuity and closedness of multivalued semiflows}

\begin{definition} Let $H$ be a normed space and $X$ be a topological space. The multifunction $G:H\to 2^X$ is upper semicontinuous
if for every sequence $x_n\to x$ in $H$ and for every open set $V\subset X$ such that $G(x)\subset V$ there exists $n_0\in \mathbb{N}$ such that
$G(x_n)\subset V$ for all $n\geq n_0$.
\end{definition}

\begin{definition}
The multivalued semiflow $G:\mathbb{R}^+\times H\to P(H)$ on the Banach space $H$ is \textit{closed}
if for all $t\geq 0$ the graph of the multivalued mapping $x\to G(t,x)$ is closed in strong-strong topology.
\end{definition}

\begin{definition}
The multivalued semiflow $G:\mathbb{R}^+\times H\to P(H)$ on the Banach space $H$ is \textit{demiclosed}
if for all $t\geq 0$ the graph of the multivalued mapping $x\to G(t,x)$ is closed in strong-weak topology.
\end{definition}

Note that, obviously, every demiclosed $m$-semiflow is closed.

We introduce the condition ($NW$), ''norm-to-weak'', that generalizes to the multivalued case the norm-to-weak continuity assumed in \cite{Zhong-Yang-Sun} for semigroups (see Definition 3.4 in \cite{Zhong-Yang-Sun}).

\begin{definition}
The multivalued semiflow $G:\mathbb{R}^+\times H\to P(H)$ on the Banach space $H$ satisfies the condition ($NW$) if for every $t\geq 0$ from $x_n\to x$ in $H$ and
$\xi_n\in G(t,x_n)$ it follows that there exists a subsequence $\{\xi_{k_n}\}$, such that $\xi_{k_n}\to \xi$ weakly in $H$ with $\xi\in G(t,x)$.
\end{definition}

Note that the similar condition is assumed for the nonautonomous multivalued case in \cite{Wang-Zhou-2007} (see condition (3) in Definition 2.6 in \cite{Wang-Zhou-2007}), where the strict semiflow is considered and, instead of a subsequence, whole sequence is assumed to converge weakly. Next lemma provides the characterization of the condition ($NW$) in Banach spaces.

\begin{lemma}
Let $H$ be a Banach space. The multivalued semiflow $G:\mathbb{R}^+\times H\to P(H)$ satisfies the condition ($NW$) if and only if for all $(t,x)\in \mathbb{R}^+\times H$ the set $G(t,x)$ is weakly compact and for all $t\in \mathbb{R}^+$ the multifunction $G(t,\cdot)$ is strong-weak upper semicontinuous.
\end{lemma}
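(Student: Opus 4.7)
\medskip

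\noindent\textbf{Proof proposal.} The plan is to prove the two implications separately, using Eberlein--\v{S}mulian to move between weakly compact and weakly sequentially compact sets, and the standard separation of a compact set from a disjoint closed set by disjoint open sets in the (Hausdorff, locally convex) weak topology.

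\emph{Necessity.} Assume $(NW)$. To show that $G(t,x)$ is weakly compact, fix $(t,x)$ and apply $(NW)$ with the constant sequence $x_n=x$: any sequence in $G(t,x)$ admits a subsequence that converges weakly to some element of $G(t,x)$. Hence $G(t,x)$ is weakly sequentially compact, and by the Eberlein--\v{S}mulian theorem it is weakly compact. For strong-weak upper semicontinuity, suppose it fails: then there exist $t\geq 0$, $x\in H$, a weakly open $V\supset G(t,x)$ and $x_n\to x$ in $H$ with $G(t,x_n)\not\subset V$ for all (large) $n$. Pick $\xi_n\in G(t,x_n)\setminus V$; by $(NW)$ some subsequence $\xi_{k_n}\to\xi$ weakly with $\xi\in G(t,x)\subset V$, but then $\xi_{k_n}\in V$ for large $n$ since $V$ is weakly open, a contradiction.

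\emph{Sufficiency.} Assume weak compactness of values and strong-weak upper semicontinuity. Given $x_n\to x$ and $\xi_n\in G(t,x_n)$, the set $K=\{x_n\}_{n=1}^\infty\cup\{x\}$ is norm-compact in $H$. The key step is the classical fact that an upper semicontinuous multifunction with compact values maps compact sets to compact sets: covering each $G(t,y)$, $y\in K$, by finitely many sets from an arbitrary weakly open cover of $G(t,K)=\bigcup_{y\in K}G(t,y)$, using upper semicontinuity to pass to a neighbourhood of $y$, and then extracting a finite subcover of $K$, shows that $G(t,K)$ is weakly compact. Since $\xi_n\in G(t,K)$, Eberlein--\v{S}mulian yields a subsequence $\xi_{k_n}\to\xi$ weakly for some $\xi\in H$.

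It remains to show $\xi\in G(t,x)$. The weak topology on $H$ is locally convex Hausdorff, so the weakly compact set $G(t,x)$ is weakly closed. If $\xi\notin G(t,x)$, then $G(t,x)$ and $\{\xi\}$ can be separated by disjoint weakly open sets $V\supset G(t,x)$ and $U\ni\xi$. By strong-weak upper semicontinuity, $G(t,x_{k_n})\subset V$ for all large $n$, hence $\xi_{k_n}\in V$; but $\xi_{k_n}\to\xi$ weakly and $U$ is a weakly open neighbourhood of $\xi$, so $\xi_{k_n}\in U$ eventually, contradicting $U\cap V=\emptyset$. The main subtle point throughout is the correct use of Eberlein--\v{S}mulian together with the compactness-preservation lemma for upper semicontinuous multifunctions, since in a general Banach space the weak topology is not metrizable and one cannot argue purely by sequences.
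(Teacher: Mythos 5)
Your proof is correct. The necessity half is essentially the paper's own argument (constant sequence for weak compactness of values, contradiction via a weakly open $V\supset G(t,x)$ for upper semicontinuity); being explicit about Eberlein--\v{S}mulian to pass from weak sequential compactness to weak compactness is a welcome touch. The sufficiency half, however, takes a genuinely different route. The paper (following Proposition 4.1.11 of Denkowski--Mig\'orski--Papageorgiou) argues by contradiction: it assumes that every $\eta\in G(t,x)$ has a weak neighbourhood $V(\eta)$ eventually avoided by the $\xi_n$, extracts a finite subcover of the weakly compact set $G(t,x)$, and contradicts upper semicontinuity. As written, that contradiction only shows that some $\eta\in G(t,x)$ is a weak \emph{cluster point} of $(\xi_n)$; since the weak topology is not metrizable, the step from cluster point to weakly convergent subsequence is left implicit and needs the relative weak compactness of $\{\xi_n\}$ (via metrizability on the separable subspace it generates, or a Whitley-type argument). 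Your construction sidesteps this: you first prove $G(t,K)$ is weakly compact for the norm-compact set $K=\{x_n\}\cup\{x\}$ using the standard compactness-preservation lemma for upper semicontinuous compact-valued multifunctions (valid here because sequential upper semicontinuity on a metric domain gives the neighbourhood form), then invoke Eberlein--\v{S}mulian to get an actual weakly convergent subsequence, and finally locate the limit in $G(t,x)$ by separating the weakly compact (hence weakly closed) set $G(t,x)$ from a putative outside limit by disjoint weakly open sets in the Hausdorff weak topology. This costs you an extra lemma but buys a fully explicit extraction of the subsequence, which is exactly the point the paper's covering argument glosses over.
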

\begin{proof}
The proof follows the lines of the proof of Proposition 4.1.11 in \cite{denkowski2003}. First assume that $G$ satisfies the condition ($NW$).
Let us choose $(t,x)\in \mathbb{R}^+\times H$ and the sequence $\xi_n\in G(t,x)$. For a subsequence, $\xi_\nu\to \xi$ weakly in $H$ with
$\xi \in G(t,x)$ and hence $G(t,x)$ is weakly compact. We pass to the proof of upper semicontinuity. Let $x_n\to x$ strongly in $H$
and $V\subset H$ be a weakly open set such that $G(t,x)\subset V$. We continue the proof by contradiction. Assume that there exists the subsequence
$x_\nu$ and the sequence $\xi_\nu \in G(t,x_\nu)$ such that $\xi_\nu\not\in V$ for all indices $\nu$. From the condition ($NW$) we can choose another subsequence, still denoted by $\nu$ such that $\xi_\nu\to\xi$ weakly in $H$ and $\xi\in G(t,x)$ and moreover $\xi\in V$. However, since $H\setminus V$ is weakly closed and $\xi_\nu \in H\setminus V$ then $\xi\in H\setminus V$ and we have the contradiction.

Now we assume that $G(t,\cdot)$ is strong-weak upper semicontinuous and weakly compact valued. We need to show that the condition ($NW$) holds.
We take $x_n\to x$ strongly in $H$ and $\xi_n\in G(t,x_n)$. We continue by contradiction. Assume that for every $\eta\in G(t,x)$ we can find
the index $n_0$ and weak neighbourhood $V(\eta)$ such that $\xi_n\not\in V(\eta)$ for all $n\geq n_0$. The family $\{V(\eta)\}_{\eta\in G(t,x)}$
is a weakly open cover of a weakly compact set $G(t,x)$. Hence we have
$$
G(t,x)\subset \bigcup_{i=1}^n V(\eta_i)\equiv V.
$$
We are able to find the index $N_0$ such that for all $n\geq N_0$ we have $\xi_n\not\in V$. Since $V$ is weakly open, from upper semicontinuity it follows that there exists $m_0\in\mathbb{N}$ such that $G(t,x_n)\subset V$ for all $n\geq m_0$. Hence for $n\geq \max\{N_0,m_0\}$ we have
$$
V\not\ni \xi_n\in G(t,x_n)\subset V,
$$
a contradiction.
\end{proof}

\begin{lemma} \label{lemma-closed}
If $G:\mathbb{R}^+\times H\to P(H)$ on a Banach space $H$ is a multivalued semiflow satisfying condition ($NW$) then it is also demiclosed.
\end{lemma}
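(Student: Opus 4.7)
The plan is to unpack both definitions and check that demiclosedness follows immediately from condition ($NW$) using the uniqueness of weak limits. Specifically, to show the graph of $x\mapsto G(t,x)$ is closed in the strong-weak topology, I would pick an arbitrary $t\geq 0$ together with sequences $x_n\to x$ strongly in $H$ and $\xi_n\in G(t,x_n)$ with $\xi_n\rightharpoonup \xi$ weakly in $H$, and aim to conclude $\xi\in G(t,x)$.

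To do this, I would apply condition ($NW$) directly to the sequences $\{x_n\}$ and $\{\xi_n\}$. This produces a subsequence $\{\xi_{k_n}\}$ which converges weakly to some $\eta\in G(t,x)$. Since the whole sequence $\xi_n$ converges weakly to $\xi$ by hypothesis, every subsequence (in particular $\xi_{k_n}$) also converges weakly to $\xi$, and the uniqueness of weak limits in $H$ forces $\eta=\xi$. Therefore $\xi\in G(t,x)$, which is exactly the statement that the graph of $G(t,\cdot)$ is strong-weak closed.

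There is essentially no obstacle here: the lemma is a one-line consequence of ($NW$) combined with the Hausdorff property of the weak topology. The only thing worth a brief remark in the write-up is why uniqueness of weak limits applies, namely that a Banach space endowed with its weak topology is Hausdorff, so weak limits of sequences are unique whenever they exist. All other content of the proof is just translating between the subsequence extraction provided by ($NW$) and the graph-closure formulation of demiclosedness.
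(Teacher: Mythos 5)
Your proof is correct and is exactly the elementary argument the paper has in mind; the paper simply states that the proof ``follows directly from the definitions'' without writing it out, and your write-up supplies those details (apply ($NW$) to extract a weakly convergent subsequence with limit in $G(t,x)$, then identify that limit with $\xi$ by uniqueness of weak limits).
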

\begin{proof}
An elementary proof follows directly from the definitions.
\end{proof}

Summarizing the results of this subsection, we have, for an $m$-semiflow $G$ on a Banach space
\begin{align*}
(G(t,x)\,\, \mbox{is weakly compact and}&\,\, G(t,\cdot)\,\,\mbox{is strong-weak upper semicontinous})\Leftrightarrow (NW)\\
&\Downarrow\\
(G\,\,&\mbox{is demiclosed}) \Longrightarrow (G\,\,\mbox{is closed})
\end{align*}
\subsection{Abstract results on global attractor existence}

\begin{definition}
The set $A\subset H$ is called a global attractor for $G$ if:

(1) $A$ is compact in $H$.

(2) $A\subset G(t,A)$ for all $t\geq 0$ ($A$ is negatively semi-invariant).

(3) For every bounded $B\subset H$, $A$ attracts $B$ ($\mathrm{dist}_H(G(t,B),A) \to 0, \quad t\to\infty$).
\end{definition}

Now we formulate the theorem on attractor existence. The necessity is a known result, it is proved in Lemma 3 in \cite{Melnik-2008} (see also \cite{Melnik-1998}, \cite{Zgurovsky2012}). We provide the proof here since it is different than in \cite{Melnik-2008}: it relies on the weak topology arguments rather than strong topology ones. The argumentation relying on weak topology is related to condition ($NW$) and the method we apply in the examples. Moreover we provide also the sufficient condition.

Note that in the single valued case this theorem gives equivalent conditions for global attractor existence for the continuous semigroup (see Theorem 3.9 in \cite{Ma-Wang-Zhong}).

\begin{theorem} \label{thm-main1}
Let $H$ be a Banach space and let the multivalued semiflow $G:\mathbb{R}^+\times H\to P(H)$ be closed. Then there exists a global attractor for $G$ if and only if
\begin{itemize}
\item[(i)] $G$ has a bounded absorbing set $B_0$ in $H$ (i.e. there exists a bounded set $B_0\in H$ such that for any bounded set $B\subset H$ there exists $t_0>0$ such that $\bigcup_{t\geq t_0}G(t,B)\subset B_0$);
\item[(ii)] $G$ is $\omega$-limit compact.
\end{itemize}
\end{theorem}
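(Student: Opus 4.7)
The plan is to take the classical candidate $A := \omega(B_0)$ for the global attractor, let Corollary \ref{corollary-1} together with elementary Kuratowski-measure bookkeeping handle compactness and attraction, and reserve the delicate work for the negative semi-invariance $A \subset G(t,A)$, which is the only place where the non-strict semiflow inclusion and the closedness hypothesis meet nontrivially.

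For necessity, suppose $A$ is a global attractor. The set $B_0 := \{x \in H : \mathrm{dist}_H(x,A) \leq 1\}$ is bounded (because $A$ is compact), and attraction of every bounded $B$ forces $G(t,B) \subset B_0$ once $t$ is large enough, giving (i). For (ii), given a bounded $B$ and $\varepsilon > 0$, attraction produces $t_0$ with $\bigcup_{t \geq t_0} G(t,B) \subset A + B(0,\varepsilon)$; properties (2) and (6) of Lemma \ref{Kuratowski-prop} then bound $\kappa\left(\bigcup_{t \geq t_0} G(t,B)\right) \leq \kappa(A) + \kappa(B(0,\varepsilon)) \leq 2\varepsilon$, establishing $\omega$-limit compactness.

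For sufficiency, set $A := \omega(B_0)$. Corollary \ref{corollary-1} delivers nonemptiness and compactness of $A$, together with $\mathrm{dist}_H(G(t,B_0),A) \to 0$. For an arbitrary bounded $B$, pick $t_0$ with $G(t_0,B) \subset B_0$; the non-strict semiflow inclusion gives $G(t_0+s,B) \subset G(s,G(t_0,B)) \subset G(s,B_0)$ for all $s \geq 0$, from which $\mathrm{dist}_H(G(t_0+s,B),A) \to 0$ as $s \to \infty$, so attraction of $B$ follows at once.

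The last and subtler step is $A \subset G(t,A)$. Fix $t \geq 0$ and $x \in A$; by definition of $\omega(B_0)$ there exist $t_n \to \infty$, $y_n \in B_0$, and $\xi_n \in G(t_n,y_n)$ with $\xi_n \to x$ strongly. For $n$ so large that $t_n > t$, the non-strict inclusion $G(t_n,y_n) \subset G(t,G(t_n-t,y_n))$ produces $z_n \in G(t_n-t,y_n)$ satisfying $\xi_n \in G(t,z_n)$. Since $z_n \in G(t_n-t,B_0)$ and $t_n-t \to \infty$, asymptotic compactness (equivalent to $\omega$-limit compactness by Lemma \ref{lemma:limit-asymptotic}) delivers a subsequence with $z_{n_k} \to z$ strongly in $H$, and the reasoning in the proof of Lemma \ref{lemma:asymptoimpliesomega} places $z \in \omega(B_0) = A$. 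Since $(z_{n_k},\xi_{n_k}) \to (z,x)$ strongly and $\xi_{n_k} \in G(t,z_{n_k})$, closedness of $G$ yields $x \in G(t,z) \subset G(t,A)$. The main obstacle is exactly this book-keeping forced by the non-strict inclusion: one has to factor $\xi_n$ through an auxiliary sequence $\{z_n\}$ and extract a strongly convergent subsequence of the latter before the strong-strong closedness of $G$ can be applied.
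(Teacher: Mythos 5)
Your proof is correct, and the crucial step---the negative semi-invariance $A\subset G(t,A)$, where the non-strict inclusion forces you to factor $\xi_n$ through an auxiliary sequence $z_n\in G(t_n-t,y_n)$, extract a strongly convergent subsequence by asymptotic compactness, and only then invoke strong-strong closedness---is exactly the paper's Step 4. Where you genuinely diverge is in the choice of candidate set: you take the classical $\omega$-limit set $A=\omega(B_0)=\bigcap_{t\geq 0}\overline{\bigcup_{s\geq t}G(s,B_0)}$ built from \emph{strong} closures, so that membership in $A$ is immediately characterized by strongly convergent sequences and Corollary \ref{corollary-1} hands you compactness and attraction for free. The paper instead defines $A$ via weak sequential closures $\overline{\,\cdot\,}^{ws}$, which costs it an extra nontrivial step: the sequential characterization of $A$ requires a diagonal extraction through a metrizability argument for weakly compact sets in a separable subspace, plus the invariance of the Kuratowski measure under weak sequential closure (Lemma \ref{lemme1.2}). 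The paper does this deliberately---it states it wants a proof ``relying on weak topology arguments'' to mirror the condition ($NW$) machinery used in the applications, and then observes in the Remark following the theorem that the two candidate sets coincide anyway. Your route is the more elementary one (essentially the Melnik--Valero style argument adapted to closed, non-strict semiflows), buys a shorter proof, and loses only the thematic connection to the weak-topology framework; your necessity argument via $\kappa(A+B(0,\varepsilon))\leq\kappa(A)+\kappa(B(0,\varepsilon))\leq 2\varepsilon$ is likewise sound and is the argument the paper delegates to the literature.
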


\begin{proof} The argument below generalizes that in \cite{Zhong-Yang-Sun} applied to norm-to-weak continuous semigroups. The proof that the existence of the global attractor implies (i) and (ii) is the same as that provided in \cite{Zhong-Yang-Sun}.

We shall prove the sufficiency of (i) and (ii) for existence a global attractor for closed multivalued semigroups.

\noindent \textbf{Step 1.} We define the candidate set for a global attractor by
\begin{eqnarray}
     A = \bigcap_{t\geq 0}\overline{\bigcup_{s\geq t}G(s, B_0)}^{ws}
\end{eqnarray}
where $B_0$ is a bounded absorbing set for $G$. We first prove that
\begin{eqnarray} \label{attractor-sequence}
     \xi\in A \Longleftrightarrow \mathrm{\,\,there\,\,exist\,\,} t_n\to\infty\,\mbox{and}\, \xi_n\in G(t_n,B_0)\,\mbox{such that}\,
     \xi_n\to\xi \mathrm{\ weakly\,\, in \,\,} H.
\end{eqnarray}
$(\Leftarrow)$ For every $\tau\geq 0$ and every $t_n\geq \tau$ we have $\xi_n\in G(t_n,B_0)$, and moreover
$\xi_n\in \bigcup_{t\geq \tau}G(t, B_0)$. From the weak convergence $\xi_n\to\xi$ it follows that for every $\tau \geq 0$,
$\xi\in \overline{\bigcup_{t\geq \tau}G(t, B_0)}^{ws}$. Thus $\xi\in A$.

\noindent
$(\Rightarrow)$ For $\xi\in A$ we have $\xi\in \overline{\bigcup_{t\geq n}G(t, B_0)}^{ws}$ for every $n\in \mathbb{N}$. From the definition of weak sequential closedness it follows that for every $n\in \mathbb{N}$ there exist sequences $\{t_n^k\}_{k\in\mathbb{N}}$ such that $t_n^k\geq n$ and
$\xi_n^k\in G(t_n^k,B_0)$ with $\xi_n^k\to \xi$ weakly in $H$ as $k\to\infty$. Consider the set
$K=\{\xi_n^k: k,n=1,2,3,...\}$. Since $G$ is $\omega$-limit compact, the set $\bar{K}^{ws}$ is weakly compact in $H$. Observe that $U=\overline{span\, K}$ is a separable and closed subspace of $H$ and $\bar{K}^{ws}$ is weakly compact in $U$. Hence, from Theorem 3.6.24 in \cite{denkowski2003} it follows that $\bar{K}^{ws}$ is metrizable with a metric $d(\cdot,\cdot)$ generating the weak topology in $\bar{K}^{ws}$. Therefore, for every $n\in\mathbb{N}$ there exists $\xi_n^{k_n}$ such that $d(\xi_n^{k_n},\xi)<\frac{1}{n}$. Hence we have $\xi_n^{k_n} \to \xi$ weakly in $H$ as $n\to\infty$ with $\xi_n^{k_n}\in G(t_n^{k_n},B_0)$ and $t_n^{k_n}\to\infty$ as $n\to\infty$, which ends the proof.

\noindent\textbf{Step 2.} $A$ is nonempty and compact. We have, by $\omega$-limit compactness of $G$ and by
Lemma \ref{lemme1.2},
\begin{eqnarray}
    \kappa\left(\overline{\bigcup_{t\geq \tau}G(t, B_0)}^{ws}\right) =
    \kappa\left(\bigcup_{t\geq \tau}G(t, B_0)\right) \to 0, \,\, \tau\to\infty.
\end{eqnarray}
Since the sets $\overline{\bigcup_{t\geq \tau}G(t, B_0)}^{ws}$ are nonempty, bounded and closed in $H$, we can apply Lemma \ref{Kuratowski-prop} (5) to get the claims.

\noindent\textbf{Step 3.} Attraction property. Since every bounded set in $H$ is absorbed by $B_0$ after some time, it suffices to prove that $\mathrm{dist}_H(G(t,B_0), A) \to 0$, $t\to\infty$. Assume, to the contrary, that there exist $t_n\to\infty$ and $\xi_n\in G(t_n,B_0)$ such that $\mathrm{dist}_H(\xi_n, A) \geq\epsilon>0$.
By the $\omega$-limit compactness property, there exists a convergent subsequence,
$\xi_\mu\to \xi$ strongly in $H$. As moreover $\xi_\mu\to\xi$ weakly in $H$, from (\ref{attractor-sequence}) we get $\xi\in A$, which gives a contradiction.

\noindent\textbf{Step 4.} We prove that $A\subset G(t,A)$ for all $t\geq 0$. Let $x\in A$ and $t>0$. We shall prove that
$x\in G(t,p)$ for some $p\in A$. In view of (\ref{attractor-sequence}), there exist $t_n\to\infty$ and $\xi_n\in G(t_n,B_0)$ such that $\xi_n\to x$ weakly in $H$. By the $\omega$-limit compactness property, there exists a subsequence such that $\xi_\mu\to \xi$ strongly in $H$ and $\xi=x$.
 We have,
$$\xi_\mu\in G(t_\mu,B_0) = G(t+(t_\mu-t),B_0) \subset G(t,G(t_\mu-t, B_0)),$$
whence there exist $z_\mu\in B_0$ such that $\xi_\mu \in G(t, G(t_\mu-t,z_\mu))$, and
$p_\mu\in G(t_\mu-t,z_\mu)$ such that $\xi_\mu \in G(t, p_\mu)$. By the $\omega$-limit compactness it follows that for a subsequence of $\{p_\mu\}$ denoted by the same symbol we have $p_\mu \to p$ in $H$. From (\ref{attractor-sequence}) it follows that and that $p\in A$.
Since $\xi_\mu \to \xi$ and $p_\mu\to p$ strongly in $H$ with $\xi_\mu\in G(t,p_\mu)$ from the fact that $G$
is closed it follows that $\xi\in G(t,p)$. Thus $x\in G(t,A)$, whence $A\subset G(t,A)$.
\end{proof}

\begin{remark}
Under the assumptions of Theorem \ref{thm-main1} if we define
\begin{eqnarray}
    \tilde{A} = \bigcap_{t\geq 0}\overline{\bigcup_{s\geq t}G(s, B_0)},
\end{eqnarray}
then $\tilde{A}=A$. Indeed, from Definition \ref{def-ws-cl}, for any set $B$ we have, $\bar{B}\subset \bar{B}^{ws}$. Hence
$\tilde{A}\subset A$. Assume that there exists $\xi\in A\setminus\tilde{A}$. By (\ref{attractor-sequence}) there exists a sequence $t_n\to\infty$ and $\xi_n\in G(t_n,B_0)$ such that $\xi_n\to \xi$ weakly in $H$. From $\omega$-limit compactness it follows that there exists a subsequence $\xi_\mu$ of the sequence $\xi_n$ such that $\xi_\mu \to \xi$ strongly in $H$, and this means that $\xi\in \tilde{A}$, which gives a contradiction. Thus, $A=\tilde{A}$. Moreover, the global attractor is the minimal closed set attracting every bounded set in $H$, cf. \cite{Melnik-1998}.
\end{remark}

   In Section \ref{sec:applications} we will present two examples for which it is most convenient to obtain the global attractor existence for the problems governed by subdifferential inclusions through the following corollary of Theorem \ref{thm-main1}.

\begin{corollary}\label{corollary:main}
If the multivalued semiflow $G$ on a uniformly convex Banach space $H$ has a bounded absorbing set, satisfies the flattening condition and condition ($NW$) then there exists a global attractor for $G$.
\end{corollary}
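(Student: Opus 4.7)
The plan is to verify, one by one, the three hypotheses of Theorem \ref{thm-main1} and then invoke it. Recall that Theorem \ref{thm-main1} guarantees the existence of a global attractor for an $m$-semiflow $G$ on a Banach space provided that $G$ is closed, possesses a bounded absorbing set, and is $\omega$-limit compact.

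The bounded absorbing set is a direct assumption of the corollary, so nothing needs to be done there. For the $\omega$-limit compactness, I would appeal to Lemma \ref{thm-flat-limit}, which asserts that in any Banach space the flattening condition implies $\omega$-limit compactness; since the flattening condition is an assumption, this hypothesis of Theorem \ref{thm-main1} is in hand. For closedness, I would invoke Lemma \ref{lemma-closed}: condition ($NW$) implies that $G$ is demiclosed, and the paper's remark immediately following the definition of demiclosedness records that every demiclosed $m$-semiflow is closed. All three hypotheses of Theorem \ref{thm-main1} are therefore met, and the global attractor for $G$ exists.

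One small observation worth flagging: uniform convexity of $H$ plays no role in this forward direction. The converse implication in the summary table at the end of Subsection on compactness (flattening $\Leftarrow$ $\omega$-limit compactness) is what requires uniform convexity, but we need only the easy direction here. The uniform convexity hypothesis is presumably stated because it is the natural ambient setting for the examples in Section \ref{sec:applications}, where the flattening condition is itself verified from finite-dimensional spectral truncations.

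There is no real obstacle: the corollary is essentially a bookkeeping consequence that assembles three previously proved implications (flattening $\Rightarrow$ $\omega$-limit compactness, ($NW$) $\Rightarrow$ demiclosedness $\Rightarrow$ closedness, and the main abstract theorem) into a single ready-to-apply criterion whose hypotheses are precisely those that come out of the standard a priori estimates for evolution inclusions governed by Clarke subdifferentials.
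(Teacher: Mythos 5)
Your proof is correct and follows exactly the route the paper intends: the corollary is stated as a consequence of Theorem \ref{thm-main1} obtained by combining Lemma \ref{thm-flat-limit} (flattening implies $\omega$-limit compactness) and Lemma \ref{lemma-closed} (condition ($NW$) implies demiclosedness, hence closedness). Your side observation that uniform convexity is not actually used in this direction is also accurate.
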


Now we present some additional consequences of Theorem \ref{thm-main1}. The following argument shows, that Theorem 4 from \cite{Melnik-1998} can be obtained as a corollary of Theorem~\ref{thm-main1}.
\begin{theorem}
If $G$ is a closed multivalued semiflow on a Banach space $H$, and there exists a compact set $K$ attracting all bounded sets then $G$ has a global attractor. It is a minimal closed set attracting bounded sets in $H$.
\end{theorem}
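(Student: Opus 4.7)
The plan is to reduce to Theorem~\ref{thm-main1} by verifying its two hypotheses from the existence of the compact attracting set $K$, and then to deduce the minimality claim from the sequential characterization of $A$ established in that proof.

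To produce a bounded absorbing set, I would take $B_0 = \{x \in H : \mathrm{dist}_H(x,K) \leq 1\}$, which is bounded since $K$ is compact. For any bounded $B \subset H$, the hypothesis $\mathrm{dist}_H(G(t,B), K) \to 0$ yields $t_0$ with $G(t,B) \subset B_0$ for $t \geq t_0$, so hypothesis (i) of Theorem~\ref{thm-main1} holds. For $\omega$-limit compactness, I would fix a bounded $B$ and $\epsilon > 0$, choose $\tau$ so that $G(t,B) \subset K + \overline{B(0,\epsilon)}$ for all $t \geq \tau$, and use the subadditivity of $\kappa$ under Minkowski sums (property (6) of Lemma~\ref{Kuratowski-prop}) together with $\kappa(K) = 0$ to obtain
\[
\kappa\!\left(\bigcup_{t \geq \tau} G(t,B)\right) \leq \kappa(K) + \kappa(\overline{B(0,\epsilon)}) \leq 2\epsilon.
\]
Since $\epsilon$ is arbitrary, hypothesis (ii) holds, and Theorem~\ref{thm-main1} produces a global attractor $A$.

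For the minimality claim, let $K'$ be any closed subset of $H$ attracting every bounded set. Given $x \in A$, I would invoke the characterization (\ref{attractor-sequence}) from Step~1 of the proof of Theorem~\ref{thm-main1} to obtain $t_n \to \infty$ and $\xi_n \in G(t_n, B_0)$ with $\xi_n \to x$ weakly, then extract a subsequence via $\omega$-limit compactness (as in Step~3) to upgrade the convergence to strong convergence. The attraction of $B_0$ by $K'$ gives $\mathrm{dist}_H(\xi_n, K') \to 0$, so $x \in K'$ by closedness of $K'$, which proves $A \subset K'$.

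There is essentially no serious obstacle: the only point requiring attention is that the above Minkowski-sum estimate depends on $H$ being a Banach space (property (6) of Lemma~\ref{Kuratowski-prop}), which is indeed assumed in the statement. Everything else is immediate from the machinery of Theorem~\ref{thm-main1} and its proof.
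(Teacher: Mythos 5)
Your proposal is correct and follows essentially the same route as the paper: both reduce to Theorem~\ref{thm-main1} by taking an $\epsilon$-neighbourhood of $K$ as the bounded absorbing set and estimating $\kappa$ of that neighbourhood by $2\epsilon$ (the paper via monotonicity, you via the Minkowski decomposition $K+\overline{B(0,\epsilon)}$, which is the same underlying estimate). The only difference is that you spell out the minimality argument explicitly, whereas the paper defers it to a citation of Melnik--Valero; your version of that argument is sound.
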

\begin{proof}
 As $K$ attracts all bounded sets $B$ in $H$, its $\epsilon$-neighbourhood ${\cal O}_\epsilon(K)$ is a bounded absorbing set. Let $G(t,B)\subset {\cal O}_\epsilon(K)$ for $t\geq t(B)$. Then
\begin{eqnarray}
      \kappa\left(\bigcup_{t\geq t(B)}G(t,B)\right) \leq \kappa({\cal O}_\epsilon(K)) \leq 2\epsilon.
\end{eqnarray}
Thus, $G$ is closed, $G$ has a bounded absorbing set and is $\omega$-limit compact. From
Theorem~\ref{thm-main1} it follows that $G$ has a global attractor. The minimality property is then well known, cf. \cite{Melnik-1998}.
\end{proof}

\begin{definition}
The $m$-semiflow $G$ is point dissipative if there exists a bounded set $B_0\subset H$ such that for every $x\in H$ we have $\mathrm{dist}_H(G(t,x),B_0)\to 0$ as $t\to\infty$, i.e. $B_0$ attracts every point of $H$.
\end{definition}
The necessity in the following theorem valid for strict closed $m$-semiflows follows from Lemma 2 in \cite{Melnik-2008}.
\begin{theorem} \label{thm-main3}
Let $G$ be a strict closed multivalued semiflow on a Banach space $H$.
Then there exists a global attractor for $G$ if and only if
\begin{itemize}
\item[(i)] $G$ is point dissipative.
\item[(ii)] $G$ is $\omega$-limit compact.
\end{itemize}
\noindent The attractor is minimal among all closed sets attracting all bounded sets of $H$.
\end{theorem}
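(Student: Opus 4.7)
The necessity is noted to follow from Lemma~2 of \cite{Melnik-2008}; I focus on sufficiency. The plan is to use (i), (ii), strictness and closedness to produce a bounded absorbing set, after which Theorem~\ref{thm-main1} delivers the global attractor and the minimality claim follows exactly as in the Remark after Theorem~\ref{thm-main1}.

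Let $B_0$ be a bounded point-absorbing set. By (ii), $\kappa\bigl(\bigcup_{t\geq\tau}G(t,B_0)\bigr)\to 0$, so the tail
\[
V \;=\; \bigcup_{t\geq \tau_0} G(t,B_0)
\]
is bounded for some $\tau_0>0$ (a set of infinite Kuratowski measure is by definition unbounded). Strictness immediately gives positive invariance
\[
G(s,V) \;=\; \bigcup_{t\geq \tau_0} G(s+t,B_0) \;\subset\; V,\qquad s\geq 0,
\]
and Corollary~\ref{corollary-1} places the compact set $K=\omega(B_0)$ inside $\overline{V}$. The decisive step is to show that $\overline{V}$, or a bounded enlargement of it, absorbs every bounded $B\subset H$. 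Arguing by contradiction, assume $\varepsilon>0$, $t_n\to\infty$, $x_n\in B$ and $y_n\in G(t_n,x_n)$ satisfy $\mathrm{dist}_H(y_n,\overline{V})\geq\varepsilon$; by (ii) applied to $B$, along a subsequence $y_{n_k}\to y^{\star}\notin\overline{V}$. For each $x_{n_k}$ point dissipativity supplies $\sigma_{n_k}$ with $G(t,x_{n_k})\subset\mathcal{O}_1(B_0)$ for $t\geq\sigma_{n_k}$; after a diagonal extraction handling the case $\sigma_{n_k}>t_{n_k}$, strictness yields $y_{n_k}\in G(t_{n_k}-\sigma_{n_k},\eta_{n_k})$ with $\eta_{n_k}\in\mathcal{O}_1(B_0)$ and $t_{n_k}-\sigma_{n_k}\to\infty$. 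Applying (ii) to the bounded set $\mathcal{O}_1(B_0)$ and closedness of $G$, the point $y^{\star}$ is forced into $\omega(\mathcal{O}_1(B_0))$, which lies inside the positively invariant bounded enlargement $V'=\bigcup_{t\geq\tau_0'}G(t,\mathcal{O}_1(B_0))$; shrinking the neighbourhood to $\mathcal{O}_{1/k}(B_0)$ and diagonalising in $k$ finally places $y^{\star}$ in $\overline{V}$, contradicting $\mathrm{dist}_H(y^{\star},\overline{V})\geq\varepsilon$.

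The principal obstacle is exactly the passage from the pointwise absorption times $\sigma(x)$ supplied by point dissipativity to a uniform bound over bounded initial data. Strictness of the $m$-semiflow is the essential tool: it permits one to shift each trajectory by its individual $\sigma(x_n)$ and then work with a tail starting inside a fixed bounded neighbourhood of $B_0$, where $\omega$-limit compactness reasserts control and closedness passes the limit inside $G$. Once a bounded absorbing set is in hand, the closedness of $G$ and $\omega$-limit compactness are exactly the hypotheses of Theorem~\ref{thm-main1}, which yields the global attractor; minimality follows from the construction of $A$ via weak sequential closures of the forward orbit of the absorbing set, as in the proof of Theorem~\ref{thm-main1}.
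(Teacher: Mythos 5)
Your plan --- manufacture a bounded absorbing set from (i), (ii), strictness and closedness, and then invoke Theorem \ref{thm-main1} --- targets the right statement (it is exactly the equivalence recorded in the corollary following the theorem), but note that the paper does not prove this direction at all: its proof handles only the necessity (via Theorem \ref{thm-main1}) and cites Lemma 2 of the Melnik--Valero addendum for the existence. So you are reconstructing an omitted external argument, and your reconstruction has a genuine gap at precisely the step you flag as ``the principal obstacle''. Point dissipativity supplies, for each single point $x$, a time $\sigma(x)$ with $G(t,x)\subset\mathcal{O}_1(B_0)$ for $t\geq\sigma(x)$, but it gives no control of $\sigma$ along a sequence $x_n\in B$: the function $x\mapsto\sigma(x)$ may blow up arbitrarily fast on $B$. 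Since the times $t_n$ are dictated by your contradiction hypothesis (you do not get to choose them), it may happen that $\sigma(x_{n_k})>t_{n_k}$ for \emph{every} $k$, in which case the factorization $y_{n_k}\in G(t_{n_k}-\sigma_{n_k},\eta_{n_k})$ with $\eta_{n_k}\in\mathcal{O}_1(B_0)$ does not exist for any term of the sequence, and no ``diagonal extraction'' can create it. This is the classical obstruction to upgrading point dissipativity to bounded dissipativity; it is normally overcome by an open-cover argument on a compact set using upper semicontinuity of $x\mapsto G(t,x)$, a hypothesis you do not have here (only closedness of the graph is assumed).

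There is a second, independent soft spot at the end: even when the factorization is available and $t_{n_k}-\sigma_{n_k}\to\infty$, you only conclude $y^{\star}\in\omega(\mathcal{O}_1(B_0))$, and this set has no a priori reason to lie in $\overline{V}=\overline{\bigcup_{t\geq\tau_0}G(t,B_0)}$; the proposed repair (``shrink to $\mathcal{O}_{1/k}(B_0)$ and diagonalise in $k$'') changes the absorption times at every stage and so reintroduces the first problem, and the inclusion $\bigcap_{k}\omega(\mathcal{O}_{1/k}(B_0))\subset\overline{V}$ is asserted rather than proved. (Also, closedness is a fixed-$t$ property, so the sub-case where $t_{n_k}-\sigma_{n_k}$ stays bounded would need separate treatment.) The early steps --- boundedness and positive invariance of the tail $V$ via $\kappa\to 0$ and strictness, and $\omega(B_0)\subset\overline{V}$ --- are fine; what is missing is the uniformization of the absorption time over bounded sets, and that is the entire content of the lemma the paper cites instead of proving.
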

\begin{proof}
From Theorem \ref{thm-main1} it follows that if $G$ has a global attractor then $G$ is $\omega$-limit compact and there
exists a bounded absorbing set. This last property implies that $G$ is point dissipative.
The existence of global attractor follows from Lemma 2 in \cite{Melnik-2008} (see also Lemma 1.4 in \cite{Zgurovsky2012}).
\end{proof}
As a simple consequence of Theorems \ref{thm-main1} and \ref{thm-main3} we formulate the following corollary
\begin{corollary}
Let $G$ be a strict closed multivalued semiflow on a Banach space $H$, that is moreover $\omega$-limit compact. Then $G$ is point dissipative if and only if it has a bounded absorbing set.
\end{corollary}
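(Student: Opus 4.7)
The plan is to use the two theorems just proved as a bridge, with the existence of a global attractor as the common intermediate property. Both implications are essentially direct; no new estimates or constructions are required.

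First I would dispose of the easy direction: if $G$ admits a bounded absorbing set $B_0$, then for any point $x\in H$, applying the absorbing property to the bounded set $\{x\}$ gives a time $t_0$ with $G(t,x)\subset B_0$ for all $t\geq t_0$. Hence $\mathrm{dist}_H(G(t,x),B_0)=0$ for $t\geq t_0$, so $G$ is point dissipative. This step is completely formal.

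For the nontrivial direction, I would chain Theorem \ref{thm-main3} with Theorem \ref{thm-main1}. Suppose $G$ is point dissipative; combined with the standing hypotheses that $G$ is a strict closed multivalued semiflow on the Banach space $H$ and is $\omega$-limit compact, conditions (i) and (ii) of Theorem \ref{thm-main3} are satisfied, so $G$ admits a global attractor $A$. But then the necessity part of Theorem \ref{thm-main1}, applied to the same (closed) semiflow $G$, yields the existence of a bounded absorbing set. This closes the equivalence.

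There is no genuine obstacle: the only subtle point is to notice that the hypotheses of the corollary — strictness, closedness, $\omega$-limit compactness on a Banach space — are precisely what is needed to invoke Theorem \ref{thm-main3} in one direction and Theorem \ref{thm-main1} in the other, with the existence of a global attractor serving as the common hinge between point dissipativeness and the bounded-absorbing-set property.
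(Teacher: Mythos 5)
Your proof is correct and follows exactly the route the paper intends: the corollary is stated there as a direct consequence of Theorems \ref{thm-main1} and \ref{thm-main3}, with the existence of a global attractor as the hinge, and your direct verification of the easy direction (absorbing the singleton $\{x\}$) is a harmless shortcut.
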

Results of Melnik and Valero (\cite{Melnik-1998}, \cite{Melnik-2008}, \cite{Zgurovsky2012}) show that if we assume that
$m$-semiflow is point dissipative without assuming the existence of bounded absorbing set, then the compact attractor exists either if the semiflow is strict (see above Theorem \ref{thm-main3}) or if it is upper semicontinuous with respect to the strong-strong topology (see Theorem 1 in \cite{Melnik-2008}). The following theorem shows that another situation where we do not have to assume the existence of a bounded absorbing set is when an $m$-semiflow satisfies 
the condition ($NW$) or (in reflexive case) it is demiclosed.
\begin{theorem}\label{thm:pointd}
Let $G$ be a multivalued semiflow on a Banach space $H$ such that the following conditions hold
\begin{itemize}
\item[(i)] $G$ satisfies the flattening condition;
\item[(ii)] $G$ is point dissipative;
\item[(iii)] at least one of the following two conditions holds:
\begin{itemize}
\item[(A)] $G$ satisfies the condition ($NW$);
\item[(B)] $H$ is reflexive and $G$ is demiclosed.
\end{itemize}
\end{itemize}
Then $G$ has a bounded absorbing set, and, in consequence, $G$ has a compact global attractor.
\end{theorem}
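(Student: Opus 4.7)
The plan is to reduce to Theorem~\ref{thm-main1} and spend the entire work on producing a bounded absorbing set from point dissipativeness. Lemma~\ref{thm-flat-limit} turns (i) into $\omega$-limit compactness; condition (A) yields demiclosedness by Lemma~\ref{lemma-closed}, and (B) assumes demiclosedness outright, so in either case $G$ is closed. Hence once a bounded absorbing set is exhibited, Theorem~\ref{thm-main1} supplies the attractor and nothing more is needed.

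For the candidate absorbing set, let $B_0$ be the point-attracting bounded set from (ii), put $B_1 := \{y \in H : \mathrm{dist}_H(y,B_0) \leq 1\}$, and set $C := \omega(B_1)$. By $\omega$-limit compactness applied to the bounded set $B_1$ and Corollary~\ref{corollary-1}, $C$ is nonempty and compact and satisfies $\mathrm{dist}_H(G(t,B_1),C) \to 0$. Define $B^{*} := \{y : \mathrm{dist}_H(y,C) \leq 1\}$; this is bounded since $C$ is compact. By Corollary~\ref{corollary-1} applied to an arbitrary bounded $B$, the statement ``$B^{*}$ absorbs $B$'' reduces to the single key claim that $\omega(B) \subset C$ for every bounded $B$.

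To prove the key inclusion, fix $\xi \in \omega(B)$; using $\omega$-limit compactness and the weak-sequential-closure argument from Step~1 of the proof of Theorem~\ref{thm-main1}, write $\xi = \lim_n \xi_n$ strongly with $\xi_n \in G(t_n,x_n)$, $x_n \in B$, $t_n \to \infty$. Point dissipativeness supplies for each $n$ the finite entry time $T_n := \inf\{T \geq 0 : G(s,x_n) \subset B_1\text{ for all }s \geq T\}$. Passing to a subsequence, one of two regimes holds. If $t_n - T_n \to +\infty$, the one-sided inclusion $G(t_n,x_n) \subset G(t_n-T_n, G(T_n,x_n))$ produces $p_n \in G(T_n,x_n) \subset B_1$ with $\xi_n \in G(t_n-T_n, p_n)$, and the definition of $\omega$-limit immediately gives $\xi \in \omega(B_1) = C$. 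In the remaining regime $T_n - t_n$ is bounded below (the trajectory of $x_n$ has not yet entered $B_1$ at time $t_n$), and the conditions in (iii) have to be used: the limit $\xi$ itself, as a single point of $H$, enjoys its own finite time $T(\xi)$ with $G(T(\xi),\xi) \subset B_1$, and applying $(NW)$ at the fixed time $T(\xi)$ to $\xi_n \to \xi$ (directly in case (A), or in case (B) by extracting a weakly convergent subsequence from the bounded image and closing against the demiclosed graph) transports this entry back to the approximants so that, after a bounded time-shift, the situation is reduced to the first regime.

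The main obstacle is this second regime. The difficulty is that the $m$-semiflow inclusion $G(t+s,x) \subset G(t,G(s,x))$ is one-sided, so points of $G(T(\xi),\xi_n)$ cannot be identified with points of the trajectory $G(t_n+T(\xi),x_n)$, and the original trajectory cannot simply be ``fast-forwarded'' into $B_1$. The proof must exploit $(NW)$ (weakly compact values plus strong-to-weak upper semicontinuity) or its case-(B) substitute to perform a weak-limit passage whose target lies in $B_1$ (which, replaced by its closed convex hull if necessary, is weakly closed), and then close the argument using the negative semi-invariance $\omega(B) \subset G(t,\omega(B))$ which itself follows from closedness and $\omega$-limit compactness via a standard diagonal extraction.
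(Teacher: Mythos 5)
Your overall strategy is the right one --- reduce to Theorem~\ref{thm-main1} via Lemma~\ref{thm-flat-limit} and Lemma~\ref{lemma-closed}, and spend all the effort on manufacturing a bounded absorbing set of the form (a neighbourhood of) $\omega(\mathcal{O}(B_0))$ --- and you have correctly located where the difficulty sits. But the proposal stops exactly at that difficulty without resolving it, so there is a genuine gap. In your ``second regime'' you propose to apply $(NW)$ (or demiclosedness) at a fixed time to a strongly convergent sequence and thereby ``transport the entry into $B_1$ back to the approximants.'' What this actually yields is only a \emph{weak} limit: from $\xi_n\to\xi$ strongly and $z_n\in G(T,\xi_n)$ you get (along a subsequence) $z_n\rightharpoonup z$ with $z\in G(T,\xi)\subset\mathcal{O}_{\varepsilon}(B_0)$. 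Weak convergence of $z_n$ to a point of $B_1$ does not place the $z_n$ themselves in any fixed neighbourhood of $B_0$ in norm (and passing to the closed convex hull of $B_1$ does not help, since you need the $z_n$, not their weak limit, to lie in a set whose $\omega$-limit you control). Without a norm estimate on $z_n-z$ you cannot conclude that the trajectory through $z_n$ eventually lands near $\omega(B_1)$, and the argument does not close.

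The missing idea --- and the heart of the paper's proof --- is that the flattening condition is used a second time, beyond merely supplying $\omega$-limit compactness, precisely to upgrade this weak convergence to a norm estimate. The paper argues by contradiction with $x_n\in G(t_n,y_n)$, $x_n\notin B_2$, splits each trajectory at time $t_n/2$ to get midpoints $\xi_n\to\xi$ strongly (asymptotic compactness), applies point dissipativity to the \emph{single} limit point $\xi$ over a \emph{fixed} time $T$, and then decomposes $z_n=P_Ez_n+(I-P_E)z_n$: the finite-dimensional parts converge strongly, while $\|(I-P_E)z_n\|\le\varepsilon/4$ uniformly by flattening, and weak lower semicontinuity of the norm controls $\|z-z^1\|$. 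Together these give $\|z_n-z\|\le 3\varepsilon/4$, hence $z_n\in B_1$ in norm, after which $x_n\in G(t_n/2-T,z_n)$ and asymptotic compactness force $x_n$ into $\mathcal{O}_\varepsilon(\omega(B_1))$, the desired contradiction. Note also that your use of the individual entry times $T_n$ of the trajectories of $x_n$ is a dead end: point dissipativity gives no uniform control of $T_n$ over $x_n\in B$ (that lack of uniformity is exactly why the theorem is not trivial), so nothing forces your first regime $t_n-T_n\to\infty$ to occur, and the entire content of the theorem lives in the regime you leave open.
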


\begin{proof}
Let $B_0$ be a bounded set that attracts every point in $H$. Fix $\varepsilon >0$. We define
$B_1=\C{O}_\varepsilon(B_0)$ and $B_2=\C{O}_\varepsilon(\omega(B_1)).$ Obviously $B_2$ is bounded. We will show that $B_2$ is absorbing.
Assume, for the sake of contradiction, that there exists a bounded set $B$ and sequences $t_n\to\infty$ and $x_n\in G(t_n,B)$ such that $x_n\not\in B_2$ for all $n\in\mathbb{N}$. Hence, we can find a sequence $\{y_n\}\subset B$ such that $x_n\in G(t_n,y_n)$. Moreover $x_n\in G\left(\frac{t_n}{2},G\left(\frac{t_n}{2},y_n\right)\right)$ and hence there exists a sequence $\{\xi_n\}$ such that
\begin{equation}\label{eq:weak1}
\xi_n\in G\left(\frac{t_n}{2},y_n\right)
\end{equation}
 and \begin{equation}\label{eq:weak2}
 x_n \in G\left(\frac{t_n}{2},\xi_n\right).
 \end{equation}
 Since by Lemmata \ref{lemma:limit-asymptotic} and \ref{thm-flat-limit} $G$ is asymptotically compact, from (\ref{eq:weak1}) it follows that,
  for a subsequence still denoted by $n$, we have $\xi_n\to \xi$ strongly in $H$. We denote $S=\{\xi_n\}_{n=1}^\infty\cup\{\xi\}$. Obviously the set $S$
  is bounded in $H$, hence, from a flattening condition there exists $t_1>0$, $R>0$ and a finite dimensional subspace $E\subset H$
  as well as bounded projector $P_E:H\to E$  such that we have
   $P_E\left( \bigcup_{t\geq t_1}G(t,S)\right) \subset B(0,R)$ and \begin{equation}\label{eq:weak3}(I-P_E)\bigcup_{t\geq t_1}G(t,S)\subset B\left(0,\frac{\varepsilon}{4}\right).\end{equation} From point dissipativity of $G$ it follows that
   we can choose $t_2>0$ such that \begin{equation}\label{eq:weak4}\bigcup_{t\geq t_2}G(t,\xi)\subset \C{O}_{\frac{\varepsilon}{4}}(B_0).\end{equation}
   Now let $T=\max\{t_1,t_2\}$. From (\ref{eq:weak2}) it follows that, for a sufficiently large $n$ we have $x_n\in G\left(\frac{t_n}{2}-T,G(T,\xi_n)\right)$ and hence there exists a sequence $z_n\in G(T,\xi_n)$ such that \begin{equation}\label{eq:weak5}x_n\in G\left(\frac{t_n}{2}-T,z_n\right).\end{equation} We can decompose $z_n=P_Ez_n + (I-P_E)z_n$. Since $\{P_Ez_n\}$ is a bounded sequence in the finite dimensional space $E$, then, for a subsequence, $P_Ez_n\to z^1$ strongly in $H$.
Now we proceed separately for cases $(iii)(A)$ and $(iii)(B)$.

If $(iii)(A)$ holds, then, from the condition ($NW$) it follows that, for a subsequence
we have $z_n\to z$ weakly in $H$ with $z\in G(T,\xi)$. Moreover $(I-P_E)z_n\to z-z^1$ weakly in $H$ and, from (\ref{eq:weak3}), by weak lower semicontinuity of the norm it follows that $\|z-z^1\|\leq \frac{\varepsilon}{4}$.

If in turn $(iii)(B)$ holds, then from (\ref{eq:weak3}) it follows that $\|(I-P_E)z_n\|\leq \frac{\varepsilon}{4}$, and by reflexivity of $H$, for a subsequence we have $(I-P_E)z_n\to z^2$ weakly in $H$. From weak lower semicontinuity of the norm it follows that $\|z^2\|\leq \frac{\varepsilon}{4}$. Moreover, we have $z_n\to z$ weakly in $H$, where $z=z^1+z^2$. From demiclosedness of $G$ it follows that $z\in G(T,\xi)$.

We continue the proof for both cases. We have, for sufficiently large $n$

$$
\|z_n-z\| \leq \|(I-P_E)z_n\| + \|P_Ez_n-z^1\|+\|z-z^1\|\leq \frac{3\varepsilon}{4}.
$$
From (\ref{eq:weak4}) it follows that, for sufficiently large $n$ we have $z_n\in B_1$. Hence, by (\ref{eq:weak5}) and asymptotic compactness of $G$ we get that, for a subsequence, $x_n\to x$ strongly where $x\in \omega(B_1)$. Therefore, for sufficiently large $n$ we have $x_n\in B_2$, a contradiction.
\end{proof}

As a simple consequence of above theorem we formulate the following Corollary
\begin{corollary}
Let $G:\mathbb{R}^+\times H\to P(H)$ be a demiclosed multivalued semiflow on a uniformly convex Banach space $H$.
The semiflow $G$ has a global attractor if and only if the following two conditions hold
\begin{itemize}
\item[(i)] $G$ satisfies the flattening condition,
\item[(ii)] $G$ is pointwise dissipative.
\end{itemize}
\end{corollary}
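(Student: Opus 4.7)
The plan is to derive both implications directly from the results already proved in this section, with essentially no new work required; the corollary is really just the packaging of Theorem \ref{thm-main1} together with Theorem \ref{thm:pointd} under the extra structural assumptions of uniform convexity and demiclosedness.

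For the forward direction, suppose $G$ has a global attractor $A$. By Lemma \ref{lemma-closed} demiclosedness implies closedness, so Theorem \ref{thm-main1} applies in its ``necessity'' part: $G$ must possess a bounded absorbing set $B_{0}$ and must be $\omega$-limit compact. A bounded absorbing set trivially attracts every singleton, so (ii) follows at once. For (i) I would invoke the equivalence summarized at the end of Subsection 2.2: in a uniformly convex Banach space, $\omega$-limit compactness is equivalent to the flattening condition, so (i) holds as well.

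For the converse, assume (i) and (ii). A uniformly convex Banach space is reflexive, and by hypothesis $G$ is demiclosed, so alternative (B) of condition (iii) in Theorem \ref{thm:pointd} is satisfied. Together with (i) and (ii) this yields all the hypotheses of Theorem \ref{thm:pointd}, whose conclusion states that $G$ admits a bounded absorbing set and a compact global attractor.

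There is no genuine obstacle here; the only point that deserves a brief verification is the chain of implications ``uniformly convex $\Rightarrow$ reflexive'' so that case (B) of Theorem \ref{thm:pointd}(iii) is indeed available, and the recollection that the equivalence (flattening $\Leftrightarrow$ $\omega$-limit compactness) in uniformly convex spaces is exactly the content of the second lemma of Subsection 2.2. Once those two ingredients are cited, the argument reduces to quoting Theorem \ref{thm-main1} in one direction and Theorem \ref{thm:pointd} in the other, and no further estimates on $G$ are needed.
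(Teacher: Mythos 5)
Your proposal is correct and matches the paper's intended argument: the paper states this corollary as an immediate consequence of Theorem \ref{thm:pointd} (for sufficiency, via reflexivity of uniformly convex spaces and alternative (B)) together with the necessity part of Theorem \ref{thm-main1} and the equivalence of flattening with $\omega$-limit compactness in uniformly convex spaces. The only slip is a citation: the fact that demiclosedness implies closedness is the remark following the definitions, not Lemma \ref{lemma-closed} (which asserts that ($NW$) implies demiclosedness), but the fact itself is exactly what is needed and is stated in the paper.
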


\renewcommand{\theequation}{\arabic{section}.\arabic{equation}}
\setcounter{equation}{0}
\section{Applications}\label{sec:applications}
 The examples presented in this section contain multivalued terms in the form of a Clarke subdifferentials. The notion of a Clarke subdifferential can be
 defined for a locally Lipschitz functional $j:H\to \mathbb{R}$, where $H$ is a Banach space \cite{Clarke}. The Clarke subdifferential $\partial j$ at the point $x\in H$ is defined as  $$\partial j(x)=\{\xi\in H^* \ |\  \langle \xi,x\rangle_{H^*\times H}\leq j^0(x;v)\},$$ where $j^0(x;v)$ is a generalized Clarke directional derivative
 given by $$j^0(x;v)=\limsup_{\lambda\to 0^+, y\to x}\frac{j(y+\lambda v)-j(y)}{\lambda}.$$ If $H$ is finite dimensional, then $\partial j$ has a relatively simple characterization, namely
 $$
 \partial j(x)=\overline{conv} \{\lim_{i\to\infty}\nabla j(x_i)\ |\ x_i\to x, x_i\notin S\},
 $$
 where $S$ is the set of measure zero on which $j$ fails to be differentiable \cite{Clarke}. Differential inclusions in which the multivalued terms have a form of
 Clarke subdifferentials are known as hemivariational inequalities and they are used to model the multivalued semipermeability laws \cite{Miettinen1999} or multivalued  friction and normal compliance laws in mechanics \cite{Migorski2006}. For an overview of the theory of hemivariational inequalities see the monoghraphs \cite{Naniewicz1995}, \cite{carl}, \cite{Migorski2013}.

  We also recall in this place the Translated Gronwall Lemma proved in \cite{lukaszewicz-2010}.
  \begin{lemma}\label{lemma:gronwall_lukaszewicz}
  If for some $\lambda>0$ and $h>0$ and functions $h:\br^+\to \br^+$ and $y:\br^+\to \br^+$ we have
  $$
  y'(s)+\lambda y(s)\leq h(s)\,\,\mbox{for a.e.}\,\,s\geq 0,
  $$
  where $y, y', h$ are locally integrable functions, then
  $$
  y(t+2)\leq e^{-\lambda}\int_t^{t+1}y(s)\, ds + e^{-\lambda(t+2)}\int_t^{t+2}e^{\lambda s}h(s)\, ds,
  $$
  for all $t\geq 0$. If, in particular, $h$ is a constant function, then
  $$
  y(t+2)\leq e^{-\lambda}\int_t^{t+1}y(s)\, ds + \frac{h}{\lambda},
  $$
  for all $t\geq 0$.
  \end{lemma}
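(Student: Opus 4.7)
The plan is to use the standard integrating-factor trick followed by an averaging step. Multiplying the differential inequality $y'(s)+\lambda y(s)\leq h(s)$ by $e^{\lambda s}$ turns it into
$$
\bigl(e^{\lambda s}y(s)\bigr)'\leq e^{\lambda s}h(s)\quad\text{for a.e.\ }s\geq 0,
$$
which is a legitimate pointwise statement because $y$ is locally integrable and absolutely continuous on compact intervals (the distributional derivative $y'$ being locally integrable). Integrating this inequality from an arbitrary $r\in[t,t+1]$ up to $t+2$ produces the one-parameter family of pointwise estimates
$$
e^{\lambda(t+2)}y(t+2)-e^{\lambda r}y(r)\leq \int_r^{t+2}e^{\lambda s}h(s)\,ds,
$$
that is,
$$
y(t+2)\leq e^{\lambda(r-t-2)}y(r)+e^{-\lambda(t+2)}\int_r^{t+2}e^{\lambda s}h(s)\,ds.
$$

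Now I would exploit the freedom in $r$. Since $r\in[t,t+1]$ gives $r-t-2\in[-2,-1]$, one has $e^{\lambda(r-t-2)}\leq e^{-\lambda}$; and since $h\geq0$ the integral on the right is bounded by $\int_t^{t+2}e^{\lambda s}h(s)\,ds$. Hence
$$
y(t+2)\leq e^{-\lambda}y(r)+e^{-\lambda(t+2)}\int_t^{t+2}e^{\lambda s}h(s)\,ds\qquad\text{for every }r\in[t,t+1].
$$
The left-hand side is independent of $r$, so integrating this inequality in $r$ over $[t,t+1]$ (an interval of length one) yields exactly the first claim:
$$
y(t+2)\leq e^{-\lambda}\int_t^{t+1}y(s)\,ds+e^{-\lambda(t+2)}\int_t^{t+2}e^{\lambda s}h(s)\,ds.
$$

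The second statement, when $h$ is constant, is an immediate evaluation:
$$
e^{-\lambda(t+2)}\int_t^{t+2}e^{\lambda s}h\,ds=\frac{h}{\lambda}\bigl(1-e^{-2\lambda}\bigr)\leq\frac{h}{\lambda}.
$$

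There is no substantial obstacle here; the only non-automatic idea is the averaging step, namely integrating the one-parameter estimate over $r\in[t,t+1]$ to convert the pointwise $y(r)$ on the right into $\int_t^{t+1}y(s)\,ds$, which is precisely the translated form that makes the lemma useful in applications where one controls $y$ only in an $L^1$-averaged sense on unit intervals rather than pointwise.
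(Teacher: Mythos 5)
Your proof is correct. The paper itself gives no proof of this lemma (it is recalled from the cited reference \cite{lukaszewicz-2010}), but your argument — integrating factor, integration from a variable lower limit $r\in[t,t+1]$ to $t+2$, using $y\geq 0$, $h\geq 0$ and $\lambda>0$ to bound $e^{\lambda(r-t-2)}\leq e^{-\lambda}$, then averaging over $r$ — is exactly the standard uniform-Gronwall device and is what the cited source does.
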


We would like to note that the problems considered in this section satisfy the 'strong continuity properties' (the m-semiflows are strong-strong uppersemicontinuous in $L^2(\Omega)$), and thus could be treated by the method used in \cite{Kasyanov2011}, \cite{Kasyanov2012}.
However, our alternative approach by condition ($NW$) has this advantage that it does not require additional arguments than the standard a priori estimates and passing to the limit (see for example Lemma 3.7 below). Moreover, our approach allows to show the attractor existence in $L^2(\Omega)$ and $L^p(\Omega)$ for the problem in which the elliptic operator is coercive with the exponent $2$ and the multivalued term satisfies the growth condition with the exponent $p$ (see Section 3.2 below) which is a certain generalization of the corresponding results from the mentioned papers. 
The approach to compactness by flattening condition considered here is an alternative to the one by monotonicity of energy function.

 \subsection{Attractor in $L^2(\Omega)$ for heat equation with multivalued boundary conditions}

\noindent The example of this subsection is motivated by the temperature control problem considered for example in \cite{wang-yang-2010},
where the heat flow through some section of domain boundary is associated with the temperature by a multivalued feedback control law.

\noindent Note that the existence of global attractor for the problem considered in this section follows (after the technical generalization from $f\in H$ to $f\in V^*$) from the results of \cite{Kasyanov2011} and \cite{Kasyanov2012} where the approach by strong-strong upper semicontinuity and monotonicity of auxiliary energy function was used for the problems governed by more general class of pseudomonotone operators.

 \noindent  Let $\Omega\subset \br^n$ be an open and bounded set with sufficiently smooth boundary. The boundary $\partial \Omega$ is divided into two parts
 $\Gamma_D$ and $\Gamma_M$, where $m(\Gamma_D)>0$. Let $V=\{u\in H^1(\Omega)\ |\  u=0\ \mbox{on}\ \Gamma_D\}$ and $H=L^2(\Omega)$. We define an operator
 $A:V\to V^*$ as $\langle Au,v\rangle = \int_\Omega \nabla u(x)\cdot \nabla v(x)\ dx$, where $\langle\cdot,\cdot\rangle$ is duality in $V\times V^*$.  The norm in
 $V$ is denoted as $\|v\|^2=\langle Av,v\rangle$. Norms in other spaces then $V$ will be denoted by appropriate subscripts. The linear and continuous trace mapping leading from $V$ to $L^2(\Gamma_M)$ is denoted by $\gamma$, we will use the same symbol to denote the Nemytskii trace operator on the spaces of time dependent functions. The norm of trace operator will be denoted by $\|\gamma\|\equiv\|\gamma\|_{\C{L}(V;L^2(\Gamma_M))}$.
 If $u\in L^2(\Gamma_M)$ we will denote by $S^2_{\partial j(u)}$ the set of all $L^2$ selections of $\partial j(u)$ i.e. all functions $\xi\in L^2(\Gamma_M)$ such that $\xi(x) \in \partial j(u(x))$ for a.e. $x\in \Gamma_M$.

 The problem under consideration is the following

 \noindent \textbf{Problem ($\mathcal{P}_1$).} Find $v\in L^2_{loc}(\br^+;V)$ with $v'\in L^2_{loc}(\br^+;V^\star)$, $v(0)=v_0$
 such that
 \begin{eqnarray} \label{def_sol_ex_2}
    &&  \langle v'(t) + Av(t),z \rangle + (\xi(t),\gamma z)_{L^2(\Gamma_M)}
      = \langle F, z \rangle ,\label{eq:example_11}\\
    &&  \xi(t)\in S^2_{\partial j(\gamma v(t))}, \nonumber
 \end{eqnarray}
 for a.e. $t\in\br^+$ and for all $z\in V$.

 The above problem is in fact a weak form of the following initial and boundary value problem
  \begin{eqnarray} \label{def_sol_classical_2}
    &&  v'(x,t)-\Delta v(x,t) = F(x)\ \mbox{in}\ \Omega\times \br^+,\\
    &&  v(x,0) = v_0(x)\ \mbox{in}\ \Omega,\\
    &&  v(x,t) = 0 \ \mbox{on}\ \Gamma_D\times \br^+,\\
    &&  -\frac{\partial v(x,t)}{\partial \nu} \in \partial j(v(x,t)) \ \mbox{on}\ \Gamma_M\times \br^+.
 \end{eqnarray}

 The assumptions on problem data are the following
 \begin{itemize}
 \item[$H_0$:] $v_0\in H, F\in V^*$,
 \item[$H(j)$:] $j:\br\to \br$ is a locally Lipschitz function such that
 \begin{itemize}
 \item[$(i)$] $\partial j$ satisfies the growth condition $|\xi| \leq a+b|s|$ for all $s\in \br$ and $\xi\in \partial j(s)$ with $a,b>0$,
 \item[$(ii)$] $\partial j$ satisfies the dissipativity condition $\xi s \geq c - d |s|^2$ for all $s\in \br$ and $\xi\in \partial j(s)$ with $c\in \br$ and $d\in \left(0,\frac{1}{\|\gamma\|^2}\right)$,
 \end{itemize}
 \end{itemize}

 Note that assuming $H(j)(i)$ for $u\in L^2(\Gamma_M)$ the set $S^2_{\partial j(u)}$
 is in fact the set of all measurable selections of $\partial j(u)$.

Existence of solutions to Problem ($\mathcal{P}_1$) follows from the results of \cite{Migorski2004}. Note that the assumption $H(j)(ii)$
is more general than the corresponding assumption of \cite{Migorski2004} (namely $H(j)(iv)$), however, the careful examination of
the proof in \cite{Migorski2004} reveals that $H(j)(ii)$ is sufficient for the existence of solutions. See also \cite{Kalita2012submitted}
for the existence analysis under the hypothesis $H(j)(ii)$ in the general setup.

\begin{theorem}
Problem ($\mathcal{P}_1$) has a solution.
\end{theorem}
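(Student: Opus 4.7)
The plan is the standard one for hemivariational inequalities: regularize the nonsmooth potential and combine with a Galerkin method. For a mollifier $\rho_\varepsilon$, set $j_\varepsilon = j * \rho_\varepsilon \in C^1(\br)$; then $j'_\varepsilon$ inherits $H(j)(i)$ with essentially the same constants, while $H(j)(ii)$ is preserved up to a correction that vanishes with $\varepsilon$. Because the inequality $d\|\gamma\|^2 < 1$ is strict, we can fix $d' \in (d, 1/\|\gamma\|^2)$ and work uniformly in small $\varepsilon$ with dissipativity constant $d'$.

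Next I would take a basis $\{w_i\}_{i=1}^\infty \subset V$ that is orthonormal in $H$ (for instance eigenfunctions of $A$) and solve the approximate problem
\[
(v'_{\varepsilon n}, w_i)_H + \langle A v_{\varepsilon n}, w_i\rangle + (j'_\varepsilon(\gamma v_{\varepsilon n}), \gamma w_i)_{L^2(\Gamma_M)} = \langle F, w_i\rangle,\qquad v_{\varepsilon n}(0)=P_n v_0,
\]
which is a Carath\'eodory ODE system, hence locally solvable. Testing with $v_{\varepsilon n}$ itself and applying the regularized dissipativity together with the trace inequality yields
\[
\tfrac{1}{2}\tfrac{d}{dt}\|v_{\varepsilon n}\|^2_H + (1 - d'\|\gamma\|^2)\|v_{\varepsilon n}\|^2_V \leq \langle F, v_{\varepsilon n}\rangle + C,
\]
so that $v_{\varepsilon n}$ is globally defined and uniformly bounded in $L^\infty(0,T;H) \cap L^2(0,T;V)$. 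The growth $H(j)(i)$ then gives a uniform bound on $\xi_{\varepsilon n} := j'_\varepsilon(\gamma v_{\varepsilon n})$ in $L^2(0,T;L^2(\Gamma_M))$, and the equation bounds $v'_{\varepsilon n}$ in $L^2(0,T;V^*)$.

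Passage to the limit as $\varepsilon \to 0$ and $n \to \infty$ proceeds by weak/weak-$*$ compactness. The Aubin--Lions lemma gives strong convergence $v_{\varepsilon n} \to v$ in $L^2(0,T;H)$, and the compactness of the trace through a suitable intermediate Sobolev--Slobodeckij space upgrades this to strong convergence $\gamma v_{\varepsilon n} \to \gamma v$ in $L^2(0,T;L^2(\Gamma_M))$; after a further subsequence this holds pointwise almost everywhere on $\Gamma_M \times (0,T)$. The linear terms in $v'$, $Av$, and $F$ pass to the limit directly, and $\xi_{\varepsilon n} \to \xi$ weakly in $L^2(0,T;L^2(\Gamma_M))$.

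The main obstacle is verifying that $\xi(t,x) \in \partial j(\gamma v(t,x))$ almost everywhere. This is the standard closure property of the Clarke subdifferential under mollification: the pointwise a.e.\ convergence of $\gamma v_{\varepsilon n}$, the weak $L^2$ convergence of $\xi_{\varepsilon n}=j'_\varepsilon(\gamma v_{\varepsilon n})$, and the uniform growth bound $H(j)(i)$ together imply, via Mazur's lemma and the fact that $j'_\varepsilon(s_\varepsilon) \to \eta$ with $s_\varepsilon \to s$ forces $\eta \in \partial j(s)$, that $\xi$ is a selection of $\partial j(\gamma v)$ almost everywhere. This is precisely the convergence argument in \cite{Migorski2004}; as the paper notes, the only modification needed here is that the a priori bound above goes through already under the milder assumption $H(j)(ii)$, so the rest of the existence proof applies unchanged.
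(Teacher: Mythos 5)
Your outline is correct in substance, but note that the paper does not actually prove this theorem: it only cites \cite{Migorski2004} for existence and remarks that the weaker dissipativity hypothesis $H(j)(ii)$ (with $d<1/\|\gamma\|^2$) still suffices for the coercivity used there. Your regularization-plus-Galerkin route is therefore a genuinely self-contained alternative to the paper's pointer, and it is in fact the same method the authors themselves deploy later for Problem ($\mathcal{P}_2$) in Theorem \ref{thm:ex3_existence}: mollify $j$, solve the smooth approximate problems, derive uniform bounds in $L^\infty(0,T;H)\cap L^2(0,T;V)$ and on the time derivative in $L^2(0,T;V^*)$, and identify the weak $L^2$ limit of $j_\varepsilon'(\gamma v_{\varepsilon n})$ as a selection of $\partial j(\gamma v)$ using the a.e.\ convergence of the traces and the $\limsup$ characterization of $j^0$ (equivalently, the Aubin--Cellina convergence theorem \cite{aubin-cellina} together with the strong--weak upper semicontinuity of $u\mapsto S^2_{\partial j(u)}$, exactly as in Lemma \ref{lemma:nw_1}). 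Three points are worth making explicit to close the sketch, none of them gaps: (a) the mollified dissipativity constant must be shown to stay strictly below $1/\|\gamma\|^2$ uniformly for small $\varepsilon$ --- you address this, and it is precisely where the strictness of $d\|\gamma\|^2<1$ in $H(j)(ii)$ is consumed; (b) with the eigenfunction basis of $A$ the projections are uniformly bounded on $V$, so the $V^*$ bound on $v_{\varepsilon n}'$ survives the Galerkin projection --- this is why your choice of basis matters; (c) Problem ($\mathcal{P}_1$) asks for a solution on all of $\br^+$, so a final diagonal extraction over $T\to\infty$ is needed since the estimates are obtained on finite intervals. What your approach buys is a proof readable without consulting \cite{Migorski2004}; what the paper's citation buys is brevity at the cost of asking the reader to re-examine an external proof under a modified hypothesis.
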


We associate with Problem ($\mathcal{P}_1$) the multifunction $G:\br^+\times H\to P(H)$, which assigns to the time $t$ and initial condition $v_0$ the set of states
attainable from $v_0$ after time $t$. Observe that $G$ constitutes an $m$-semiflow.

\begin{lemma}\label{lemma:basic_a_priori}
If $v\in L^2_{loc}(\br^+;V)$ with $v'\in L^2_{loc}(\br^+;V^\star)$ solves Problem ($\mathcal{P}_1$), then
\begin{eqnarray}
&& \frac{d}{dt}\|v(t)\|_H^2+C_1\|v(t)\|^2  \leq C_2\ \mbox{for a.e.}\ t\in \br^+,\label{eqn:ex2_estimate1}\\
&& \|v'(t)\|_{V^*}\leq C_3+C_4\|v(t)\|\ \mbox{for a.e.}\ t\in \br^+,\label{eqn:ex2_estimate2}
\end{eqnarray}
with $C_1,C_2,C_3,C_4>0$.
\end{lemma}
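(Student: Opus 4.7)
The plan is to obtain both estimates by testing the variational equation (\ref{def_sol_ex_2}) with $z=v(t)$, using $H(j)(ii)$ to handle the boundary term and $H(j)(i)$ to bound the multivalued selection $\xi(t)$ in $L^2(\Gamma_M)$. The main obstacle is making sure that the coefficient in front of $\|v(t)\|^2$ after absorbing the boundary term remains strictly positive, which is exactly what $H(j)(ii)$ with $d<1/\|\gamma\|^2$ provides.

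\textbf{Step 1 (energy identity).} Since $v\in L^2_{loc}(\br^+;V)$ with $v'\in L^2_{loc}(\br^+;V^*)$, the standard chain rule yields $\frac{1}{2}\frac{d}{dt}\|v(t)\|_H^2=\langle v'(t),v(t)\rangle$ a.e. Taking $z=v(t)$ in (\ref{def_sol_ex_2}) I get
$$\tfrac{1}{2}\tfrac{d}{dt}\|v(t)\|_H^2+\|v(t)\|^2+(\xi(t),\gamma v(t))_{L^2(\Gamma_M)}=\langle F,v(t)\rangle.$$

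\textbf{Step 2 (first estimate).} The dissipativity condition $H(j)(ii)$ applied pointwise a.e.\ on $\Gamma_M$ and integrated gives
$$(\xi(t),\gamma v(t))_{L^2(\Gamma_M)}\geq c\,m(\Gamma_M)-d\|\gamma v(t)\|_{L^2(\Gamma_M)}^2\geq c\,m(\Gamma_M)-d\|\gamma\|^2\|v(t)\|^2.$$
On the right-hand side I apply Cauchy--Schwarz and Young: $\langle F,v(t)\rangle\leq\frac{\varepsilon}{2}\|v(t)\|^2+\frac{1}{2\varepsilon}\|F\|_{V^*}^2$. Choosing $\varepsilon=1-d\|\gamma\|^2>0$ (this is where $H(j)(ii)$ is essential) and rearranging, I obtain
$$\tfrac{d}{dt}\|v(t)\|_H^2+(1-d\|\gamma\|^2)\|v(t)\|^2\leq \tfrac{1}{\varepsilon}\|F\|_{V^*}^2-2c\,m(\Gamma_M),$$
which is (\ref{eqn:ex2_estimate1}) with $C_1=1-d\|\gamma\|^2$ and any $C_2$ dominating the right-hand side (enlarging if necessary so that $C_2>0$).

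\textbf{Step 3 (second estimate).} Reading (\ref{def_sol_ex_2}) as an equation in $V^*$, i.e., $v'(t)=F-Av(t)-\gamma^*\xi(t)$, the triangle inequality gives
$$\|v'(t)\|_{V^*}\leq\|F\|_{V^*}+\|Av(t)\|_{V^*}+\|\gamma\|\,\|\xi(t)\|_{L^2(\Gamma_M)}.$$
The definition of $A$ yields $\|Av(t)\|_{V^*}\leq\|v(t)\|$. The growth condition $H(j)(i)$, squared and integrated over $\Gamma_M$, gives
$$\|\xi(t)\|_{L^2(\Gamma_M)}^2\leq 2a^2 m(\Gamma_M)+2b^2\|\gamma\|^2\|v(t)\|^2,$$
so $\|\xi(t)\|_{L^2(\Gamma_M)}\leq a\sqrt{2m(\Gamma_M)}+b\sqrt{2}\,\|\gamma\|\,\|v(t)\|$. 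Substituting back yields (\ref{eqn:ex2_estimate2}) with $C_3=\|F\|_{V^*}+a\|\gamma\|\sqrt{2m(\Gamma_M)}$ and $C_4=1+\sqrt{2}\,b\|\gamma\|^2$. Both constants are strictly positive, which completes the proof.
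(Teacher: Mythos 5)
Your proposal is correct and follows essentially the same route as the paper: testing with $z=v(t)$, using $H(j)(ii)$ together with the trace inequality and Young's inequality with $\varepsilon=1-d\|\gamma\|^2>0$ for the first estimate, and using the growth condition $H(j)(i)$ plus the trace inequality to bound $\|v'(t)\|_{V^*}$ for the second. The only difference is that you spell out the explicit constants $C_3,C_4$, which the paper leaves implicit.
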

\begin{proof}
We take $z=v(t)$ in (\ref{def_sol_ex_2}) and obtain
$$
\frac{1}{2}\frac{d}{dt}\|v(t)\|_H^2+\|v(t)\|^2+(\xi(t),\gamma v(t))_{L^2(\Gamma_M)} \leq \langle F,v(t)\rangle,
$$
where $\xi(t)\in S^2_{\partial j(\gamma v(t))}$ for a.e. $t\in \br^+$.
From $H(j)(ii)$ we obtain with arbitrary $\varepsilon > 0$
$$
\frac{1}{2}\frac{d}{dt}\|v(t)\|_H^2+\|v(t)\|^2  \leq \frac{\varepsilon}{2}\|v(t)\|^2  + \frac{1}{2\varepsilon}\|F\|_{V^*}^2
+ d\|\gamma v(t)\|^2_{L^2(\Gamma_M)} - c m(\Gamma_m).
$$
 Taking $\varepsilon = 1 - d\|\gamma\|^2$ we obtain with a constant $C>0$
$$
\frac{1}{2}\frac{d}{dt}\|v(t)\|_H^2+\frac{1 - d\|\gamma\|^2}{2}\|v(t)\|^2  \leq C,
$$
which proves (\ref{eqn:ex2_estimate1}).

To show (\ref{eqn:ex2_estimate2}) let us observe that for $z\in V$ and a.e. $t\in \br^+$ we have with $\xi(t)\in S^2_{\partial j(\gamma v(t))}$
$$
\langle v'(t), z\rangle \leq \|A\|_{\C{L}(V;V^*)}\|v(t)\|\|z\| + \|F\|_{V^*}\|z\|+\|\xi(t)\|_{L^2(\Gamma_M)}\|\gamma\|\|z\|.
$$

Now (\ref{eqn:ex2_estimate2}) follows directly from the growth condition $H(j)(i)$ and trace inequality.
\end{proof}
\begin{lemma}\label{lemma:absorbing_1}
There exists $R_0>0$ such that for all bounded sets $B\subset H$ there exists a time $t_0>0$ such that if $v_0\in B$ then $\|v(t)\|_H\leq R_0$ for all $t\geq t_0$
and in consequence the ball $B(0,R_0)$ is absorbing in $H$.
\end{lemma}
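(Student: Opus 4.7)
The plan is to convert the differential inequality (\ref{eqn:ex2_estimate1}) from Lemma \ref{lemma:basic_a_priori} into an exponentially decaying bound on $\|v(t)\|_H^2$ plus a fixed constant, which automatically yields an absorbing ball. The key ingredient missing from (\ref{eqn:ex2_estimate1}) is a way to compare the $V$-norm and the $H$-norm; since $V=\{u\in H^1(\Omega): u=0 \text{ on } \Gamma_D\}$ with $m(\Gamma_D)>0$, the Poincar\'e inequality yields a constant $\lambda_1>0$ such that $\|v\|^2 \geq \lambda_1 \|v\|_H^2$ for all $v\in V$.

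First, I would substitute the Poincar\'e inequality into (\ref{eqn:ex2_estimate1}) to obtain
\begin{equation*}
\frac{d}{dt}\|v(t)\|_H^2 + C_1 \lambda_1 \|v(t)\|_H^2 \leq C_2 \quad \text{for a.e. } t\in\br^+.
\end{equation*}
Setting $\mu = C_1\lambda_1$, the classical Gronwall lemma applied to the function $y(t) = \|v(t)\|_H^2$ then gives
\begin{equation*}
\|v(t)\|_H^2 \leq e^{-\mu t}\|v_0\|_H^2 + \frac{C_2}{\mu} \quad \text{for all } t\geq 0.
\end{equation*}
A small subtlety is that $v$ only satisfies $v\in L^2_{loc}(\br^+;V)$ with $v'\in L^2_{loc}(\br^+;V^*)$, but this regularity suffices to justify the identity $\frac{d}{dt}\|v(t)\|_H^2 = 2\langle v'(t),v(t)\rangle$ in the distributional sense, and therefore to integrate the resulting ODE inequality.

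Now I define $R_0 = \sqrt{1 + C_2/\mu}$. For any bounded set $B \subset H$ with $\sup_{v_0\in B} \|v_0\|_H \leq M_B$, choose
\begin{equation*}
t_0 = t_0(B) = \frac{1}{\mu}\log(M_B^2 + 1).
\end{equation*}
Then for every $v_0\in B$ and every $t\geq t_0$,
\begin{equation*}
\|v(t)\|_H^2 \leq e^{-\mu t_0} M_B^2 + \frac{C_2}{\mu} \leq 1 + \frac{C_2}{\mu} = R_0^2,
\end{equation*}
which shows $\|v(t)\|_H \leq R_0$. Since $R_0$ is independent of $B$ while $t_0$ may depend on $B$, this is exactly the statement that $B(0,R_0)$ is a bounded absorbing set in $H$.

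No step here is a serious obstacle; the only mild technicality is justifying the chain rule $\frac{d}{dt}\|v(t)\|_H^2 = 2\langle v'(t),v(t)\rangle$ at the level of regularity $v\in L^2_{loc}(\br^+;V)$, $v'\in L^2_{loc}(\br^+;V^*)$, but this is standard and already implicitly used in the proof of Lemma \ref{lemma:basic_a_priori}. The choice $R_0^2 = 1 + C_2/\mu$ is slightly arbitrary; any constant strictly larger than $C_2/\mu$ would work.
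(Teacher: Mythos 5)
Your proposal is correct and follows essentially the same route as the paper: substitute the Poincar\'e inequality (the paper uses the first eigenvalue $\lambda_1$ of $A$) into (\ref{eqn:ex2_estimate1}), apply the Gronwall lemma to get $\|v(t)\|_H^2\leq e^{-C_1\lambda_1 t}\|v_0\|_H^2+\frac{C_2}{C_1\lambda_1}$, and take $R_0$ slightly larger than the asymptotic constant. Your version is in fact marginally more careful, since the paper's stated choice $R_0=\frac{C_2}{C_1\lambda_1}+1$ omits the square root that your $R_0=\sqrt{1+C_2/\mu}$ correctly includes.
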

\begin{proof}
From (\ref{eqn:ex2_estimate1}) we obtain
$$
\frac{1}{2}\frac{d}{dt}\|v(t)\|_H^2+C_1\lambda_1\|v(t)\|_H^2  \leq C_2,
$$
where $\lambda_1$ is the first eigenvalue of $A$ in $V$. From Gronwall Lemma we get for all $t\in \br^+$
\begin{equation}\label{eqn:pointwise_bound_ex_1}
\|v(t)\|_H^2\leq \|v_0\|_H^2e^{-C_1\lambda_1 t} + \frac{C_2}{C_1\lambda_1}.
\end{equation}
The assertion holds with $R_0=\frac{C_2}{C_1\lambda_1}+1$.
\end{proof}
\begin{lemma}\label{lemma:uniform}
If $v\in L^2_{loc}(\br^+;V)$ with $v'\in L^2_{loc}(\br^+;V^\star)$ solves Problem ($\mathcal{P}_1$), then for all  bounded sets $B\subset H$
there exists $t_0=t_0(B)$ such that if $v_0\in B$, then
\begin{eqnarray}
&&\sup_{t\geq t_0}\int_t^{t+1}\|v(t)\|^2\ dt\leq C_5,\label{eqn:ex2_estimate3}\\
&&\sup_{t\geq t_0}\int_t^{t+1}\|v'(t)\|_{V^*}^2\ dt\leq C_6.\label{eqn:ex2_estimate4}
\end{eqnarray}
with $C_5,C_6>0$.
\end{lemma}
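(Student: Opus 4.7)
The plan is to upgrade the pointwise $H$-bound from Lemma \ref{lemma:absorbing_1} into integral bounds on a window of unit length by exploiting the differential inequalities already established in Lemma \ref{lemma:basic_a_priori}. The two estimates are then essentially free once we combine the absorbing property with (\ref{eqn:ex2_estimate1}) and (\ref{eqn:ex2_estimate2}).

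First, I would fix a bounded set $B\subset H$ and choose $t_0=t_0(B)>0$ as in Lemma \ref{lemma:absorbing_1}, so that for every $v_0\in B$ the corresponding solution satisfies $\|v(t)\|_H\leq R_0$ for all $t\geq t_0$. Then I integrate (\ref{eqn:ex2_estimate1}) over $[t,t+1]$ for arbitrary $t\geq t_0$, which yields
\begin{equation*}
\|v(t+1)\|_H^2 - \|v(t)\|_H^2 + C_1\int_t^{t+1}\|v(s)\|^2\,ds \leq C_2.
\end{equation*}
Dropping the nonnegative term $\|v(t+1)\|_H^2$ and using $\|v(t)\|_H^2 \leq R_0^2$ gives
\begin{equation*}
\int_t^{t+1}\|v(s)\|^2\,ds \leq \frac{R_0^2+C_2}{C_1} =: C_5,
\end{equation*}
which is exactly (\ref{eqn:ex2_estimate3}).

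For (\ref{eqn:ex2_estimate4}), I would square (\ref{eqn:ex2_estimate2}) and use the elementary inequality $(a+b)^2\leq 2a^2+2b^2$ to get
\begin{equation*}
\|v'(s)\|_{V^*}^2 \leq 2C_3^2 + 2C_4^2\|v(s)\|^2 \quad\text{for a.e.}\ s\in\br^+.
\end{equation*}
Integrating over $[t,t+1]$ for $t\geq t_0$ and applying the bound (\ref{eqn:ex2_estimate3}) just obtained produces
\begin{equation*}
\int_t^{t+1}\|v'(s)\|_{V^*}^2\,ds \leq 2C_3^2 + 2C_4^2 C_5 =: C_6,
\end{equation*}
which is (\ref{eqn:ex2_estimate4}).

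There is no real obstacle here; the argument is a routine bootstrap from a pointwise $H$-bound to integral $V$-bounds via the energy inequality, combined with the linear growth of $\|v'\|_{V^*}$ in $\|v\|$. The only thing worth flagging is that one must use the same $t_0$ coming from the absorbing set lemma (possibly shifted by $1$ to make room for the interval $[t,t+1]$, although this is harmless), so that both the pointwise bound at the left endpoint and the integral estimate are available simultaneously.
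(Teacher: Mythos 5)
Your proposal is correct and follows essentially the same route as the paper: the authors likewise obtain (\ref{eqn:ex2_estimate3}) by integrating (\ref{eqn:ex2_estimate1}) over $[t,t+1]$ and invoking the pointwise bound (\ref{eqn:pointwise_bound_ex_1}), and then deduce (\ref{eqn:ex2_estimate4}) from (\ref{eqn:ex2_estimate3}) together with (\ref{eqn:ex2_estimate2}). You have merely written out explicitly the details that the paper leaves as ``direct consequences.''
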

\begin{proof}
Estimate (\ref{eqn:ex2_estimate3}) follows directly from the integration of (\ref{eqn:ex2_estimate1}) and application of (\ref{eqn:pointwise_bound_ex_1}).
Estimate (\ref{eqn:ex2_estimate4}) is a direct consequence of (\ref{eqn:ex2_estimate3}) and (\ref{eqn:ex2_estimate2}).
\end{proof}
Before we pass to the proof of the fact that the $m$-semiflow associated with the problem $(\C{P}_1)$ satisfies the flattening condition we recall a lemma which is a simple consequence of Proposition 1.10 from \cite{Rossi2003}.
\begin{lemma}\label{lem:uniform-integrability}
Let $H$ be a separable Banach space and $T>0$. If the set $\C{U}\subset L^2(0,T;H)$ is relatively compact then it is 2-uniformly integrable, i.e.
\begin{equation}\label{eqn:uniform}
\forall\ \lambda > 0\ \ \exists \ \delta > 0\ :\quad \forall J\subset(0,1)\ \ m(J)\leq \delta \Rightarrow \sup_{u\in \C{U}}\int_J\|u(s)\|_{H}^2\ ds \leq\lambda.
\end{equation}
\end{lemma}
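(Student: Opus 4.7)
The plan is to reduce the uniform integrability statement for the whole family $\C{U}$ to the uniform integrability of finitely many functions, using the relative compactness to pass from one to the other via an $\varepsilon$-net, and then to invoke the classical absolute continuity of the Lebesgue integral for each representative.

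More precisely, fix $\lambda>0$ and set $\eta=\sqrt{\lambda/8}$. By relative compactness of $\C{U}$ in $L^2(0,T;H)$ there is a finite $\eta$-net $\{u_1,\dots,u_N\}\subset \C{U}$, so that every $u\in\C{U}$ admits an index $i(u)\in\{1,\dots,N\}$ with $\|u-u_{i(u)}\|_{L^2(0,T;H)}<\eta$. For each fixed $i$, the nonnegative map $s\mapsto \|u_i(s)\|_H^2$ belongs to $L^1(0,T)$, and the absolute continuity of the Lebesgue integral gives some $\delta_i>0$ such that every measurable $J$ with $m(J)\le\delta_i$ satisfies $\int_J\|u_i(s)\|_H^2\,ds\le \lambda/4$. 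I would then set $\delta=\min_{1\le i\le N}\delta_i>0$, which is positive since the minimum runs over finitely many indices.

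Finally, for an arbitrary $u\in\C{U}$ and any measurable $J$ with $m(J)\le\delta$, I would bound
\begin{equation*}
\int_J\|u(s)\|_H^2\,ds\le 2\int_J\|u(s)-u_{i(u)}(s)\|_H^2\,ds+2\int_J\|u_{i(u)}(s)\|_H^2\,ds\le 2\eta^2+\frac{\lambda}{2}\le\lambda,
\end{equation*}
using the elementary inequality $(a+b)^2\le 2(a^2+b^2)$ together with the fact that the first term on the right is at most $2\|u-u_{i(u)}\|_{L^2(0,T;H)}^2<2\eta^2=\lambda/4$, and then taking the supremum over $u\in\C{U}$. There is no real analytic obstacle here; the only point requiring care is the bookkeeping of constants so that the two contributions add up to exactly $\lambda$, and the observation that $\delta$ can be chosen uniform in $\C{U}$ precisely because the $\eta$-net is finite.
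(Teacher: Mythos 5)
Your argument is correct, and complete. The constants check out: with $\eta=\sqrt{\lambda/8}$ the perturbation term contributes $2\eta^2=\lambda/4$, the net representatives contribute $2\cdot\lambda/4=\lambda/2$, and $3\lambda/4\le\lambda$. The two ingredients you use --- total boundedness of a relatively compact set in the metric space $L^2(0,T;H)$, hence a finite $\eta$-net that may be taken inside $\C{U}$, and absolute continuity of the Lebesgue integral applied to each of the finitely many functions $s\mapsto\|u_i(s)\|_H^2\in L^1(0,T)$ --- are both standard, and taking the minimum of finitely many $\delta_i$ is exactly where finiteness of the net is needed.

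The paper, by contrast, does not prove the lemma at all: it presents it as ``a simple consequence of Proposition 1.10 from Rossi--Savar\'e,'' a general result relating compactness in Bochner--Lebesgue spaces to tightness and integral equicontinuity. So your route is genuinely different in character: rather than invoking an external equivalence theorem, you give a short, self-contained, elementary proof that uses only the definition of relative compactness and the absolute continuity of the integral. What the citation buys the paper is brevity and a pointer to the sharper ``if and only if'' framework; what your argument buys is independence from that reference and transparency about exactly which hypotheses are used (separability of $H$, for instance, plays no role in your proof, and indeed is not needed for this implication). One cosmetic remark: the statement as printed quantifies over $J\subset(0,1)$ while the functions live on $(0,T)$; this is evidently a typo in the paper, and your proof correctly treats arbitrary measurable $J\subset(0,T)$.
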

\begin{lemma}\label{lemma:flattening_1}
For every bounded set $B\subset H$ and every $\varepsilon >0$ there exists $t_0$ and finite dimensional subspace $H_m\subset H$ such that,
denoting the projection onto $H_m$ by $P_m$, for every $w\in \bigcup_{t\geq t_0} G(t,B)$ we have
$$
\|(I-P_m)w\|_H\leq \varepsilon,
$$
and in consequence the semiflow $G$ satisfies the flattening condition.
\end{lemma}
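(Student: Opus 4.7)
The plan is to construct $H_m$ from the spectral decomposition of $A$. Since $A\colon V\to V^*$ is symmetric and coercive and the embedding $V\hookrightarrow H$ is compact (Rellich), $A$ admits an orthonormal basis $\{e_i\}_{i=1}^\infty$ of $H$ made of eigenvectors with eigenvalues $0<\lambda_1\leq\lambda_2\leq\ldots\to\infty$, and these eigenvectors are also orthogonal in $V$. I take $H_m=\mathrm{span}\{e_1,\ldots,e_m\}$, and let $P_m$ denote the $H$-orthogonal projection onto $H_m$, which is simultaneously a $V$-orthogonal projection. Setting $v_2(t):=(I-P_m)v(t)$, the spectral decomposition immediately yields the Poincar\'e-type inequality $\|v_2(t)\|_V^2\geq \lambda_{m+1}\|v_2(t)\|_H^2$.

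First I would take $z=v_2(t)$ as a test function in (\ref{def_sol_ex_2}). Using orthogonality of the eigenvectors we have $\langle v'(t),v_2(t)\rangle = \tfrac{1}{2}\tfrac{d}{dt}\|v_2(t)\|_H^2$ and $\langle Av(t),v_2(t)\rangle = \|v_2(t)\|_V^2$, so that
\[
\tfrac{1}{2}\tfrac{d}{dt}\|v_2(t)\|_H^2 + \|v_2(t)\|_V^2 \leq |(\xi(t),\gamma v_2(t))_{L^2(\Gamma_M)}| + |\langle F,v_2(t)\rangle|.
\]
The key ingredient is the compactness of the trace operator $\gamma\colon V\to L^2(\Gamma_M)$, which combined with the strong convergence $P_m\to I$ in $V$ implies $\rho_m:=\|\gamma\circ(I-P_m)\|_{\mathcal{L}(V;L^2(\Gamma_M))}\to 0$ as $m\to\infty$ (this follows from a standard argument: if it failed, a bounded sequence $u_m\in(I-P_m)V$ with $\|\gamma u_m\|$ bounded below would converge weakly to $0$ in $V$, contradicting compactness of $\gamma$). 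Bounding the boundary term by $\rho_m\|\xi(t)\|_{L^2(\Gamma_M)}\|v_2(t)\|_V$, applying Young's inequality, and using $H(j)(i)$ together with the trace inequality to control $\|\xi(t)\|^2_{L^2(\Gamma_M)}\leq C(1+\|v(t)\|_V^2)$, I arrive at
\[
\tfrac{d}{dt}\|v_2(t)\|_H^2 + \lambda_{m+1}\|v_2(t)\|_H^2 \leq C\rho_m^2\bigl(1+\|v(t)\|_V^2\bigr) + C\|F\|_{V^*}^2.
\]

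Finally I would apply the translated Gronwall Lemma \ref{lemma:gronwall_lukaszewicz}, using the uniform bound $\sup_{t\geq t_0}\int_t^{t+1}\|v(s)\|_V^2\,ds\leq C_5$ from Lemma \ref{lemma:uniform} together with the $H$-boundedness from Lemma \ref{lemma:absorbing_1}. The term $e^{-\lambda_{m+1}}\int_t^{t+1}\|v_2(s)\|_H^2\,ds$ decays because of the absorbing ball, while the forcing integral contributes at most $C(\rho_m^2(1+C_5)+\|F\|_{V^*}^2)/\lambda_{m+1}$ type quantities, all of which tend to zero thanks to $\rho_m\to 0$ and $\lambda_{m+1}\to\infty$. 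Choosing first $m$ large enough to make this contribution smaller than $\varepsilon^2/2$ and then $t_0$ large enough so that the exponentially decaying initial part is below $\varepsilon^2/2$ proves the flattening condition.

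The main obstacle is the boundary coupling term. A naive bound $\|(\xi,\gamma v_2)\|\lesssim \|\gamma\|\cdot\|\xi\|_{L^2(\Gamma_M)}\|v_2\|_V$ produces a forcing $\sim \|v(t)\|_V^2$ whose convolution with $e^{-\lambda_{m+1}(t-s)}$ is only bounded by $C_5/(1-e^{-\lambda_{m+1}})\to C_5$ and does \emph{not} vanish with $m$; the compactness of the trace, through the extra small factor $\rho_m$, is precisely what overcomes this difficulty.
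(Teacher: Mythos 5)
Your proof is correct, and it resolves the one genuinely delicate point of this lemma by a different mechanism than the paper. Both arguments share the skeleton: spectral decomposition of $A$, testing with $v_2(t)=(I-P_m)v(t)$, the Courant--Fischer inequality, and the Translated Gronwall Lemma \ref{lemma:gronwall_lukaszewicz}; and in both the obstacle is the boundary forcing, whose convolution with $e^{-\lambda_{m+1}(t-s)}$ does not shrink as $m\to\infty$ if one only uses the bound $\|\gamma v(s)\|_{L^2(\Gamma_M)}\le\|\gamma\|\,\|v(s)\|$. The paper keeps the full forcing $D_2\|\gamma v(s)\|^2_{L^2(\Gamma_M)}$ and defeats it by splitting $\int_t^{t+2}e^{\lambda_{m+1}s}\|\gamma v(s)\|^2_{L^2(\Gamma_M)}\,ds$ at $t+2-\delta$: the far part gains a factor $e^{-\delta\lambda_{m+1}}$, and the near part is controlled by the $2$-uniform integrability (Lemma \ref{lem:uniform-integrability}) of the relatively compact set $\gamma\,\mathcal{U}\subset L^2(0,2;L^2(\Gamma_M))$, the compactness coming from the Nemytskii trace operator. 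You instead use compactness of the static trace $\gamma:V\to L^2(\Gamma_M)$ to get $\rho_m=\|\gamma\circ(I-P_m)\|_{\mathcal{L}(V;L^2(\Gamma_M))}\to 0$; your contradiction argument for this is valid because the eigenfunctions are orthogonal in $V$ as well, so $I-P_m$ is a $V$-orthogonal projection and any weak $V$-limit of unit vectors from the tail spaces must vanish. This puts the small factor $\rho_m^2$ directly in front of the $\|v(s)\|_V^2$ forcing, so the crude bound $\int_t^{t+2}\|v(s)\|_V^2\,ds\le 2C_5$ from Lemma \ref{lemma:uniform} suffices, and the $\delta$-splitting and Lemma \ref{lem:uniform-integrability} are not needed at all. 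What each approach buys: yours is shorter and needs only the elementary fact $\rho_m\to0$; the paper's does not require the projections to interact with $V$ and transfers more directly to settings where $P_m$ is not simultaneously $V$-orthogonal (indeed the same time-splitting device reappears in the $L^p$ example of Section \ref{sec:l2p}, where no analogue of $\rho_m$ is available). One small imprecision: your displayed claim that the forcing contributes quantities of size $C(\rho_m^2(1+C_5)+\|F\|_{V^*}^2)/\lambda_{m+1}$ is wrong for the $\rho_m^2\|v(s)\|_V^2$ piece, which only yields $C\rho_m^2C_5$ with no $1/\lambda_{m+1}$ gain; your closing paragraph shows you are aware of this, and since that term still vanishes via $\rho_m\to0$ the argument stands as written.
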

\begin{proof}
Let $0<\lambda_1\leq\lambda_2\leq \ldots \leq \lambda_m\leq\ldots$ be the eigenvalues of $A$ and $\{u_m\}_{m=1}^\infty$ be the corresponding
sequence of eigenfunctions which are orthogonal in $V$ and orthonormal in $H$. We have $\lambda_m\to\infty$ as $m\to\infty$. By $P_m$ we denote the projector onto the space $H_m$ spanned by the first $m$ eigenfunctions. Since $w\in \bigcup_{t\geq t_0} G(t,B)$ then there exists a function $v$, a solution to $(\C{P}_1)$ such that $v(0)\in B$ and $v(t)=w$, where $t\geq t_0$ and $t_0$ will be determined later. We denote $v(t) = v_1(t)+v_2(t)$,
where $v_1(t)\in  H_m$ and $v_2(t)\in  H_m^\bot$. We take the duality in (\ref{eq:example_11}) with $v_2(t)$ and we obtain for a.e. $t\in \br^+$
$$
\frac{1}{2}\frac{d}{dt}\|v_2(t)\|_H^2 + \|v_2(t)\|^2 \leq \|F\|_{V^*}\|v_2(t)\| + \|\xi(t)\|_{L^2(\Gamma_M)}\|\gamma\|\|v_2(t)\|.
$$
Using Cauchy inequality with $\varepsilon$ gives, for any $\varepsilon>0$ and a.e. $t\in\br^+$
$$
\frac{1}{2}\frac{d}{dt}\|v_2(t)\|_H^2 + \|v_2(t)\|^2 \leq \frac{1}{2\varepsilon}\|F\|^2_{V^*}+\varepsilon\|v_2(t)\|^2 + \frac{1}{2\varepsilon}\|\xi(t)\|^2_{L^2(\Gamma_M)}\|\gamma\|^2.
$$
We take $\varepsilon=\frac{1}{2}$ and we have
$$
\frac{1}{2}\frac{d}{dt}\|v_2(t)\|_H^2 + \frac{1}{2}\|v_2(t)\|^2 \leq \|F\|^2_{V^*}+\|\xi(t)\|^2_{L^2(\Gamma_M)}\|\gamma\|^2,
$$
for a.e. $t\in \br^+$. Note that for $u\in H_m^\bot$ we have the Courant-Fischer formula $\|u\|^2\geq \lambda_{m+1}\|u\|_H^2$. From the growth condition $H(j)(i)$ we obtain
$$
\frac{d}{dt}\|v_2(t)\|_H^2 +\lambda_{m+1}\|v_2(t)\|_H^2 \leq D_1+D_2\|\gamma v(t)\|_{L^2(\Gamma_M)}^2,
$$
where $D_1,D_2>0$ are positive constants independent on $t$ and initial condition. From the Translated Gronwall Lemma \ref{lemma:gronwall_lukaszewicz} we get, for all $t\in \br^+$,
\begin{align}\label{eqn:relation}
&\|v_2(t+2)\|_H^2\leq \\
&\leq e^{-\lambda_{m+1}}\int_t^{t+1}\|v_2(s)\|_H^2\,ds+ \frac{D_1}{\lambda_{m+1}}+D_2e^{-\lambda_{m+1}(t+2)}\int_{t}^{t+2}e^{\lambda_{m+1}s}\|\gamma v(s)\|_{L^2(\Gamma_M)}^2\ ds.\nonumber
\end{align}
Now for any $\delta\in (0,2)$ we have
\begin{align*}
&\int_{t}^{t+2}e^{\lambda_{m+1}s}\|\gamma v(s)\|_{L^2(\Gamma_M)}^2\ ds =\\
& = \int_{t}^{t+2-\delta}e^{\lambda_{m+1}s}\|\gamma v(s)\|_{L^2(\Gamma_M)}^2\ ds + \int_{t+2-\delta}^{t+2}e^{\lambda_{m+1}s}\|\gamma v(s)\|_{L^2(\Gamma_M)}^2\ ds \leq\\
& \leq e^{\lambda_{m+1}(t+2-\delta)}\|\gamma\|^2 \int_{t}^{t+2}\|v(s)\|^2\ ds + e^{\lambda_{m+1}(t+2)}\int_{t+2-\delta}^{t+2}\|\gamma v(s)\|_{L^2(\Gamma_M)}^2\ ds.
\end{align*}
Substituting this last inequality into (\ref{eqn:relation}) we obtain from Lemmata \ref{lemma:absorbing_1} and \ref{lemma:uniform} for $t\geq t_0(B)$
\begin{equation}\label{eqn:flattening}
\|v_2(t+2)\|_H^2\leq e^{-\lambda_{m+1}}R_0^2+ \frac{D_1}{\lambda_{m+1}}+2D_2
e^{-\delta\lambda_{m+1}}\|\gamma\|^2 C_5 + D_2\int_{t+2-\delta}^{t+2}\|\gamma v(s)\|_{L^2(\Gamma_M)}^2\ ds,
\end{equation}
where $R_0$ is given by Lemma \ref{lemma:absorbing_1}. We denote $\C{W}(0,2)=\{u\in L^2(0,2;V)\  |\  u'\in L^2(0,2;V^*)\}$. By Lemma \ref{lemma:uniform} the set
$$
\C{U}=\{v(\cdot-t)|_{[0,2]}\ |\ v\ \mbox{solves}\  (\ref{def_sol_ex_2})\ \mbox{and}\ t\geq t_0 \}
$$
is bounded in $\C{W}(0,2)$ and hence (from Proposition 2.143 in \cite{carl}) the set $\gamma \C{U}$ is relatively compact in $L^2(0,2;L^2(\Gamma_M))$. From Lemma \ref{lem:uniform-integrability} it follows that this set is 2-uniformly integrable.
We take $\lambda=\frac{\varepsilon}{2D_2}$. From (\ref{eqn:uniform}) and (\ref{eqn:flattening}) it follows that there exists $\delta>0$ such that for all $t\geq t_0$
\begin{equation}\label{eqn:relation2}
\|v_2(t+2)\|_H^2\leq e^{-\lambda_{m+1}}R_0^2+ \frac{D_1}{\lambda_{m+1}}+2D_2
e^{-\delta\lambda_{m+1}}\|\gamma\|^2 C_5 + \frac{\varepsilon}{2}.
\end{equation}
Now it suffices to choose $m$ large enough such that the sum of first three terms on the right-hand side of (\ref{eqn:relation2}) is less than $\frac{\varepsilon}{2}$
and the proof is complete.
\end{proof}

\begin{lemma}\label{lemma:nw_1}
If $\{v_n\}_{n=1}^\infty$ solve Problem ($\mathcal{P}_1$) with the initial conditions $\{v_n^0\}$ such that $v_n^0\to v_0$ strongly in $H$ then
for any $t>0$ for a subsequence we have $v_n(t)\to v(t)$ weakly in $H$, where $v$ is a solution with the initial condition $v_0$, and, in consequence the semiflow $G$ satisfies the condition ($NW$).
\end{lemma}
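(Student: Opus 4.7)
The plan is to fix $T>0$, obtain uniform estimates on $[0,T]$, extract a limit $v$ that solves $(\mathcal{P}_1)$ with $v(0)=v_0$, and then identify $v(t)$ as the weak-$H$ limit of $v_n(t)$ at every $t\in(0,T]$.

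Since $\{v_n^0\}$ is bounded in $H$, the pointwise estimate (\ref{eqn:pointwise_bound_ex_1}) yields a uniform bound of $\{v_n\}$ in $L^\infty(0,T;H)$; integrating (\ref{eqn:ex2_estimate1}) gives a uniform bound in $L^2(0,T;V)$ and then (\ref{eqn:ex2_estimate2}) bounds $\{v_n'\}$ uniformly in $L^2(0,T;V^*)$. By the growth condition $H(j)(i)$, the selections $\xi_n\in S^2_{\partial j(\gamma v_n)}$ associated with $v_n$ are bounded in $L^2(0,T;L^2(\Gamma_M))$. Extracting a subsequence (not relabeled) and invoking the Aubin--Lions lemma together with the compactness of the Nemytskii trace $\gamma:\mathcal{W}(0,T)\to L^2(0,T;L^2(\Gamma_M))$ (already used in Lemma \ref{lemma:flattening_1}), we obtain $v_n\rightharpoonup v$ in $L^2(0,T;V)$, $v_n'\rightharpoonup v'$ in $L^2(0,T;V^*)$, $v_n\to v$ strongly in $L^2(0,T;H)$, $\gamma v_n\to\gamma v$ strongly in $L^2(0,T;L^2(\Gamma_M))$ and pointwise a.e.\ on $\Gamma_M\times(0,T)$, and $\xi_n\rightharpoonup\xi$ weakly in $L^2(0,T;L^2(\Gamma_M))$.

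The main obstacle is to identify the limit selection, namely $\xi(t)\in S^2_{\partial j(\gamma v(t))}$ for a.e.\ $t$. For any test $\varphi\in L^2(0,T;L^2(\Gamma_M))$ the pointwise inequality $\xi_n\varphi\leq j^0(\gamma v_n;\varphi)$ integrates to
\begin{equation*}
\int_0^T\int_{\Gamma_M}\xi_n(x,t)\varphi(x,t)\,dx\,dt\leq\int_0^T\int_{\Gamma_M}j^0(\gamma v_n(x,t);\varphi(x,t))\,dx\,dt.
\end{equation*}
Passing to the limit, using the weak $L^2$-convergence of $\xi_n$ on the left and the reverse Fatou lemma (dominated via $H(j)(i)$ and the uniform $L^2$-bound on $\gamma v_n$) combined with the upper semicontinuity of $j^0(\cdot;\varphi(x,t))$ on the right, gives the same inequality with $v_n$ replaced by $v$; localising $\varphi$ then yields $\xi(x,t)\in\partial j(\gamma v(x,t))$ a.e., cf.\ \cite{denkowski2003,Migorski2013,Clarke}. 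Testing (\ref{def_sol_ex_2}) against $z\psi(t)$ with $z\in V$ and $\psi\in C_c^\infty(0,T)$ and passing to the weak limit in every term shows that $v$ satisfies the variational identity; the embedding $\mathcal{W}(0,T)\hookrightarrow C([0,T];H)$ together with $v_n^0\to v_0$ strongly in $H$ gives $v(0)=v_0$, so $v$ solves $(\mathcal{P}_1)$ on $[0,T]$.

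Finally, at any $t\in(0,T]$, the identity
\begin{equation*}
(v_n(t),w)_H=(v_n^0,w)_H+\int_0^t\langle v_n'(s),w\rangle\,ds,\qquad w\in V,
\end{equation*}
together with the weak $L^2(0,T;V^*)$-convergence of $v_n'$ and the strong $H$-convergence of $v_n^0$, gives $(v_n(t),w)_H\to(v(t),w)_H$ for every $w\in V$. Since $\{v_n(t)\}$ is bounded in $H$ and $V$ is dense in $H$, this extends to every $w\in H$ and yields $v_n(t)\rightharpoonup v(t)$ weakly in $H$. As $T>0$ was arbitrary, the $(NW)$ condition for the $m$-semiflow $G$ associated with $(\mathcal{P}_1)$ follows.
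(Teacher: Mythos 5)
Your proof is correct and its skeleton coincides with the paper's: uniform bounds from Lemma \ref{lemma:basic_a_priori} and $H(j)(i)$, weak compactness of $v_n$, $v_n'$, $\xi_n$, compactness of the Nemytskii trace to get $\gamma v_n\to\gamma v$ strongly in $L^2(0,t;L^2(\Gamma_M))$, identification of the limit problem, and weak pointwise convergence $v_n(t)\rightharpoonup v(t)$ in $H$. The one step where you genuinely diverge is the identification of the limit selection $\xi(s)\in S^2_{\partial j(\gamma v(s))}$: the paper closes this by citing the strong--weak upper semicontinuity of $u\mapsto S^2_{\partial j(u)}$ together with the Aubin--Cellina convergence theorem (Theorem 1.4.1 in \cite{aubin-cellina}), whereas you argue directly from the integrated inequality $\int\xi_n\varphi\le\int j^0(\gamma v_n;\varphi)$, the upper semicontinuity of $j^0$ and the reverse Fatou lemma. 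Your route is more self-contained and is in fact the same device the paper itself deploys in the proof of Theorem \ref{thm:ex3_existence} for the second example; the paper's citation-based route is shorter. The only detail you gloss over is that the reverse Fatou lemma needs an integrable majorant uniform in $n$: after passing to a further subsequence along which $\gamma v_n\to\gamma v$ a.e.\ with an $L^2$ dominating function, $H(j)(i)$ gives $j^0(\gamma v_n;\varphi)\le(a+b|\gamma v_n|)|\varphi|$ dominated in $L^1$, which is standard but worth stating. Your final step, deriving $v_n(t)\rightharpoonup v(t)$ for every fixed $t$ from the identity $(v_n(t),w)_H=(v_n^0,w)_H+\int_0^t\langle v_n'(s),w\rangle\,ds$ plus density and boundedness, is a welcome justification of a claim the paper merely asserts.
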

\begin{proof}
The proof is standard since the a priori estimates of Lemma \ref{lemma:basic_a_priori} provide enough convergence to pass to the limit. Indeed, the sequence $v_n$ is bounded in $L^2(0,t;V)$ with $v_n'$ bounded in $L^2(0,t;V^*)$. Hence, for a subsequence $v_n\to v$ weakly in $L^2(0,t;V)$ with $v_n'\to v'$
weakly in $L^2(0,t;V^*)$. In consequence for all $s\in [0,t]$ we have $v_n(s)\to v(s)$ weakly in $H$, which means that $v(0)=v_0$ and $v_n(t)\to v(t)$ weakly in $H$. We must show
that $v$ solves Problem $(\C{P}_1)$ on $(0,t)$. We only discuss passing to the limit in multivalued term since for other terms it is standard. Let $\xi_n\in L^2(0,t;L^2(\Gamma_M))$ be such that $\xi_n(t)\in S^2_{\partial j(\gamma v_n(t))}$ and (\ref{eq:example_11}) holds. From the growth condition $H(j)(i)$ it follows that $\xi_n$ is bounded in $L^2(0,t;L^2(\Gamma_M))$ and thus, for a subsequence we have $\xi_n\to \xi$ weakly in $L^2(0,t;L^2(\Gamma_M))$. To conclude the proof we need to show that $\xi(s)\in S^2_{\partial j(\gamma v(s))}$ for a.e. $s\in (0,t)$. From the compactness of the Nemytskii trace operator (see \cite{carl}) it follows that $\gamma v_n\to \gamma v$ strongly in $L^2(0,t;L^2(\Gamma_M))$ and, in consequence, for a subsequence, $\gamma v_n(t)\to \gamma v(t)$ strongly in $L^2(\Gamma_M)$ for a.e. $t\in (0,T)$. Now let us observe that the multivalued mapping $L^2(\Gamma_M)\ni u\to S^2_{\partial j(u)}\in P(L^2(\Gamma_M))$ is strong - weak upper semicontinuous (see \cite{naniewicz-2004}, \cite{pk-gl}). The assertion follows from the convergence theorem of Aubin and Cellina (see Theorem 1.4.1 in \cite{aubin-cellina}) and the proof is complete.
\end{proof}

From Lemmata \ref{lemma:absorbing_1}, \ref{lemma:flattening_1} and \ref{lemma:nw_1} it follows that all assumptions of Corollary \ref{corollary:main} hold and hence we have shown the following Theorem

\begin{theorem}
The $m$-semiflow $G$ associated with Problem ($\mathcal{P}_1$) has a global attractor.
\end{theorem}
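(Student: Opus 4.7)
The plan is to verify that all hypotheses of Corollary \ref{corollary:main} are satisfied by the $m$-semiflow $G$ and then simply invoke it. The ambient space $H=L^2(\Omega)$ is a Hilbert space, hence uniformly convex, so the abstract corollary applies in this setting. The three structural assumptions are exactly what has been prepared in the preceding lemmas: the existence of a bounded absorbing set comes from Lemma \ref{lemma:absorbing_1}, where the ball $B(0,R_0)\subset H$ with $R_0^2=\frac{C_2}{C_1\lambda_1}+1$ is shown to absorb every bounded set; the flattening condition is Lemma \ref{lemma:flattening_1}, where the projection onto the span of the first $m$ eigenfunctions of $A$ is shown to trap the tail of the trajectories within an $\varepsilon$-ball; and condition $(NW)$ is Lemma \ref{lemma:nw_1}, which guarantees that the graph of $G(t,\cdot)$ is closed from strong to weak. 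Assembling these three inputs with the uniform convexity of $H$, Corollary \ref{corollary:main} produces the global attractor.

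Since the theorem itself is a one-line application, the genuine content lies in the preparatory lemmas, and the main obstacle has already been overcome in Lemma \ref{lemma:flattening_1}. There, one must split $v(t)=v_1(t)+v_2(t)$ along the eigenbasis of $A$, derive an energy inequality for the high-mode part $v_2$, apply the Translated Gronwall Lemma \ref{lemma:gronwall_lukaszewicz}, and then defeat the residual term $D_2\int_{t+2-\delta}^{t+2}\|\gamma v(s)\|_{L^2(\Gamma_M)}^2\,ds$ by combining the compactness of the Nemytskii trace operator on $\C{W}(0,2)$ with the $2$-uniform integrability criterion of Lemma \ref{lem:uniform-integrability}; the spectral decay $\lambda_{m+1}\to\infty$ handles the remaining three terms. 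Once this is in hand, condition $(NW)$ in Lemma \ref{lemma:nw_1} is comparatively routine, requiring only standard weak compactness of $v_n$ in $L^2(0,t;V)$ with $v_n'$ in $L^2(0,t;V^*)$, compactness of the trace to identify the limit of $\xi_n$, and strong-weak upper semicontinuity of the selection map $u\mapsto S^2_{\partial j(u)}$ via the Aubin--Cellina convergence theorem.

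Given these three ingredients already in place, my proposed proof of the theorem is simply: by Lemmas \ref{lemma:absorbing_1}, \ref{lemma:flattening_1} and \ref{lemma:nw_1}, the $m$-semiflow $G$ on the Hilbert (hence uniformly convex) space $H=L^2(\Omega)$ admits a bounded absorbing set, satisfies the flattening condition and satisfies condition $(NW)$; therefore Corollary \ref{corollary:main} yields the existence of a global attractor for $G$.
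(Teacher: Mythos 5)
Your proof is correct and is exactly the paper's own argument: the paper likewise deduces the theorem by combining Lemmata \ref{lemma:absorbing_1}, \ref{lemma:flattening_1} and \ref{lemma:nw_1} with Corollary \ref{corollary:main} on the uniformly convex space $H=L^2(\Omega)$. Your additional commentary on the content of the preparatory lemmas accurately reflects where the real work lies, but the proof of the theorem itself matches the paper's one-line application.
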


\subsection{Attractor in $L^2(\Omega)$ and $L^p(\Omega)$ for reaction-diffusion problem with multivalued semilinear source term.}\label{sec:l2p}
\noindent The example of this subsection is a generalization of semilinear reaction-diffusion problem considered for example in Section 11 of \cite{robinson-2001-infty} to the case where the source term is given by a multifunction.

\noindent Note that the existence of attractor in $L^2(\Omega)$ for the problem considered in this section can be shown using the method of strong-strong upper semicontinuity of $m$-semiflow and the monotonicity of auxiliary energy function after technical modification of the proofs in \cite{Kasyanov2011}, \cite{Kasyanov2012}. The proof of the attractor existence in $L^p(\Omega)$ is based on the argument from \cite{lukaszewicz-2010} where the $L^p$ pullback attractor existence for the process associated with nonautonomous reaction-diffusion equation is shown. 

\noindent Let $\Omega\subset \br^m$ be an open and bounded set with Lipschitz boundary and let $p>2$. The letter $q$ will denote the conjugate exponent given by $\frac{1}{p}+\frac{1}{q}=1$. We use the notation $V=H^1_0(\Omega)$ and $H=L^2(\Omega)$. The spaces $V\subset H\subset V^*$ constitute an evolution triple with continuous, dense and compact embeddings, by $\langle\cdot,\cdot\rangle$ we denote the duality in $V\times V^*$. We define an operator $A:V\to V^*$ as $\langle Au,v\rangle = \int_\Omega \nabla u(x)\cdot \nabla v(x)\ dx$, the same symbol $A$ is used to denote the Nemytskii operator leading from $L^2(S;V)$ to $L^2(S;V^*)$, where $S\subset \br^+$ is a bounded and open time interval. We use the norm in $V$ given as $\|v\|^2=\langle Av,v\rangle$. By $\lambda_1>0$ we denote the constant in Poincare inequality $\lambda_1\|v\|_H^2\leq \|v\|^2$ valid for $v\in V$. Let moreover $\C{V}(S)=L^2(S;V)\cap L^p(S;L^p(\Omega))$. This space, equipped with the norm $\|u\|_{\C{V}(S)}=\|u\|_{L^2(S;V)}+\|u\|_{L^p(S;L^p(\Omega))}$ is in duality with the space $\C{V}^*(S)=L^2(S;V^*)+L^q(S;L^q(\Omega))$, where the norm in the latter space given by
$$\|u\|_{\C{V}^*(S)} = \inf_{\substack{w\in L^2(S;V^*),\\ z\in L^q(S;L^q(\Omega)),\\ w+z=u}}\max\{\|w\|_{L^2(S;V^*)},\|z\|_{L^q(S;L^q(\Omega))}\}.$$
 Moreover, the space $\C{W}(S)=\{u\in \C{V}(S)\ | \ u'\in \C{V}^*(S)\}$ is embedded continuously in
$C(\bar{S};H)$ and compactly in $L^2(S;H)$ (see \cite{robinson-2001-infty}). We will use the notation $\C{W}_{loc}(\br^+)$ (respectively $\C{V}_{loc}(\br^+)$, $\C{V}_{loc}^*(\br^+)$) for the space of functions which belong to $\C{W}(0,T)$ (respectively $\C{V}(0,T)$, $\C{V}^*(0,T)$) for all $T>0$.

We formulate the following problem

\noindent \textbf{Problem ($\mathcal{P}_2$).} Find $v\in \C{W}_{loc}(\br^+)$ with $v(0)=v_0$
 such that
 \begin{eqnarray} \label{def_sol_ex_3}
    &&  v'+Av+\xi=F\ \ \mbox{in}\ \ \C{V}^*_{loc}(\br^+),\label{eq:example_33}\\
    &&  \xi(t)\in S^q_{\partial j(v(t))}\ \ \mbox{a.e.}\ \ t\in\br^+. \nonumber
 \end{eqnarray}

  The assumptions on problem data are the following
 \begin{itemize}
 \item[$H_0$:] $v_0\in H, F\in H$,
 \item[$H(j)$:] $j:\br\to \br$ is a locally Lipschitz function such that
 \begin{itemize}
 \item[$(i)$] $\partial j$ satisfies the growth condition $|\xi| \leq a+b|s|^{p-1}$ for all $s\in \br$ and $\xi\in \partial j(s)$ with $a,b>0$,
 \item[$(ii)$] $\partial j$ satisfies the dissipativity condition $\xi s \geq c + d |s|^p$ for all $s\in \br$ and $\xi\in \partial j(s)$ with $c\leq 0$ and $d>0$.
 \end{itemize}
 \end{itemize}

Problem ($\mathcal{P}_2$) is in fact the weak formulation of the following initial and boundary value problem
   \begin{eqnarray} \label{def_sol_classical_3}
    &&  v'(x,t)-\Delta v(x,t) + \xi(x,t) = F(x)\ \mbox{in}\ \Omega\times \br^+,\\
    &&  v(x,0) = v_0(x)\ \mbox{in}\ \Omega,\\
    &&  v(x,t) = 0 \ \mbox{on}\ \partial\Omega \times \br^+,\\
    &&  \xi(x,t) \in \partial j(v(x,t)) \ \mbox{on}\ \Omega\times \br^+.
 \end{eqnarray}

 Existence result for Problem ($\mathcal{P}_2$) uses the standard approximation technique (see \cite{Miettinen1999}, \cite{Migorski-Ochal2007}, \cite{Kalita2010}), however we briefly outline the proof since this is a new result of independent interest in the theory of hemivariational inequalities.

 \begin{theorem}\label{thm:ex3_existence}
 Problem ($\mathcal{P}_2$) has at least one solution.
 \end{theorem}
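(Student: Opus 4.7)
The plan is a two-parameter Galerkin--regularization scheme. Fix $T>0$ and let $\{u_k\}_{k=1}^\infty$ be the eigenbasis of $A$ with homogeneous Dirichlet boundary conditions; these functions lie in $V\cap L^\infty(\Omega)\subset V\cap L^p(\Omega)$. Let $P_n$ be the orthogonal projector of $H$ onto $V_n=\mathrm{span}\{u_1,\dots,u_n\}$. To sidestep the multivaluedness, regularize $j$ by mollification $j_\varepsilon=j\ast\rho_\varepsilon\in C^1(\br)$, so that $\nabla j_\varepsilon$ inherits the growth $H(j)(i)$ with constants independent of $\varepsilon$ and satisfies $H(j)(ii)$ up to an $O(\varepsilon)$ perturbation of $c,d$. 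The finite--dimensional Cauchy problem
\begin{equation*}
(v_{n,\varepsilon}'(t),u_k)_H+\langle Av_{n,\varepsilon}(t),u_k\rangle+\int_\Omega \nabla j_\varepsilon(v_{n,\varepsilon}(t))\,u_k\,dx=(F,u_k)_H,\quad k=1,\dots,n,
\end{equation*}
with $v_{n,\varepsilon}(0)=P_n v_0$, is a Carath\'eodory ODE with continuous right-hand side, hence admits a local solution that, by the a priori bound below, extends to $[0,T]$.

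Next I would test the approximate equation with $v_{n,\varepsilon}(t)$. Using the mollified dissipativity and Young's inequality yields
\begin{equation*}
\tfrac{1}{2}\tfrac{d}{dt}\|v_{n,\varepsilon}\|_H^2+\|v_{n,\varepsilon}\|^2+d_1\|v_{n,\varepsilon}\|_{L^p(\Omega)}^p\leq C_0,
\end{equation*}
with $d_1,C_0>0$ independent of $n,\varepsilon,t$. This gives uniform bounds of $v_{n,\varepsilon}$ in $L^\infty(0,T;H)\cap L^2(0,T;V)\cap L^p(0,T;L^p(\Omega))$; the growth condition then bounds $\xi_{n,\varepsilon}:=\nabla j_\varepsilon(v_{n,\varepsilon})$ in $L^q(0,T;L^q(\Omega))$; and the equation itself bounds $v_{n,\varepsilon}'$ in $\C{V}^*(0,T)$. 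None of these bounds depend on $T$.

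Using the compactness of the embedding $\C{W}(0,T)\hookrightarrow L^2(0,T;H)$, I would then extract a diagonal subsequence (say $\varepsilon\to 0$ followed by $n\to\infty$) along which $v_{n,\varepsilon}\to v$ weakly in $\C{V}(0,T)$, strongly in $L^2(0,T;H)$ and pointwise a.e.\ on $\Omega\times(0,T)$; $v_{n,\varepsilon}'\to v'$ weakly in $\C{V}^*(0,T)$; and $\xi_{n,\varepsilon}\to\xi$ weakly in $L^q(0,T;L^q(\Omega))$. The linear terms pass to the limit by weak continuity, the continuous embedding $\C{W}(0,T)\hookrightarrow C([0,T];H)$ recovers $v(0)=v_0$, and because the bounds are $T$--independent a Cantor diagonal extraction promotes the solution from an arbitrary $[0,T]$ to all of $\br^+$.

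The main obstacle is identifying the weak limit $\xi$ as an element of $S^q_{\partial j(v)}$. I would argue as in the proof of Lemma \ref{lemma:nw_1}: the Nemytskii multifunction $L^p(\Omega)\ni u\mapsto S^q_{\partial j(u)}\subset L^q(\Omega)$ is strong-to-weak upper semicontinuous with nonempty, closed, convex and weakly compact values (see \cite{naniewicz-2004}, \cite{pk-gl}), so the a.e.\ convergence $v_{n,\varepsilon}\to v$ together with the convergence theorem of Aubin and Cellina (Theorem 1.4.1 in \cite{aubin-cellina}) forces the inclusion $\xi(t,x)\in\partial j(v(t,x))$ for a.e.\ $(t,x)\in(0,T)\times\Omega$. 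This identifies $\xi$ as an admissible selection and completes the verification that $v$ solves Problem $(\C{P}_2)$.
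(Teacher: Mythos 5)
Your overall architecture (mollify $j$, derive $\varepsilon$-uniform a priori bounds from the inherited growth and dissipativity, extract weakly convergent subsequences, pass to the limit, extend to $\br^+$ by a diagonal argument) is the same as the paper's; the paper skips the Galerkin layer by simply citing existence for the smooth regularized problems, but that is a cosmetic difference. The substantive issue is in your final step, which is precisely where the paper spends most of its effort.

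The gap is this: your approximate selections are $\xi_{n,\varepsilon}=\nabla j_\varepsilon(v_{n,\varepsilon})$, which are selections of $\partial j_\varepsilon(v_{n,\varepsilon})$, \emph{not} of $\partial j(v_{n,\varepsilon})$. The strong-to-weak upper semicontinuity of $u\mapsto S^q_{\partial j(u)}$ and the Aubin--Cellina convergence theorem apply to sequences $(v_k,\xi_k)$ whose pairs lie (asymptotically) in the graph of the \emph{fixed} multifunction $\partial j$; as stated, your $\xi_{n,\varepsilon}$ do not satisfy this hypothesis, so the conclusion $\xi\in S^q_{\partial j(v)}$ does not follow from the tools you invoke. (This is exactly why Lemma \ref{lemma:nw_1} can use Aubin--Cellina: there the approximating solutions genuinely carry selections of $\partial j$ itself.) To close the gap you must either (a) prove a graph-convergence statement for the mollified gradients, e.g. that $\nabla j_\varepsilon(s)$ is a convex combination of values of $\partial j$ on $[s-\varepsilon,s+\varepsilon]$ and hence lies in any prescribed neighbourhood of $\mathrm{graph}\,\partial j$ for $\varepsilon$ small, uniformly on compacta, and then apply the convergence theorem in that strengthened form; or (b) do what the paper does: fix $w\in L^\infty$, use Egoroff to get uniform convergence of $v_{n,\varepsilon}$ off a set of small measure, apply Fatou's lemma to $\int \xi w$, and estimate $\limsup \lambda^{-1}\bigl(j_\varepsilon(v_{n,\varepsilon}+\lambda w)-j_\varepsilon(v_{n,\varepsilon})\bigr)$ by $j^0(v;w)$ directly from the convolution formula and the upper semicontinuity of the difference quotients. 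Either route works, but one of them has to be carried out explicitly; as written, the identification of $\xi$ is asserted rather than proved.
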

 \begin{proof}
 Let $\varrho\in C^\infty_0(\br)$ be a standard mollifier kernel such that $\mbox{supp}(\varrho)\subset (-1,1)$, $\int_{-1}^1\varrho(s)\ ds = 1$ and $\rho(s)\geq 0$ for all $s\in (-1,1)$. We define $\varrho_n(s)=n\varrho(ns)$ for $s\in \br$ and $n\in\mathbb{N}$. We consider $j_n:\br\to \br$ defined by a convolution
 $$
 j_n(r)=\int_\br \varrho_n(s)j(r-s)\ ds.
 $$
 Note that $j_n\in C^\infty(\br)$. Moreover by calculations analogous to the proofs of Lemmata 5 and 9 in \cite{Kalita2010} it follows that $j_n$ satisfy $H(j)$ (where the classical derivative $j_n'$ is taken in place of $\partial j$) with the constants $a,b,d>0$ and $c\in\br$ different then those for $j$ but independent on $n$. Next, we define a family of auxiliary problems parameterized by $n$

\noindent \textbf{Problem ($\mathcal{P}_2^n$).}
 \textit{Find $v_n\in \C{W}_{loc}(\br^+)$ with $v_n(0)=v_0$
 such that}
 \begin{equation}
    v_n'+Av_n+j_n'(v_n)=F\ \ \mbox{in}\ \ \C{V}^*_{loc}(\br^+).\label{def_sol_ex_aux_3}
 \end{equation}

 It is known (see \cite{robinson-2001-infty}, \cite{Zhong-Yang-Sun}, \cite{lukaszewicz-2010}) that above problems have, possibly nonunique, solutions: we denote the  sequence of corresponding solutions by $\{v_n\}_{n=1}^\infty$. Taking the duality in (\ref{def_sol_ex_aux_3}) with $v_n$ and using $H(j)(ii)$ we obtain, after calculations, that $\{v_n\}$ is bounded in $\C{V}(0,T)$ and in $L^\infty(0,T;H)$, where the bound can possibly tend to infinity as $T\to\infty$. Moreover a standard calculation that uses $H(j)(i)$ shows that $\{v_n'\}$ is bounded in $\C{V}^*(0,T)$. Hence, for a subsequence constructed by the diagonal argument, we have
 \begin{eqnarray}
 && v_n\to v\ \ \mbox{weakly in}\ \ \C{W}_{loc}(\br^+)\\
 && v_n\to v\ \ \mbox{weakly-* in}\ \ L^\infty_{loc}(\br^+).
 \end{eqnarray}
 From the fact that the Nemytskii operator $A:L^2(0,T;V)\to L^2(0,T;V^*)$ is weakly continuous for all $t\geq 0$ we obtain $Av_n\to Av$ weakly in $\C{V}^*_{loc}(\br^+)$. Application of the growth condition $H(j)(i)$ gives that
 $j_n'(v_n)$ is bounded in $L^q_{loc}(\br^+;L^q(\Omega))$ and hence, for a subsequence
 \begin{equation}\label{eqn:weak_moll}
 j_n'(v_n)\to \xi\ \ \mbox{weakly in}\ \ L^q_{loc}(\br^+;L^q(\Omega)).
 \end{equation}
 We can pass to the limit in (\ref{def_sol_ex_aux_3}) and obtain that
 $$
 v'+A(v)+\xi=F\ \ \mbox{in}\ \ \C{V}^*_{loc}(\br^+).
 $$
 In a standard way it follows that $v(0)=v_0$. It remains to show that $\xi(t)\in \partial j(v(t))$ for a.e. $t\in \br^+$. The proof of this fact follows the lines   of the proof of Step III of Theorem 1 in \cite{Miettinen1999}. Let us fix $T>0$ and denote $\Omega_T=(0,T)\times \Omega$. Since $\C{W}(0,T)$ embeds in $L^2(0,T;H)$ compactly it follows that, for a subsequence, $v_n(t,x)\to v(t,x)$ for a.e. $(t,x)\in \Omega_T$.
 By Egoroff's Theorem for each $\delta>0$ we can find the Lebesgue measurable set $N_\delta$ such that $m(N_\delta)<\delta$ and $v_n\to v$ uniformly on $\Omega_T\setminus N_\delta$. Hence for each $\varepsilon>0$ we can find $n_0\in \mathbb{N}$ such that for all $(t,x)\in \Omega_T\setminus N_\delta$ and $n\geq n_0$ we have 
 \begin{equation}\label{eq:bnd}
 |v_n(x,t)|\leq |v(x,t)| + \varepsilon \in L^p(\Omega_T\setminus N_\delta).
 \end{equation} 
  Morover from (\ref{eqn:weak_moll}) it follows that $j_n'(v_n)\to \xi$ weakly in $L^1(\Omega_T\setminus N_\delta)$. Now take $w\in L^\infty(\Omega_T\setminus N_\delta)$. We have
 $$
 \int_{\Omega_T\setminus N_\delta}\xi(t,x)w(t,x)\ dx\,dt=\lim_{n\to\infty}\int_{\Omega_T\setminus N_\delta}j_n'(v_n(t,x))w(t,x)\ dx\,dt.
 $$
 From $H(j)(i)$, which is valid for all $n$ with the same $a,b$ and from the bound (\ref{eq:bnd}) we can invoke the Fatou Lemma and obtain
 \begin{equation}
 \int\limits_{\Omega_T\setminus N_\delta}\xi(t,x)w(t,x)\ dx\,dt\leq \int_{\Omega_T\setminus N_\delta}\limsup_{\substack{n\to\infty\\ \lambda\to 0^+}}\frac{j_n(v_n(t,x)+\lambda w(t,x))-j_n(v_n(t,x))}{\lambda}\ dx\,dt.
 \end{equation}
 We can estimate the last integrand for a.e. $(t,x)\in \Omega_T\setminus N_\delta$ as follows
 \begin{eqnarray}
  &&\limsup_{\substack{n\to\infty\\ \lambda\to 0^+}}\frac{j_n(v_n(t,x)+\lambda w(t,x))-j_n(v_n(t,x))}{\lambda} =\nonumber\\ && = \limsup_{\substack{n\to\infty\\ \lambda\to 0^+}}\int\limits_{\br}\varrho_n(\tau)\frac{j(v_n(t,x)-\tau+\lambda w(t,x))-j(v_n(t,x)-\tau)}{\lambda}\ d\tau \leq\nonumber\\
&& \leq \limsup_{\substack{n\to\infty\\ \lambda\to 0^+\\ \tau\to 0}}\frac{j(v_n(t,x)-\tau+\lambda w(t,x))-j(v_n(t,x)-\tau)}{\lambda}= \nonumber\\
&& = \limsup_{\substack{n\to\infty\\ \lambda\to 0^+\\ \tau\to 0}}\frac{j(v(t,x)+v_n(t,x)-v(t,x)-\tau+\lambda w(t,x))-j(v(t,x)+v_n(t,x)-v(t,x)-\tau)}{\lambda}=\nonumber\\
&&=j^0(v(t,x);w(t,x)).\nonumber
 \end{eqnarray}
 Hence
 \begin{equation}
 \int\limits_{\Omega_T\setminus N_\delta} \xi(t,x)w(t,x)\ dx\, dt\leq \int\limits_{\Omega_T\setminus N_\delta}j^0(v(t,x);w(t,x))\ \,dx\,dt
 \end{equation}
 Since the choice of $w$ is arbitrary, from the definition of the generalized gradient we get
 $$
 \xi(t,x)\in \partial j(v(t,x))\ \ \mbox{a.e.}\ \ (t,x)\in\Omega_T\setminus N_\delta,
 $$
 but, since $\delta>0$ can be arbitrarily small and $m(N_\delta)<0$ the last inclusion must hold for a.e. $x\in \Omega_T$ and the proof is complete.
 \end{proof}

 Note, that by taking the test function $u=w\theta(t)$, where $\theta|_{[0,T]}\in C^1([0,T])$ with $\theta(t)=0$ for $t\geq T$ and $w\in V\cap L^p(\Omega)$, analogously
 to the argument in Section 23 of \cite{Zeidler_2A} or Section 8 of \cite{robinson-2001-infty}, it follows, that if $v\in \mathcal{W}_{loc}(\mathbb{R}^+)$ solves (\ref{def_sol_ex_3}), then for a.e. $t\in \mathbb{R^+}$ we have
 \begin{equation}\label{ex_3_a_e}
 v'(t)+Av(t)+\xi(t)=F,
 \end{equation}
 where the equality is understood in $V^*+L^q(\Omega)$.

 We prove some estimates that are satisfied by the solutions to Problem ($\mathcal{P}_2$).

 \begin{lemma}
 If $v\in \mathcal{W}(\mathbb{R}^+)$ solves Problem ($\mathcal{P}_2$), then, for a.e. $t\geq 0$ we have
 \begin{eqnarray}
 && \frac{d}{dt}\|v(t)\|_H^2+\|v(t)\|_V^2 + 2d\|v(t)\|_{L^p(\Omega)}^p\leq \frac{\|F\|^2_H}{\lambda_1} -2cm(\Omega), \label{first_inequality}\\
 &&  \frac{d}{dt}\|(|v(t)|-M)_+\|_{L^p(\Omega)}^p+\frac{dp}{8}\|(|v(t)|-M)_+\|_{L^{2p-2}(\Omega)}^{2p-2}+\nonumber\\
 &&+\frac{dp}{4}M^{p-2}\|(|v(t)|-M)_+\|_{L^p(\Omega)}^p\leq\frac{p}{d}\|F\|_H^2, \label{second_inequality}
 \end{eqnarray}
 where (\ref{second_inequality}) holds with any $M\geq \left(-\frac{2c}{d}\right)^{\frac{1}{p}}$.
 \end{lemma}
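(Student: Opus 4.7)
The plan is to derive both inequalities by testing the pointwise identity (3.9) with well-chosen functions and exploiting the dissipativity hypothesis $H(j)(ii)$.

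For the first inequality, I would simply pair (3.9) with $v(t)\in V\cap L^p(\Omega)$. This yields
$$\tfrac{1}{2}\tfrac{d}{dt}\|v(t)\|_H^2 + \|v(t)\|_V^2 + \int_\Omega \xi(t,x)v(t,x)\,dx = (F,v(t))_H$$
for a.e.\ $t$. The dissipativity $\xi s\geq c+d|s|^p$ gives the pointwise bound $\int_\Omega \xi v\,dx\geq cm(\Omega)+d\|v\|_{L^p(\Omega)}^p$, while Cauchy--Schwarz together with the Young inequality and the Poincar\'e inequality $\lambda_1\|v\|_H^2\leq\|v\|_V^2$ bound the right-hand side by $\tfrac{1}{2\lambda_1}\|F\|_H^2+\tfrac{1}{2}\|v\|_V^2$. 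Rearranging and multiplying by $2$ gives (3.10).

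For the second inequality, I would test (3.9) with the truncated function $\phi(v(t))$, where $\phi(s)=(|s|-M)_+^{p-1}\mathrm{sgn}(s)$. Because $\phi(v)$ is not \emph{a priori} in $V$ when $p>2$, the rigorous step is to first replace $\phi$ by a sequence of bounded globally Lipschitz approximations $\phi_k$ (for instance, truncate at level $k$), derive the estimate for each $\phi_k$, and pass to the limit using monotone convergence; this is the only nontrivial technical obstacle. Under this pairing, since $\Phi(s):=\int_0^s\phi(\sigma)\,d\sigma=\tfrac{1}{p}(|s|-M)_+^p$, the duality of $v'$ with $\phi(v)$ yields $\tfrac{1}{p}\tfrac{d}{dt}\|(|v|-M)_+\|_{L^p(\Omega)}^p$, and the elliptic term $\langle Av,\phi(v)\rangle=(p-1)\int_\Omega(|v|-M)_+^{p-2}|\nabla v|^2\,dx\geq 0$ is simply dropped.

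The heart of the argument is the lower bound on the reaction term $\int_\Omega \xi\,\phi(v)\,dx$. From $\xi v\geq c+d|v|^p$, dividing by $|v|$ on the set $\{|v|\geq M\}$ gives $\xi\,\mathrm{sgn}(v)\geq c/|v|+d|v|^{p-1}$. The assumption $M\geq(-2c/d)^{1/p}$ was designed precisely so that on $\{|v|\geq M\}$ one has $c/|v|+\tfrac{d}{2}|v|^{p-1}\geq 0$, so $\xi\,\mathrm{sgn}(v)\geq \tfrac{d}{2}|v|^{p-1}$. Multiplying by $(|v|-M)_+^{p-1}$ and using the two elementary bounds $|v|^{p-1}\geq(|v|-M)_+^{p-1}$ and $|v|^{p-1}\geq M^{p-2}|v|\geq M^{p-2}(|v|-M)_+$ valid on $\{|v|\geq M\}$, I average them to obtain
$$\xi(t,x)\phi(v(t,x))\geq \tfrac{d}{4}(|v|-M)_+^{2p-2}+\tfrac{d}{4}M^{p-2}(|v|-M)_+^{p}.$$

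For the forcing, $\int_\Omega F\phi(v)\,dx\leq\int_\Omega|F|(|v|-M)_+^{p-1}\,dx$, to which I apply Young with exponents $2$ and $2$, $|F|(|v|-M)_+^{p-1}\leq \tfrac{\varepsilon}{2}(|v|-M)_+^{2p-2}+\tfrac{1}{2\varepsilon}|F|^2$. Choosing $\varepsilon=d/4$ leaves $\tfrac{d}{8}$ of the $L^{2p-2}$-term on the left-hand side after collecting. Multiplying the resulting inequality by $p$ produces exactly the structure of (3.11), with the $M^{p-2}\|(|v|-M)_+\|_{L^p}^p$ term passing unchanged and the $L^{2p-2}$ coefficient equal to $dp/8$, which is the announced inequality.
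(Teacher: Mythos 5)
Your proof is correct and follows essentially the same route as the paper's: pair (\ref{ex_3_a_e}) with $v(t)$ for (\ref{first_inequality}), and with the truncation $(|v|-M)_+^{p-1}\mathrm{sgn}(v)$ (regularized so that the test function lies in $V\cap L^p(\Omega)$, then pass to the limit) for (\ref{second_inequality}), using $H(j)(ii)$ with $M\geq(-2c/d)^{1/p}$ to obtain the pointwise lower bound $\xi\,\phi(v)\geq \tfrac{d}{4}(|v|-M)_+^{2p-2}+\tfrac{d}{4}M^{p-2}(|v|-M)_+^{p}$ and Young's inequality on the forcing term. The only cosmetic difference is that you use a single signed test function where the paper tests with $(v-M)_+^{p-1}$ and $(v+M)_-^{p-1}$ separately and adds the two resulting inequalities; note also that both your computation and the paper's own intermediate estimates actually produce the harmless larger constant $\tfrac{2p}{d}\|F\|_H^2$ on the right-hand side of (\ref{second_inequality}).
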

 \begin{proof}
 Proof of (\ref{first_inequality}) is straightforward. This inequality follows
 by taking the duality in (\ref{ex_3_a_e}) with $v(t)\in V\cap L^p(\Omega)$ for a.e. $t\in \br^+$.

 In order to show (\ref{second_inequality}) we start from a formal estimation
 and then we show a technique to prove it rigorously. We proceed analogously to the proof of Lemma 3.1 in \cite{lukaszewicz-2010}. Taking the duality in (\ref{ex_3_a_e}) with $(v(t)-M)_+^{p-1}$, where $(v(t)-M)_+$ is a positive part of $v(t)-M$, and $M>0$, we obtain for a.e. $t\in \br^+$
 \begin{eqnarray}
 &&\frac{1}{p}\frac{d}{dt}\|(v(t)-M)_+\|_{L^p(\Omega)}^p+(p-1)\int_{\Omega}|\nabla(v(t)-M)_+|^2
 |(v(t)-M)_+|^{p-2}\ dx+\nonumber\\
 &&+\int_{\Omega}\xi(t)(v(t)-M)_+^{p-1}\ dx = \int_{\Omega}F(v(t)-M)_+^{p-1}\ dx.\label{estimate_aux_plus}
 \end{eqnarray}
 We observe that from $H(j)(ii)$ it follows, that for $s\geq\left(-\frac{2c}{d}\right)^{\frac{1}{p}}$ we have $\xi \geq \frac{d}{2}s^{p-1}$ for all $\xi\in\partial j(s)$ and hence if we assume that $M\geq \left(-\frac{2c}{d}\right)^{\frac{1}{p}}$ we have for a.e. $(x,t)\in\Omega\times\br^+$
 \begin{align}
 &\xi(x,t)(v(x,t)-M)_+^{p-1}\geq \frac{d}{2}|v(x,t)|^{p-1}(v(x,t)-M)_+^{p-1} \geq\nonumber\\
 & \geq \frac{d}{4}M^{p-2}(v(x,t)-M)_+^p+\frac{d}{4}(v(x,t)-M)_+^{2p-2}.\label{eq_aux_1}
 \end{align}
 Moreover we have for a.e. $t\in \br^+$ and a.e. $x\in\Omega$
 \begin{equation}\label{eq_aux_2}
 F(x)(v(x,t)-M)_+^{p-1} \leq \frac{d}{8}(v(x,t)-M)_+^{2p-2}+\frac{2}{d}F(x)^2.
 \end{equation}
 Applying (\ref{eq_aux_1}) and (\ref{eq_aux_2}) to (\ref{estimate_aux_plus}) we obtain
  \begin{equation}\label{eq_aux_3}
 \frac{d}{dt}\|(v(t)-M)_+\|_{L^p(\Omega)}^p+\frac{dp}{8}\|(v(t)-M)_+\|_{L^{2p-2}(\Omega)}^{2p-2}+\frac{dp}{4}M^{p-2}\|(v(t)-M)_+\|_{L^p(\Omega)}^p\leq\frac{2p}{d}\|F\|_H^2.
 \end{equation}
 Taking $(v(t)+M)_-^{p-1}$ as the test function in (\ref{ex_3_a_e}), where $(v(t)+M)_-$ is the negative part of $v(t)+M$ exactly as above we get
  \begin{equation}\label{eq_aux_4}
 \frac{d}{dt}\|(v(t)+M)_-\|_{L^p(\Omega)}^p+\frac{dp}{8}\|(v(t)+M)_-\|_{L^{2p-2}(\Omega)}^{2p-2}+\frac{dp}{4}M^{p-2}\|(v(t)+M)_-\|_{L^p(\Omega)}^p\leq\frac{2p}{d}\|F\|_H^2.
 \end{equation}
 Combining (\ref{eq_aux_3}) and (\ref{eq_aux_4}) we obtain (\ref{second_inequality}).

 In order for the derivation of (\ref{second_inequality}) to be rigorous we need
 that $(v(t)+M)_-^{p-1}, (v(t)-M)_+^{p-1}\in V\cap L^p(\Omega)$. If this is not the case, we proceed analogously to the argument of Section III.6.2 in \cite{temam-infty}. First we define $\theta_N\in C^\infty([0,\infty))$ by $\theta_N(s)=s$ for $s\in [0,N]$, $\theta_N(s)=2N$ for $s\geq 2N$ and $\theta_N$ strictly increasing and smooth. We take $\theta_N((v(t)-M)_+^{p-1})$ as a test function in (\ref{ex_3_a_e})
 and obtain inequality similar to (\ref{eq_aux_3}). Then we let $N\to\infty$ in this
 inequality and obtain precisely (\ref{eq_aux_3}). Same argument can be used for (\ref{eq_aux_4}).
 \end{proof}

 Note that (\ref{second_inequality}) is similar to the inequality (3.4) in Lemma 3.1 in \cite{lukaszewicz-2010}. The difference is, that in (\ref{second_inequality}) there is an additional term with $L^{2p-2}$ norm. This term will be needed for $\omega$-limit compactness of the associated multivalued semigroup, since due to lack of regularity we cannot take $-\Delta v(t)$ as a test function and obtain estimate on $\|v(t)\|_V$ analogous to (3.3) in \cite{lukaszewicz-2010}.

 We can associate with Problem ($\mathcal{P}_2$) the multifunction
 $G:\br^+\times H\to P(H)$ that assigns to the time $t$ and initial condition $v_0$ the set of states attainable from $v_0$ after time $t$. Obviously, $G$ is an $m$-semiflow.

\begin{lemma}\label{eqn:ex3_bdd_H}
Let $t\geq 0$ and $v_0\in B\subset H$, where $\|w\|_H\leq R$ for $w\in B$. Let moreover $v\in G(t,v_0)$. Then
  \begin{equation}\label{eqn:ex3_bdd_H_ineq}
  \|v\|_H^2\leq e^{-\lambda_1t}R^2+\frac{\|F\|_H^2-2cm(\Omega)\lambda_1}{\lambda_1^2}.
  \end{equation}
  In consequence $G:\br^+\times H\to P(H)$ has a bounded absorbing set.
\end{lemma}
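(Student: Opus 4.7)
The plan is to derive the pointwise bound on $\|v(t)\|_H^2$ as a direct consequence of inequality (\ref{first_inequality}), and then to read off the absorbing set from it. The only subtlety is that (\ref{first_inequality}) contains three nonnegative quantities on the left, so we must be careful to keep the one that generates exponential decay.

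First, since $d>0$, I would drop the $2d\|v(t)\|_{L^p(\Omega)}^p$ term from (\ref{first_inequality}), which only weakens the inequality. Then, applying the Poincaré inequality $\|v(t)\|_V^2\geq \lambda_1\|v(t)\|_H^2$, I obtain the scalar differential inequality
\begin{equation*}
\frac{d}{dt}\|v(t)\|_H^2+\lambda_1\|v(t)\|_H^2\leq \frac{\|F\|_H^2}{\lambda_1}-2cm(\Omega)\quad\text{for a.e. } t\geq 0.
\end{equation*}
Since $v\in\mathcal{W}_{loc}(\mathbb{R}^+)$, the function $t\mapsto \|v(t)\|_H^2$ is absolutely continuous, so this distributional inequality can be integrated in the classical sense.

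Next I would apply the standard (integrating-factor) Gronwall argument with initial value $\|v(0)\|_H^2=\|v_0\|_H^2\leq R^2$, yielding
\begin{equation*}
\|v(t)\|_H^2\leq R^2 e^{-\lambda_1 t}+\frac{\|F\|_H^2-2cm(\Omega)\lambda_1}{\lambda_1^2}\bigl(1-e^{-\lambda_1 t}\bigr).
\end{equation*}
Using the assumption $c\leq 0$ from $H(j)(ii)$, the right-hand constant is nonnegative, so I may bound the factor $1-e^{-\lambda_1 t}$ by $1$ to obtain exactly (\ref{eqn:ex3_bdd_H_ineq}).

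Finally, to produce a bounded absorbing set, I would fix any $R_0^2>\frac{\|F\|_H^2-2cm(\Omega)\lambda_1}{\lambda_1^2}$ and take $B_0=B(0,R_0)\subset H$. Given any bounded $B\subset H$ with $\sup_{w\in B}\|w\|_H\leq R$, the estimate just proved shows that choosing $t_0=t_0(B)$ large enough so that $R^2 e^{-\lambda_1 t_0}\leq R_0^2-\frac{\|F\|_H^2-2cm(\Omega)\lambda_1}{\lambda_1^2}$ gives $G(t,B)\subset B_0$ for all $t\geq t_0$. No real obstacle is expected; the only thing to verify carefully is that the pointwise-in-time bound is legitimate in view of the regularity of $v$, which follows from the embedding $\mathcal{W}(0,T)\hookrightarrow C([0,T];H)$ and the chain rule for $\|v(t)\|_H^2$ on $\mathcal{W}_{loc}(\mathbb{R}^+)$.
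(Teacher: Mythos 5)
Your proof is correct and follows essentially the same route as the paper: starting from (\ref{first_inequality}), dropping the nonnegative $L^p$ term, applying the Poincar\'e inequality, and integrating with the factor $e^{\lambda_1 t}$ to obtain (\ref{eqn:ex3_bdd_H_ineq}) and the absorbing ball. Your explicit remarks on the sign of the constant (via $c\leq 0$) and on the absolute continuity of $t\mapsto\|v(t)\|_H^2$ are details the paper leaves implicit, but the argument is the same.
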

\begin{proof}
The proof is straightforward. Multiplying (\ref{first_inequality}) by the integrating factor $e^{\lambda_1 t}$, we get for a.e. $t\geq 0$
  $$
  \frac{d}{dt}(e^{\lambda_1 t}\|v(t)\|_H^2)\leq \left(\frac{\|F\|_H^2-2cm(\Omega)}{\lambda_1}\right)e^{\lambda_1t}.
  $$
  Hence, after integration from $0$ to $t$ we get (\ref{eqn:ex3_bdd_H_ineq}).
\end{proof}

 \begin{lemma}\label{lemma:ex3_absorbing_p}
 If $v_0\in L^p(\Omega)$ then $G(t,v_0)\subset L^p(\Omega)$ and hence $G$ is an $m$-semiflow
 on $L^p(\Omega)$. Moreover $G:\br^+\times L^p(\Omega)\to P(L^p(\Omega))$ has a bounded absorbing set in $L^p(\Omega)$.
 \end{lemma}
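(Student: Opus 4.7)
The strategy is to read the pointwise $L^p$ bound on $v(t)$ directly off the differential inequality~(\ref{second_inequality}), which already encodes everything we need. Fix $M \geq \left(-\tfrac{2c}{d}\right)^{1/p}$ and, for a solution $v$ to Problem~$(\mathcal{P}_2)$, denote $y(t) = \|(|v(t)| - M)_+\|_{L^p(\Omega)}^p$. Discarding the nonnegative $L^{2p-2}$ term in~(\ref{second_inequality}) yields the scalar linear inequality
\begin{equation*}
y'(t) + \alpha\, y(t) \leq \beta, \qquad \alpha = \tfrac{dp}{4}M^{p-2}, \quad \beta = \tfrac{p}{d}\|F\|_H^2,
\end{equation*}
so Gronwall's lemma gives $y(t) \leq y(0)\, e^{-\alpha t} + \beta/\alpha$ for all $t \geq 0$. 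If $v_0 \in L^p(\Omega)$, then $y(0) \leq \|v_0\|_{L^p(\Omega)}^p < \infty$, so $y$ stays uniformly bounded on $\br^+$.

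To pass from $y(t)$ to $\|v(t)\|_{L^p(\Omega)}$, I would use the elementary pointwise estimate $|v(t,x)|^p \leq 2^{p-1}\bigl((|v(t,x)|-M)_+^p + M^p\bigr)$, whose integration over $\Omega$ gives $\|v(t)\|_{L^p(\Omega)}^p \leq 2^{p-1}\bigl(y(t) + M^p\, m(\Omega)\bigr)$. This simultaneously shows that $G$ leaves $L^p(\Omega)$ invariant and produces an absorbing set: for any bounded $B \subset L^p(\Omega)$ of radius $R$, one has $y(0) \leq R^p$, and choosing $t_0(R)$ so that $R^p e^{-\alpha t_0(R)} \leq 1$ gives $y(t) \leq 1 + \beta/\alpha$ for all $t \geq t_0(R)$; hence the closed $L^p$ ball of radius $\bigl(2^{p-1}(1 + \beta/\alpha + M^p m(\Omega))\bigr)^{1/p}$ absorbs $B$.

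The principal technical obstacle is to justify these manipulations rigorously, since a solution $v \in \C{W}_{loc}(\br^+)$ is a priori $L^p$-valued only for almost every $t$, and the inequality~(\ref{second_inequality}) itself is the output of the truncation argument described after its derivation. The safest implementation is to work first on the smooth approximants $\{v_n\}$ constructed in the proof of Theorem~\ref{thm:ex3_existence}: each $v_n$ is regular enough for the Gronwall bound on $y_n(t) = \|(|v_n(t)| - M)_+\|_{L^p(\Omega)}^p$ to be genuinely pointwise in $t$, and the constants $\alpha,\beta$ are independent of $n$ because each $j_n$ inherits $H(j)$ with uniform constants. Since $v_n(t) \to v(t)$ weakly in $H$ for every $t$ (by the convergence argument inside the existence proof) while $\{v_n(t)\}$ stays bounded in $L^p(\Omega)$, a subsequence converges weakly in $L^p(\Omega)$ to $v(t)$. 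Hence $v(t) \in L^p(\Omega)$ and $\|v(t)\|_{L^p(\Omega)} \leq \liminf_n \|v_n(t)\|_{L^p(\Omega)}$ by the lower semicontinuity of the norm, which transfers the pointwise bound to $v$ and completes the argument.
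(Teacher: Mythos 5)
Your first two paragraphs are essentially the paper's proof: drop the nonnegative $L^{2p-2}$ term in (\ref{second_inequality}), apply the integrating factor $e^{\frac{dp}{4}M^{p-2}t}$ (your Gronwall constant $\beta/\alpha=\frac{4\|F\|_H^2}{d^2M^{p-2}}$ matches the paper's), and then pass from $\|(|v(t)|-M)_+\|_{L^p(\Omega)}$ to $\|v(t)\|_{L^p(\Omega)}$ via the elementary splitting $|v|^p\leq 2^{p-1}\bigl((|v|-M)_+^p+M^p\bigr)$; this is exactly how the paper concludes. Your final paragraph, however, proposes the wrong repair for the rigor issue: running the estimate on the approximants $\{v_n\}$ from Theorem \ref{thm:ex3_existence} and passing to the limit only controls those solutions that arise as such limits, whereas $G(t,v_0)$ is built from \emph{all} solutions of Problem $(\mathcal{P}_2)$, and (as the introduction stresses for problems without uniqueness) it is unknown whether every solution is a limit of approximants. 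The paper's route is to make (\ref{second_inequality}) itself rigorous for an arbitrary solution, via the $\theta_N$-truncated test functions described after its derivation, after which your Gronwall argument applies directly to every $v\in G(t,v_0)$.
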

  \begin{proof}
  We omit the term with $L^{2p-2}$ norm in left-hand side of (\ref{second_inequality}) and multiply it by $e^{\frac{dp}{4}M^{p-2}t}$. We obtain for a.e. $s\in \br^+$
  $$
  \frac{d}{ds}(\|(|v(s)|-M)_+\|_{L^p(\Omega)}^p e^{\frac{dp}{4}M^{p-2}s})\leq \frac{p}{d}\|F\|_H^2e^{\frac{dp}{4}M^{p-2}s}.
  $$
  After integration from $0$ to $t$ this gives us
  \begin{eqnarray}
  &&  \|(|v(t)|-M)_+\|_{L^p(\Omega)}^p \leq e^{-\frac{dp}{4}M^{p-2}t}  \|(|v_0|-M)_+\|_{L^p(\Omega)}^p +  \frac{4}{M^{p-2}d^2}\|F\|_H^2\leq\nonumber\\
  && \leq e^{-\frac{dp}{4}M^{p-2}t}  \|v_0\|_{L^p(\Omega)}^p +  \frac{4}{M^{p-2}d^2}\|F\|_H^2.\label{eq:gronwall_1}
  \end{eqnarray}
  Now assume that $v\in G(t,v_0)$ for any $t\geq 0$ and $v_0\in L^p(\Omega)$. We have
\begin{align*}
 & \|v\|^p_{L^p(\Omega)}=\int_{\{|v|\leq M\}}|v(x)|^p\ dx+\int_{\{|v|> M\}}|v(x)|^p\, dx\leq\\
&\leq M^pm(\Omega)+\int_{\{|v|> M\}}(|v(x)|-M+M)^p\, dx\leq \\
&\leq M^pm(\Omega) + 2^{p-1}M^pm(\Omega)+2^{p-1}\int_{\Omega}(|v(x)|-M)_+^p\, dx.
 \end{align*}
  Using (\ref{eq:gronwall_1}) we obtain
  \begin{equation}\label{eq:bound_lp}
  \|v\|^p_{L^p(\Omega)}\leq M^pm(\Omega) (1+2^{p-1})+\frac{2^{p+1}}{M^{p-2}d^2}\|F\|_H^2+
  2^{p-1}e^{-\frac{dp}{4}M^{p-2}t}  \|v_0\|_{L^p(\Omega)}^p.
  \end{equation}
  The assertion follows directly from (\ref{eq:bound_lp}).
  \end{proof}

  \begin{lemma}\label{lemma:ex3_NW_H}
  The multivalued semiflow $G:\br^+\times H\to P(H)$ satisfies the
  condition ($NW$).
  \end{lemma}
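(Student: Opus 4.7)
The plan is to essentially re-run the existence argument from Theorem 3.3 (Theorem \ref{thm:ex3_existence}) but starting from a sequence of genuine solutions $v_n$ with initial data $v_n^0\to v_0$ strongly in $H$, rather than from Yosida/mollifier approximations. Fix $t>0$ and write $\Omega_t=(0,t)\times\Omega$.

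First I would extract a priori bounds on $v_n$ on the interval $(0,t)$ that are uniform in $n$. Since $\{v_n^0\}$ is bounded in $H$, Lemma \ref{eqn:ex3_bdd_H} gives a uniform $L^\infty(0,t;H)$ bound. Testing (\ref{ex_3_a_e}) with $v_n$ on $(0,t)$ and using $H(j)(ii)$ yields uniform bounds on $v_n$ in $\C{V}(0,t)=L^2(0,t;V)\cap L^p(0,t;L^p(\Omega))$. The growth condition $H(j)(i)$ then gives $\xi_n$ bounded in $L^q(0,t;L^q(\Omega))$ with $\xi_n\in S^q_{\partial j(v_n)}$, and hence $v_n'=F-Av_n-\xi_n$ is bounded in $\C{V}^*(0,t)$. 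Therefore $\{v_n\}$ is bounded in $\C{W}(0,t)$, and by passing to a subsequence
\begin{align*}
v_n\to v &\ \ \text{weakly in}\ \C{W}(0,t),\\
\xi_n\to \xi &\ \ \text{weakly in}\ L^q(0,t;L^q(\Omega)).
\end{align*}

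Next I would identify the limit. The continuous embedding $\C{W}(0,t)\hookrightarrow C([0,t];H)$ is weakly continuous in the sense that $v_n(s)\to v(s)$ weakly in $H$ for every $s\in[0,t]$; in particular $v(0)=v_0$ (using $v_n^0\to v_0$ strongly) and $v_n(t)\to v(t)$ weakly in $H$. Passing to weak limits in (\ref{ex_3_a_e}) (using weak continuity of $A$) produces $v'+Av+\xi=F$ in $\C{V}^*(0,t)$. The only nontrivial point is to verify $\xi(s)\in S^q_{\partial j(v(s))}$ for a.e.\ $s$. The compact embedding $\C{W}(0,t)\hookrightarrow L^2(0,t;H)$ yields strong convergence $v_n\to v$ in $L^2(\Omega_t)$ and, along a further subsequence, $v_n(s,x)\to v(s,x)$ a.e.\ in $\Omega_t$. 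I would then reproduce the Egorov/Fatou argument used in Theorem \ref{thm:ex3_existence}: on each set $\Omega_t\setminus N_\delta$ where $v_n\to v$ uniformly, combine the weak convergence of $\xi_n$ with the upper semicontinuity of Clarke's directional derivative $(s,w)\mapsto j^0(s;w)$ to get, for every test $w\in L^\infty(\Omega_t\setminus N_\delta)$,
\begin{equation*}
\int_{\Omega_t\setminus N_\delta}\xi\, w\,dx\,ds\leq \int_{\Omega_t\setminus N_\delta}j^0(v;w)\,dx\,ds,
\end{equation*}
and then let $\delta\to 0$. This gives $\xi\in\partial j(v)$ a.e., so $v$ solves Problem $(\C{P}_2)$ on $(0,t)$ with $v(0)=v_0$, i.e.\ $v(t)\in G(t,v_0)$, while $v_n(t)\to v(t)$ weakly in $H$.

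The main obstacle is the last step: in contrast with Lemma \ref{lemma:nw_1}, where strong convergence of the trace in $L^2$ came for free from compactness of $\gamma$, here the multivalued term acts in the bulk and there is no gain in spatial regularity. The only available strong convergence is the a.e.\ convergence coming from $\C{W}(0,t)\hookrightarrow L^2(0,t;H)$, so the passage to the limit in $\xi_n$ must be handled with the Fatou/Egorov scheme rather than with a direct strong--weak upper semicontinuity result on $S^q_{\partial j(\cdot)}$. Once that identification is in place, the rest is routine and establishes $(NW)$.
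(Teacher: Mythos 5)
Your proposal is correct and follows essentially the same route as the paper: uniform bounds from (\ref{first_inequality}) and $H(j)(i)$ give weak convergence of a subsequence in $\C{W}(0,t)$, the multivalued term is identified by the same Egorov/Fatou scheme as in the proof of Theorem \ref{thm:ex3_existence}, and the continuous embedding $\C{W}(0,t)\hookrightarrow C([0,t];H)$ yields $v_n(t)\to v(t)$ weakly in $H$. The paper's proof is just a terser version that delegates the limit passage in $\xi_n$ to the existence theorem; your explicit remark that the bulk (non-trace) multivalued term forces the a.e.-convergence/Fatou argument rather than the compact-trace argument of Lemma \ref{lemma:nw_1} is exactly the point the paper implicitly relies on.
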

  \begin{proof}
  Assume that $\{v_n\}_{n=1}^\infty$ solve Problem ($\mathcal{P}_2$) with the initial conditions $v_n^0$ where $v_n^0\to v_0$ strongly in $H$. Fix $t>0$. From (\ref{first_inequality}) it follows that $v_n$ is bounded in $\C{V}(0,t)$. Moreover,
  analogously to the argument in the proof of Theorem \ref{thm:ex3_existence} from
  growth condition $H(j)(i)$ and the bound on $v_n$ in $\C{V}(0,t)$ it follows that
  $v_n'$ is bounded in $\C{V}^*(0,t)$. Hence, by reflexivity of these spaces
  we can extract a subsequence, denoted by the same index $n$, such that $v_n\to v$ weakly in $\C{W}(0,t)$ for some $v\in \C{W}(0,t)$.
  Using the same argument as in the proof of Theorem \ref{thm:ex3_existence} we can pass
  to the limit in (\ref{eq:example_33}) written for $v_n$ and find that $v$ solves Problem ($\mathcal{P}_2$) with the initial condition $v_0$. Moreover, since $\C{W}(0,t)$ embeds continuously in $C([0,t];H)$, we have that $v_n\to v$ weakly in $C([0,t];H)$ and moreover $v_n(t)\to v(t)$ weakly in $H$.
  \end{proof}

  \begin{lemma}\label{lemma:ex3_NW_p}
  The multivalued semiflow $G:\br^+\times L^p(\Omega)\to P(L^p(\Omega))$ satisfies the
  condition ($NW$).
  \end{lemma}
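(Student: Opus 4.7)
The plan is to bootstrap from the already-established $H$-version of $(NW)$ (Lemma \ref{lemma:ex3_NW_H}) and the uniform $L^p$-bound contained in the proof of Lemma \ref{lemma:ex3_absorbing_p}. Concretely, suppose $v_n^0 \to v_0$ strongly in $L^p(\Omega)$ and $w_n \in G(t,v_n^0)$, where $w_n = v_n(t)$ for solutions $v_n$ of Problem $(\mathcal{P}_2)$ with initial conditions $v_n^0$. The first step is to observe that since $\Omega$ is bounded and $p>2$, the inclusion $L^p(\Omega)\hookrightarrow H$ is continuous, so $v_n^0 \to v_0$ strongly in $H$ as well; Lemma \ref{lemma:ex3_NW_H} then produces a subsequence (still indexed by $n$) with $v_n(t)\to v(t)$ weakly in $H$, where $v$ is a solution of Problem $(\mathcal{P}_2)$ with initial value $v_0$, and in particular $v(t)\in G(t,v_0)$.

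The second step is to upgrade this convergence to the weak topology of $L^p(\Omega)$. Since $\{v_n^0\}$ is convergent, hence bounded, in $L^p(\Omega)$, estimate (\ref{eq:bound_lp}) from the proof of Lemma \ref{lemma:ex3_absorbing_p} supplies a uniform bound on $\{v_n(t)\}$ in $L^p(\Omega)$ for each fixed $t\geq 0$. By reflexivity of $L^p(\Omega)$, a further subsequence converges weakly in $L^p(\Omega)$ to some $\tilde v\in L^p(\Omega)$.

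The third step is identification of the weak limit. The embedding $L^p(\Omega)\hookrightarrow H$ is continuous and therefore weakly continuous, so the same subsequence converges weakly to $\tilde v$ in $H$; by uniqueness of weak $H$-limits, $\tilde v=v(t)$. Thus $v(t)\in L^p(\Omega)\cap G(t,v_0)$ and $w_n = v_n(t)\to v(t)$ weakly in $L^p(\Omega)$ along a subsequence, which is exactly condition $(NW)$ for $G:\br^+\times L^p(\Omega)\to P(L^p(\Omega))$.

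The proof is therefore almost entirely a matter of assembling existing pieces. There is no real analytic obstacle: the delicate passage to the limit in the multivalued term $\partial j(v_n)$ has already been handled inside Lemma \ref{lemma:ex3_NW_H} (via the proof of Theorem \ref{thm:ex3_existence}), and the only new ingredient needed — uniform boundedness in $L^p$ of the sequence $\{v_n(t)\}$ — is an immediate consequence of (\ref{eq:bound_lp}) applied to the bounded set $\{v_n^0\}\subset L^p(\Omega)$. The mild conceptual point to verify is simply the compatibility of the two weak topologies, which is guaranteed by the continuous embedding $L^p(\Omega)\hookrightarrow H$.
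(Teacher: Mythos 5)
Your proof is correct and follows essentially the same route as the paper's: invoke the $H$-version of $(NW)$ from Lemma \ref{lemma:ex3_NW_H}, use the uniform $L^p(\Omega)$ bound (\ref{eq:bound_lp}) and reflexivity to extract a weakly convergent subsequence in $L^p(\Omega)$, and identify the limit with $v(t)$ via the continuous embedding $L^p(\Omega)\hookrightarrow H$ and uniqueness of weak limits. No discrepancies worth noting.
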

  \begin{proof}
Let $t\geq 0$.  Assume that $v_{n}^0\to v_0$ strongly in $L^p(\Omega)$ and $v_n$ solve Problem ($\mathcal{P}_2$) with initial conditions $v_n^0$. Since, by Lemma \ref{lemma:ex3_NW_H}, $G$ satisfies the condition ($NW$) in $H$, we have, for a subsequence $v_n(t)\to v(t)$
  weakly in $H$, where $v$ solves ($\mathcal{P}_2$) with the initial condition $v_0$.
  Since, by (\ref{eq:bound_lp}), $v_n(t)$ is bounded in $L^p(\Omega)$, then, for another subsequence, it follows that $v_n(t)\to w$ weakly in $L^p(\Omega)$ for some
  $w\in L^p(\Omega)$. Hence it must be that $v_n(t)\to w$ weakly in $H$, and, from the uniqueness of the limit we have that $w=v(t)$, which completes the proof.
  \end{proof}

   The following lemma is an analogue of Lemma 3.3 in \cite{lukaszewicz-2010}.

  \begin{lemma}\label{lemma:epsilon}
  If $v$ solves Problem ($\mathcal{P}_2$) with $v_0\in B$, where $B\subset L^2(\Omega)$ is bounded, then for every $\varepsilon>0$
  there exists the large enough constant $M$ dependent on $\varepsilon$, $B$ and the problem data such that for all $t\geq 2$ we have
  $$
  \|(|v(t)|-M)_+\|_{L^p(\Omega)}^p\leq \varepsilon.
  $$
  \end{lemma}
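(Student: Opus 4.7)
The strategy is to apply the Translated Gronwall Lemma \ref{lemma:gronwall_lukaszewicz} to the differential inequality (\ref{second_inequality}), treating the right-hand side as a constant source, and then to absorb the resulting integral term via the $L^p$--time averages that drop out of (\ref{first_inequality}) once the orbit has been uniformly absorbed in $H$ by Lemma \ref{eqn:ex3_bdd_H}.

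First I would fix any $M\geq(-2c/d)^{1/p}$, so that the inequality (\ref{second_inequality}) is at our disposal. Dropping the nonnegative $L^{2p-2}$ term on the left and setting
$$y(s):=\|(|v(s)|-M)_+\|_{L^p(\Omega)}^p,\qquad \lambda:=\frac{dp}{4}M^{p-2},\qquad h:=\frac{p}{d}\|F\|_H^2,$$
we obtain $y'(s)+\lambda y(s)\leq h$ for almost every $s\geq 0$. Since $v\in\C{V}_{loc}(\br^+)\subset L^p_{loc}(\br^+;L^p(\Omega))$ and $0\leq y(s)\leq\|v(s)\|_{L^p(\Omega)}^p$, the function $y$ is locally integrable on $\br^+$, so the constant-$h$ form of Lemma \ref{lemma:gronwall_lukaszewicz} applies and yields
$$y(t+2)\leq e^{-\lambda}\int_t^{t+1}y(s)\,ds+\frac{h}{\lambda}=e^{-\lambda}\int_t^{t+1}y(s)\,ds+\frac{4\|F\|_H^2}{d^2M^{p-2}}$$
for every $t\geq 0$.

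Next I would bound the integral uniformly in $t\geq 0$ and in $v_0\in B$. Integrating (\ref{first_inequality}) over $[t,t+1]$ gives
$$2d\int_t^{t+1}\|v(s)\|_{L^p(\Omega)}^p\,ds\leq\|v(t)\|_H^2+\frac{\|F\|_H^2}{\lambda_1}-2cm(\Omega),$$
and by Lemma \ref{eqn:ex3_bdd_H} the right-hand side is bounded, uniformly in $t\geq 0$ and $v_0\in B$, by a constant $C_B>0$ depending only on $B$ and the problem data. Using $y(s)\leq\|v(s)\|_{L^p(\Omega)}^p$, I then obtain
$$\|(|v(t+2)|-M)_+\|_{L^p(\Omega)}^p\leq C_B\, e^{-\frac{dp}{4}M^{p-2}}+\frac{4\|F\|_H^2}{d^2M^{p-2}}\qquad\text{for all }t\geq 0.$$

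Both terms on the right tend to $0$ as $M\to\infty$, so given $\varepsilon>0$ I choose $M\geq(-2c/d)^{1/p}$ large enough that their sum does not exceed $\varepsilon$; this gives the claim for every $s\geq 2$. I do not anticipate a serious obstacle: the argument is essentially the one of Lemma 3.3 in \cite{lukaszewicz-2010} transplanted to the present multivalued setting, and the multivalued character only shows up through the pointwise bound $\xi s\geq c+d|s|^p$ already exploited in deriving (\ref{second_inequality}). The one mild subtlety is that $v_0$ is only taken in $L^2(\Omega)$, so $y(0)$ could be infinite; however, the translated Gronwall lemma requires only the local integrability of $y$, which is guaranteed by $v\in\C{V}_{loc}(\br^+)$, so nothing extra is needed.
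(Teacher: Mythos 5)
Your proposal is correct and follows essentially the same route as the paper: apply the Translated Gronwall Lemma \ref{lemma:gronwall_lukaszewicz} to (\ref{second_inequality}) after dropping the $L^{2p-2}$ term, bound the resulting time-average of $\|(|v(s)|-M)_+\|_{L^p(\Omega)}^p$ by $\int_t^{t+1}\|v(s)\|_{L^p(\Omega)}^p\,ds$ via (\ref{first_inequality}) together with Lemma \ref{eqn:ex3_bdd_H}, and then let $M\to\infty$. The only cosmetic difference is that you package the uniform bound into a single constant $C_B$ where the paper keeps the explicit expression in $R$, $F$, $c$, $\lambda_1$.
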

  \begin{proof}
  From (\ref{second_inequality}) it follows by Lemma \ref{lemma:gronwall_lukaszewicz}
  that for all $t\geq 0$ we have
  \begin{equation}\label{eqn:gronwall_luk_applied}
  \|(|v(t+2)|-M)_+\|_{L^p(\Omega)}^p\leq e^{-\frac{dp}{4}M^{p-2}}\int_t^{t+1}\|(|v(s)|-M)_+\|_{L^p(\Omega)}^p\, ds + \frac{4\|F\|_H^2}{d^2M^{p-2}}.
  \end{equation}
  To estimate the integral in the right-hand side of (\ref{eqn:gronwall_luk_applied})
  observe that for a.e. $x\in \Omega$ and a.e. $s\in \br^+$ we have $(|v(x,s)|-M)_+^p\leq |v(x,s)|^p$ and hence
  \begin{equation}\label{eqn:ex3_estimate_trunc}
  \int_t^{t+1}\|(|v(s)|-M)_+\|_{L^p(\Omega)}^p\, ds\leq   \int_t^{t+1}\|v(s)\|_{L^p(\Omega)}^p\, ds.
  \end{equation}
  Integrating (\ref{first_inequality}) from $t$ to $t+1$ we obtain for all $t\geq 0$
  \begin{equation}\label{eqn:ex3_estimate_int}
  2d\int_t^{t+1}\|v(s)\|_{L^p(\Omega)}^p\, ds\leq \|v(t)\|_H^2-2cm(\Omega)+\frac{\|F\|_H^2}{\lambda_1}.
  \end{equation}
  Using Lemma \ref{eqn:ex3_bdd_H} we get
  $$
  2d\int_t^{t+1}\|v(s)\|_{L^p(\Omega)}^p\, ds\leq e^{-\lambda_1t}R^2+\frac{(\|F\|_H^2-2cm(\Omega)\lambda_1)(1+\lambda_1)}{\lambda_1^2}.
  $$
  Hence, and from (\ref{eqn:gronwall_luk_applied}) and (\ref{eqn:ex3_estimate_trunc})
  we obtain for all $t\geq 0$
  \begin{equation}\label{eqn:gronwall_luk_final}
  \|(|v(t+2)|-M)_+\|_{L^p(\Omega)}^p\leq e^{-\frac{dp}{4}M^{p-2}}\left(  \frac{R^2}{2d}+\frac{(\|F\|_H^2-2cm(\Omega)\lambda_1)(1+\lambda_1)}{2d\lambda_1^2} \right) + \frac{4\|F\|_H^2}{d^2M^{p-2}}.
  \end{equation}
  The proof is finished. Note that $M$ depends on $\varepsilon, d, c, p, R, m(\Omega), \|F\|_H, \lambda_1$.
  \end{proof}

  \begin{lemma}\label{lemma:ex3_flattening_H}
  The multivalued semiflow $G:\br\times H\to P(H)$ satisfies the flattening condition in $H$.
  \end{lemma}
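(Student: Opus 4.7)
The plan is to adapt the strategy used in Lemma \ref{lemma:flattening_1}: decompose $v = v_1 + v_2$ where $v_1 = P_m v$ with $P_m$ the $H$-orthogonal projection onto the span of the first $m$ eigenfunctions of $A$, then apply the Translated Gronwall Lemma \ref{lemma:gronwall_lukaszewicz} to a differential inequality for $\|v_2\|_H^2$. The new difficulty compared with Lemma \ref{lemma:flattening_1} is that the multivalued term $\xi$ is only $L^q$-integrable in space with $q<2$, so Cauchy--Schwarz in $H$ is not directly available; the $L^{2p-2}$ dissipation kept in (\ref{second_inequality}) will furnish the missing $L^2_tL^2_x$ control on $\xi$.

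First I would collect the uniform bounds. For $B\subset H$ bounded, Lemma \ref{eqn:ex3_bdd_H} yields $R_H$ and $t_1(B)\geq 2$ with $\|v(t)\|_H\leq R_H$ for all $t\geq t_1(B)$ and $v(0)\in B$, while Lemma \ref{lemma:epsilon} (applied with a fixed $\varepsilon_0>0$) provides $M=M(B)$ with $\|(|v(t)|-M)_+\|_{L^p(\Omega)}^p\leq \varepsilon_0$ for $t\geq t_1(B)$. Integrating (\ref{second_inequality}) over $[t,t+2]$ and discarding the two nonnegative terms on the left yields
\[
\int_t^{t+2}\|(|v(s)|-M)_+\|_{L^{2p-2}(\Omega)}^{2p-2}\, ds \leq C_0
\]
uniformly for $t\geq t_1(B)$. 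Plugging $|v|\leq M+(|v|-M)_+$ into $H(j)(i)$, squaring, and using $(a+b)^{2(p-1)}\leq 2^{2p-3}(a^{2p-2}+b^{2p-2})$ gives $\|\xi(s)\|_H^2\leq K_1+K_2\|(|v(s)|-M)_+\|_{L^{2p-2}(\Omega)}^{2p-2}$, so after integration
\[
\int_t^{t+2}\|\xi(s)\|_H^2\, ds \leq C_1 \qquad \forall\, t\geq t_1(B),
\]
with $C_1$ independent of $m$.

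Let $\{w_k\}_{k=1}^\infty$ be the $H$-orthonormal eigenbasis of $A$ with eigenvalues $0<\lambda_1\leq\lambda_2\leq\ldots\to\infty$, and decompose $v(t)=v_1(t)+v_2(t)$ with $v_1(t)=P_mv(t)\in H_m$ and $v_2(t)=(I-P_m)v(t)$. Both summands lie in $V\cap L^p(\Omega)$ for a.e.\ $t$ and are $H$- and $V$-orthogonal. A standard Fourier expansion argument gives $\langle v'(t),v_2(t)\rangle=\tfrac12\tfrac{d}{dt}\|v_2(t)\|_H^2$, so testing (\ref{ex_3_a_e}) with $v_2(t)$ produces
\[
\frac12\frac{d}{dt}\|v_2\|_H^2 + \|v_2\|^2 = (F,v_2)_H - \int_\Omega \xi\, v_2\,dx.
\]
On the full-measure set where $\xi(t)\in H$, Cauchy--Schwarz, Young, and the Courant--Fischer bound $\|v_2\|^2\geq\lambda_{m+1}\|v_2\|_H^2$ (used to absorb a $\lambda_{m+1}\|v_2\|_H^2$ term) yield
\[
\frac{d}{dt}\|v_2(t)\|_H^2 + \lambda_{m+1}\|v_2(t)\|_H^2 \leq \frac{2(\|F\|_H^2+\|\xi(t)\|_H^2)}{\lambda_{m+1}}.
\]

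Applying Lemma \ref{lemma:gronwall_lukaszewicz} with $\lambda=\lambda_{m+1}$ together with the bounds already obtained gives, for all $t\geq t_1(B)$,
\[
\|v_2(t+2)\|_H^2 \leq e^{-\lambda_{m+1}}\int_t^{t+1}\|v(s)\|_H^2\,ds + \int_t^{t+2}\frac{2(\|F\|_H^2+\|\xi(s)\|_H^2)}{\lambda_{m+1}}\,ds \leq e^{-\lambda_{m+1}}R_H^2 + \frac{C_2}{\lambda_{m+1}},
\]
where $C_2$ is independent of $m$. Given $\varepsilon>0$, choosing $m$ large enough so that the right-hand side is at most $\varepsilon^2$ verifies the flattening condition with $E=H_m$, $P=P_m$ and $t_0=t_1(B)+2$. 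The main obstacle is the $\int\xi v_2\,dx$ term: without an $L^2$-in-time bound on $\xi$, the Cauchy--Schwarz step in $H$ would fail. This is precisely why the $L^{2p-2}$ term was preserved in (\ref{second_inequality}), as announced in the remark immediately following that inequality.
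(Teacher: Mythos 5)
Your proof is correct, and while it shares the paper's overall skeleton (spectral projection onto the first $m$ eigenfunctions, testing (\ref{ex_3_a_e}) with the tail $v_2$, the Translated Gronwall Lemma \ref{lemma:gronwall_lukaszewicz}, and controlling $\|\xi(s)\|_H^2$ through $H(j)(i)$ and the $L^{2p-2}$ dissipation retained in (\ref{second_inequality})), it diverges at the decisive technical step in a way that genuinely simplifies the argument. The paper estimates the right-hand side $(\|F\|_H+\|\xi\|_H)\|v_2\|_H$ using only the Poincar\'e constant $\lambda_1$, obtaining $\frac{d}{dt}\|v_2\|_H^2+\lambda_{n+1}\|v_2\|_H^2\leq\frac{1}{\lambda_1}(\|F\|_H^2+\|\xi\|_H^2)$; since a crude bound $e^{\lambda_{n+1}s}\leq e^{\lambda_{n+1}(t+2)}$ would then leave an $m$-independent term $\frac{1}{\lambda_1}\int_t^{t+2}\|\xi\|_H^2\,ds$ that does not vanish as $m\to\infty$, the paper must keep the exponential weight inside the integral, split $\|\xi\|_H^2$ into a constant part and the $(|v|-M)_+^{2p-2}$ part, and run an integration by parts on the weighted integral of (\ref{second_inequality}) combined with a $\delta$-tuned application of Lemma \ref{lemma:epsilon}. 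You instead apply Courant--Fischer to $\|v_2\|_H$ on the right-hand side \emph{before} Young's inequality, which buys an extra factor $\lambda_{m+1}^{-1}$ in the source term $\frac{2(\|F\|_H^2+\|\xi\|_H^2)}{\lambda_{m+1}}$; after that, the crude unweighted bound $\int_t^{t+2}\|\xi(s)\|_H^2\,ds\leq C_1$ (which you correctly obtain by integrating (\ref{second_inequality}) and using Lemma \ref{lemma:epsilon} only to control the boundary term at time $t$) suffices, and the whole weighted-integral manipulation disappears. Both routes are valid; yours is shorter and less delicate, at the cost of a slightly less explicit final constant, and it legitimately exploits the orthogonality $v_2\in H_m^\perp$ twice rather than once. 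Your attention to the measurability point that $\xi(t)\in H$ for a.e.\ $t$ (via $v(t)\in L^{2p-2}(\Omega)$ from (\ref{second_inequality})) addresses an issue the paper leaves implicit.
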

  \begin{proof}
  The proof follows the lines of the proof of Lemma \ref{lemma:flattening_1}. We consider the sequence of eigenvalues of $A$ denoted by $0<\lambda_1\leq \lambda_2\leq \ldots$ and denote by $H_n$ the space spanned by first $n$ eigenfunctions and by $P_n:H\to H_n$ the projection operator. Then, if $v$ solves Problem ($\mathcal{P}_2$), we use the notation $v_1(t)=P_nv(t)$ and $v_2(t)=v(t)-v_1(t)$.
  Assume that $v_0\in B$, where $B$ is such that $\|w\|_H\leq R$ for $w\in B$ and $t\geq t_0\geq 2$.
  We take duality in (\ref{ex_3_a_e}) with $v_2(t)$ and obtain
  $$
  \frac{1}{2}\frac{d}{dt}\|v_2(t)\|_H^2+\|v_2(t)\|_V^2\leq \|F\|_H\|v_2(t)\|_H+\|\xi(t)\|_H\|v_2(t)\|_H.
  $$
  From Cauchy inequality and Poisson inequality we get
  $$
  \frac{d}{dt}\|v_2(t)\|_H^2+\|v_2(t)\|_V^2\leq \frac{1}{\lambda_1}(\|F\|^2_H+\|\xi(t)\|^2_H).
  $$
  Applying the Courant-Fischer formula we get
  $$
  \frac{d}{dt}\|v_2(t)\|_H^2+\lambda_{n+1}\|v_2(t)\|_H^2\leq \frac{1}{\lambda_1}(\|F\|^2_H+\|\xi(t)\|^2_H).
  $$
 From the Translated Gronwall Lemma \ref{lemma:gronwall_lukaszewicz} we get, for all $t\geq 0$,
  \begin{equation}\label{eqn:ex3_estimate_flattening}
  \|v_2(t+2)\|_H^2  \leq e^{-\lambda_{n+1}}\int_t^{t+1}\|v_2(s)\|_H^2\,ds + \frac{\|F\|_H^2}{\lambda_1\lambda_{n+1}} + \frac{e^{-\lambda_{n+1}(t+2)}}{\lambda_1}\int_t^{t+2}e^{\lambda_{n+1}s}\|\xi(s)\|^2_H\, ds.
  \end{equation}
  In order to estimate the last integral in right-hand side of above inequality we use the growth condition $H(j)(i)$. We have
  $$
  \|\xi(s)\|^2_H \leq \int_\Omega(a+b|v(x,s)|^{p-1})^2\, dx\leq
  2a^2m(\Omega) + 2b^2\int_\Omega |v(x,s)|^{2p-2}\, dx.
  $$
  Now observe that $|v(x,s)|^{2p-2}\leq 2^{2p-3}(M^{2p-2}+(|v(x,s)|-M)_+^{2p-2})$ and therefore we have
  $$
  \|\xi(s)\|^2_H \leq 2a^2m(\Omega) + b^2 2^{2p-2}\int_{\Omega}M^{2p-2}+(|v(x,s)|-M)_+^{2p-2}\, dx.
  $$
  Applying this last inequality in (\ref{eqn:ex3_estimate_flattening}) gives
  \begin{eqnarray}
&&  \|v_2(t+2)\|_H^2  \leq e^{-\lambda_{n+1}}\int_t^{t+1}\|v_2(s)\|_H^2\, ds  + \frac{\|F\|_H^2+m(\Omega)(2a^2 + b^2 2^{2p-2}M^{2p-2})}{\lambda_1\lambda_{n+1}}+\nonumber\\
&&+\frac{b^2 2^{2p-2}}{\lambda_1}e^{-\lambda_{n+1}(t+2)}\int_t^{t+2}e^{\lambda_{n+1}s}\|(|v(x,s)|-M)_+\|_{L^{2p-2}(\Omega)}^{2p-2}\, ds.\label{eqn:ex3_estimate_flattening2}
  \end{eqnarray}
  In order to estimate the last integral in (\ref{eqn:ex3_estimate_flattening2}) we multiply
  (\ref{second_inequality}) by $e^{\lambda_{n+1}t}$ and integrate from $t$ to $t+2$. We get
  \begin{align*}
  &\int_t^{t+2}e^{\lambda_{n+1}s}\frac{d}{ds}\|(|v(s)|-M)_+\|_{L^p(\Omega)}^p\, ds + \frac{dp}{8}\int_{t}^{t+2}e^{\lambda_{n+1}s}\|(|v(s)|-M)_+\|_{L^{2p-2}(\Omega)}^{2p-2}\, ds\leq \\
  &\leq \frac{p}{d}\|F\|_H^2\frac{e^{\lambda_{n+1}(t+2)}}{\lambda_{n+1}}.
  \end{align*}
  After integration by parts we get
  \begin{align*}
  & \frac{dp}{8}\int_{t}^{t+2}e^{\lambda_{n+1}s}\|(|v(s)|-M)_+\|_{L^{2p-2}(\Omega)}^{2p-2}\, ds\leq \frac{p}{d}\|F\|_H^2\frac{e^{\lambda_{n+1}(t+2)}}{\lambda_{n+1}}+\\
 & +e^{\lambda_{n+1}t}\|(|v(t)|-M)_+\|^p_{L^p(\Omega)}
   +\lambda_{n+1}\int_{t}^{t+2}e^{\lambda_{n+1}s}\|(|v(s)|-M)_+\|^p_{L^p(\Omega)}\, ds.
  \end{align*}
  By Lemma \ref{lemma:epsilon}, for any small $\delta<0$ we are able to choose large enough $M=M(\delta,R)$ such that for all $t\geq t_0$ we have $\|(|v(t)|-M)_+\|^p_{L^p(\Omega)}\leq \delta$. We have after straightforward calculation
  $$
   \frac{dp}{8}\int_{t}^{t+2}e^{\lambda_{n+1}s}\|(|v(s)|-M)_+\|_{L^{2p-2}(\Omega)}^{2p-2}\, ds\leq \frac{p}{d}\|F\|_H^2\frac{e^{\lambda_{n+1}(t+2)}}{\lambda_{n+1}}+2e^{\lambda_{n+1}(t+2)}\delta.
   $$
   Using this inequality in (\ref{eqn:ex3_estimate_flattening2}) we find
     \begin{eqnarray}
&&  \|v_2(t+2)\|_H^2  \leq e^{-\lambda_{n+1}}\int_t^{t+1}\|v_2(s)\|_H^2\, ds + \frac{\|F\|_H^2+m(\Omega)(2a^2 + b^2 2^{2p-2}M^{2p-2})}{\lambda_1\lambda_{n+1}}+\nonumber\\
&&+ \frac{b^2 2^{2p-2}}{\lambda_1}\left( \frac{8\|F\|^2_H}{d^2 \lambda_{n+1}}+\frac{16 \delta}{dp}\right).\label{eqn:ex3-bound-asymptotic}
  \end{eqnarray}
By Lemma \ref{eqn:ex3_bdd_H} we obtain for all $t\geq 0$
$$
\int_t^{t+1}\|v_2(s)\|_H^2\, ds\leq \int_t^{t+1} \|v(s)\|_H^2\, ds \leq e^{-\lambda_1t}R^2+\frac{\|F\|_H^2-2cm(\Omega)\lambda_1}{\lambda_1^2}.
$$
Using this inequality in (\ref{eqn:ex3-bound-asymptotic}) gives
     \begin{eqnarray}
&&  \|v_2(t+2)\|_H^2  \leq e^{-\lambda_{n+1}}e^{-\lambda_1t}R^2+ e^{-\lambda_{n+1}} \frac{\|F\|_H^2-2cm(\Omega)\lambda_1}{\lambda_1^2} + \frac{\|F\|_H^2+2a^2m(\Omega)}{\lambda_1\lambda_{n+1}} + \nonumber\\
&&+ \frac{b^2 2^{2p-2}M^{2p-2}m(\Omega)}{\lambda_1\lambda_{n+1}}+ \frac{b^2 2^{2p+1}\|F\|^2_H}{d^2\lambda_1\lambda_{n+1}} + \frac{b^2 2^{2p+2}\delta}{dp\lambda_1}:=I_1+I_2+I_3+I_4+I_5+I_6.\nonumber
  \end{eqnarray}
  We choose $\delta$ such that $I_6\leq \frac{\varepsilon}{6}$. This fixes (possibly large) $M$. Next we choose $n$ such that $\lambda_{n+1}$ is large enough and hence $I_i\leq \frac{\varepsilon}{6}$ for $i=2,3,4,5$. Finally we choose $t_0$ large enough so that $I_1\leq \frac{\varepsilon}{6}$ for all $t\geq t_0$ and the proof is complete.
  \end{proof}
  \begin{theorem}
  The multivalued semiflow $G:\br^+\times H\to P(H)$ has a global attractor in $H$.
  \end{theorem}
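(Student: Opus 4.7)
The plan is to invoke Corollary \ref{corollary:main} directly, since all three of its hypotheses have been verified in the preceding lemmas. First I would note that the state space $H = L^2(\Omega)$ is a Hilbert space, hence uniformly convex, so the corollary applies in the present setting.

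Next I would collect the three ingredients required by Corollary \ref{corollary:main}. The existence of a bounded absorbing set in $H$ is provided by Lemma \ref{eqn:ex3_bdd_H}: the explicit estimate
\[
\|v\|_H^2 \leq e^{-\lambda_1 t}R^2 + \frac{\|F\|_H^2 - 2cm(\Omega)\lambda_1}{\lambda_1^2}
\]
shows that any ball of radius strictly larger than $\sqrt{(\|F\|_H^2-2cm(\Omega)\lambda_1)/\lambda_1^2}$ is absorbing. The condition ($NW$) for $G$ on $H$ is exactly the content of Lemma \ref{lemma:ex3_NW_H}, and the flattening condition is exactly the content of Lemma \ref{lemma:ex3_flattening_H}.

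Putting these three facts together, all hypotheses of Corollary \ref{corollary:main} are satisfied for the $m$-semiflow $G:\br^+\times H\to P(H)$, and the conclusion—existence of a global attractor in $H$—follows immediately.

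There is no real obstacle here: the entire technical work has been done in the preceding lemmas, and the theorem is simply the pay-off of the abstract framework developed in Section \ref{sec:theory} combined with the a~priori estimates for Problem $(\mathcal{P}_2)$. The only thing worth highlighting is that this approach bypasses the need to verify strong-strong upper semicontinuity of $G$ or to analyze the monotonicity of any auxiliary energy function, which is precisely the advantage promised in the introduction.
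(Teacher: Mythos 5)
Your proposal is correct and follows exactly the paper's own proof: the theorem is obtained by combining Lemma \ref{eqn:ex3_bdd_H} (bounded absorbing set), Lemma \ref{lemma:ex3_NW_H} (condition ($NW$)), and Lemma \ref{lemma:ex3_flattening_H} (flattening condition) with Corollary \ref{corollary:main}. Your additional remark that $H=L^2(\Omega)$ is uniformly convex, so the corollary applies, is a worthwhile detail the paper leaves implicit.
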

  \begin{proof}
  The theorem follows from Lemmata \ref{eqn:ex3_bdd_H}, \ref{lemma:ex3_NW_H}, \ref{lemma:ex3_flattening_H} and Corollary \ref{corollary:main}.
  \end{proof}

  We formulate the following lemma which is a corollary of Lemma 5.3 in \cite{Zhong-Yang-Sun} (compare Corollary 5.4 in \cite{Zhong-Yang-Sun} and Lemma 2.1 in \cite{lukaszewicz-2010}).
  \begin{lemma}\label{lemma:ex3_omega_compact}
  Let $G$ be a multivalued semiflow in $H$ and $L^p(\Omega)$ ($p\geq 2$) such that
  \begin{itemize}
  \item[$(i)$] $G:\br^+\times H\to P(H)$ is $\omega$-limit compact in $H$,
  \item[$(ii)$] for any $\overline{\varepsilon}>0$ and any bounded subset $B\subset L^p(\Omega)$ there exists a positive constant $\bar{M}=\bar{M}(\overline{\varepsilon},B)$ and $T=T(\overline{\varepsilon},B)$ such that
      for all $v_0\in B$, $t\geq T$ and for all $v\in G(t,v_0)$ we have
      $$
      \int_{\{|v|\geq \overline{M}\}} |v(x)|^p \, dx \leq \overline{\varepsilon}.
      $$
  \end{itemize}
  Then $G:\br^+\times L^p(\Omega)\to P(L^p(\Omega))$ is $\omega$-limit compact in $L^p(\Omega)$.
  \end{lemma}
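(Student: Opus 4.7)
The plan is to reduce $\omega$-limit compactness in $L^p(\Omega)$ to asymptotic compactness (this is legitimate by Lemmata~\ref{lemma:limit-asymptotic} and~\ref{lemma:asymptoimpliesomega} since $L^p(\Omega)$ is a complete metric space) and then to derive $L^p$-convergence of a subsequence from the $H$-convergence guaranteed by (i) together with the uniform tail estimate~(ii), via Vitali's convergence theorem.

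Concretely, I would pick a bounded set $B\subset L^p(\Omega)$, a sequence $t_n\to\infty$, and $\xi_n\in G(t_n,B)$. Since $\Omega$ is bounded, $L^p(\Omega)\hookrightarrow L^2(\Omega)=H$, so $B$ is $H$-bounded. Hypothesis~(i) and Lemma~\ref{lemma:limit-asymptotic} then furnish a subsequence (not relabelled) with $\xi_n\to\xi$ strongly in $H$; extracting once more, $\xi_n(x)\to\xi(x)$ for a.e.\ $x\in\Omega$. What remains is to upgrade this pointwise convergence to $L^p$-convergence.

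For this I would apply Vitali's theorem: on the finite-measure space $\Omega$, a.e.\ convergence implies convergence in measure, so it suffices to prove equi-integrability of $\{|\xi_n|^p\}$. Given $\eta>0$, use~(ii) with $\overline\varepsilon=\eta/2$ to produce $\overline M$ and $T$ with
$$
\int_{\{|\xi_n|\geq\overline M\}}|\xi_n(x)|^p\,dx\leq \eta/2\qquad\text{whenever }t_n\geq T.
$$
Then, for any measurable $E\subset\Omega$ and $t_n\geq T$,
$$
\int_E|\xi_n(x)|^p\,dx\leq \overline M^p\,m(E)+\int_{\{|\xi_n|\geq\overline M\}}|\xi_n(x)|^p\,dx\leq \overline M^p\,m(E)+\frac{\eta}{2},
$$
so $m(E)<\eta/(2\overline M^p)$ forces the left-hand side to be at most $\eta$; the finitely many indices with $t_n<T$ are handled by standard equi-integrability of a finite collection of $L^p$ functions (possibly enlarging $\overline M$). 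Vitali's theorem then delivers $\xi_n\to\xi$ in $L^p(\Omega)$, establishing asymptotic compactness of $G$ in $L^p(\Omega)$ and thus the desired $\omega$-limit compactness.

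The proof is structurally straightforward; there is no deep obstacle. The only point requiring care is the identification of~(ii) as the tail-decay half of uniform integrability, and the derivation of the equi-absolute-continuity half by the elementary split above. The fact that the limit $\xi$ automatically lies in $L^p(\Omega)$ follows either from $L^p$-convergence itself or from Fatou's lemma applied to the $L^p$-bounded tail subsequence obtained by combining~(ii) with the pointwise bound $|\xi_n|\leq\overline M$ off the tail set.
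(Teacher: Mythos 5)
Your proof is correct. Note, however, that the paper does not actually prove this lemma: it is stated as a corollary of Lemma 5.3 in the cited work of Zhong, Yang and Sun, whose argument runs through the Kuratowski measure of noncompactness --- one truncates at level $\overline{M}$, uses (ii) to make the truncated tails uniformly small in $L^p$, and converts a finite $\varepsilon$-cover of $\bigcup_{t\geq\tau}G(t,B)$ in $H$ into a finite cover in $L^p(\Omega)$ via an interpolation-type inequality of the form $\|u-v\|^p_{L^p}\leq C(\overline{M})\,\|u-v\|^2_{H}$ valid on the set where both functions are bounded by $\overline{M}$. Your route is genuinely different and arguably cleaner: you exploit the equivalence between $\omega$-limit compactness and asymptotic compactness (Lemmata \ref{lemma:limit-asymptotic} and \ref{lemma:asymptoimpliesomega}, legitimately applicable since $L^p(\Omega)$ is a complete metric space and those lemmata require no continuity of $G$), extract an $H$-convergent and hence a.e.\ convergent subsequence, and identify (ii) as the tail half of the uniform integrability of $\{|\xi_n|^p\}$, the equi-absolute-continuity half following from the elementary split $\int_E|\xi_n|^p\leq \overline{M}^p m(E)+\overline{\varepsilon}$; Vitali's theorem then upgrades a.e.\ convergence to $L^p$-convergence. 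The finitely many indices with $t_n<T$ and the membership $\xi\in L^p(\Omega)$ are both handled correctly. What the covering argument buys is a quantitative bound on $\kappa_{L^p}$ directly in terms of $\kappa_{H}$; what your sequential argument buys is self-containedness within the framework of Section 2 of the paper and independence from any external lemma.
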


  \begin{theorem}
  The multivalued semiflow $G:\br^+\times L^p(\Omega)\to P(L^p(\Omega))$ has a global attractor in $L^p(\Omega)$.
  \end{theorem}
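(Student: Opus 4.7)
The plan is to verify the three hypotheses of Theorem \ref{thm-main1} for the $m$-semiflow $G$ acting on the Banach space $L^p(\Omega)$: closedness, existence of a bounded absorbing set, and $\omega$-limit compactness. The first two are already at hand in the preceding lemmata, so the only real work is in checking $\omega$-limit compactness in $L^p(\Omega)$, and for that I would invoke the ``bootstrap'' Lemma \ref{lemma:ex3_omega_compact}, which upgrades $\omega$-limit compactness in $H$ to $\omega$-limit compactness in $L^p(\Omega)$ at the cost of a uniform tail estimate.

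More precisely, Lemma \ref{lemma:ex3_NW_p} combined with Lemma \ref{lemma-closed} yields that $G$ is demiclosed on $L^p(\Omega)$, hence closed in the strong topology. Lemma \ref{lemma:ex3_absorbing_p} provides the bounded absorbing set in $L^p(\Omega)$. It remains to establish $\omega$-limit compactness of $G$ in $L^p(\Omega)$, and I would do so by verifying the two hypotheses of Lemma \ref{lemma:ex3_omega_compact}. Hypothesis (i), namely $\omega$-limit compactness of $G$ on $H$, follows by combining the flattening condition in $H$ proved in Lemma \ref{lemma:ex3_flattening_H} with the general implication of Lemma \ref{thm-flat-limit}.

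For hypothesis (ii) I would derive the required uniform tail estimate from Lemma \ref{lemma:epsilon}. Given a bounded set $B\subset L^p(\Omega)$, the continuous embedding $L^p(\Omega)\hookrightarrow L^2(\Omega)$ (valid since $\Omega$ is bounded and $p>2$) makes $B$ bounded in $H$, so Lemma \ref{lemma:epsilon} applies and for any prescribed tolerance we can find $M$ large so that $\|(|v(t)|-M)_+\|^p_{L^p(\Omega)}\leq \overline{\varepsilon}/2^p$ for all $t\geq 2$ and all $v\in G(t,v_0)$ with $v_0\in B$. Combining this with the elementary pointwise inequality $|s|\leq 2(|s|-M)_+$ valid on $\{|s|\geq 2M\}$ yields
\begin{equation*}
\int_{\{|v|\geq 2M\}}|v(x)|^p\,dx \;\leq\; 2^p\int_\Omega (|v(x)|-M)_+^p\,dx \;\leq\; \overline{\varepsilon},
\end{equation*}
so the choice $\overline{M}=2M$ and $T=2$ verifies hypothesis (ii) of Lemma \ref{lemma:ex3_omega_compact}.

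Since all three hypotheses of Theorem \ref{thm-main1} are met, the existence of a global attractor for $G$ in $L^p(\Omega)$ follows. The only delicate point of the argument is making sure that the $L^p$-tail information from Lemma \ref{lemma:epsilon} is strong enough to feed Lemma \ref{lemma:ex3_omega_compact}; once the pointwise truncation inequality is invoked, the conversion is routine. No new analytic estimates beyond those already established in the preceding lemmata are required.
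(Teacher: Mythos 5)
Your proposal is correct and follows essentially the same route as the paper: bounded absorbing set and condition ($NW$) in $L^p(\Omega)$ from Lemmata \ref{lemma:ex3_absorbing_p} and \ref{lemma:ex3_NW_p}, $\omega$-limit compactness in $H$ via the flattening condition, and the upgrade to $\omega$-limit compactness in $L^p(\Omega)$ through Lemma \ref{lemma:ex3_omega_compact} using the uniform tail estimate of Lemma \ref{lemma:epsilon}, concluding with Theorem \ref{thm-main1}. The only cosmetic difference is your direct use of the pointwise inequality $|s|\leq 2(|s|-M)_+$ on $\{|s|\geq 2M\}$, where the paper expands $(|v|-M+M)^p$ and bounds $M^p$ by $(|v|-M)_+^p$; both give the same factor $2^p$.
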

  \begin{proof}
  From Lemmata \ref{lemma:ex3_absorbing_p} and \ref{lemma:ex3_NW_p} it follows $G$ satisfies condition ($NW$) and has a bounded absorbing set in $L^p(\Omega)$. From Lemma \ref{lemma:ex3_flattening_H} it follows that $G$ satisfies the flattening condition in $H$ and, by Lemma \ref{thm-flat-limit}, we deduce that $G$ is $\omega$-limit compact in $H$. To obtain $\omega$-limit compactness in $L^p(\Omega)$ we will use Lemma \ref{lemma:ex3_omega_compact}. To this end let $t\geq 2$ and $v_0\in B$, where $B$ is bounded in $L^p(\Omega)$.  We take $\varepsilon$ and $M$ as in Lemma \ref{lemma:epsilon}.
  We have for $v\in G(t,v_0)$
  \begin{align*}
  &\int_{\{|v|\geq 2M\}}|v(x)|^p\, dx\leq   \int_{\{|v|\geq 2M\}}(|v(x)|-M+M)^p\, dx\leq\\
 & \leq 2^{p-1}\left(\int_{\{|v|\geq 2M\}}(|v(x)|-M)^p\, dx+\int_{\{|v|\geq 2M\}}M^p\, dx\right)\leq\\
  &\leq 2^{p-1}2\int_{\{|v|\geq M\}}(|v(x)|-M)_+^p\, dx\leq 2^p\varepsilon,
  \end{align*}
  as $M\leq |v|-M$ for $|v|\geq 2M$. This proves that $(ii)$ of Lemma \ref{lemma:ex3_omega_compact} holds. Hence $G$ is $\omega$-limit compact in $L^p(\Omega)$ and from
  Theorem \ref{thm-main1} it follows that $G$ has an attractor in $L^p(\Omega)$.
  \end{proof}

\section*{Acknowledgments} The authors would like to thank Nicolaos S. Papageorgiou for his several remarks during the preparation of this paper and also the unknown referee whose relevant comments allowed to improve the paper in some important points.
\addcontentsline{toc}{chapter}{Bibliographie}

\end{document}